\title{}
\newcounter{thm}
\newtheorem{theorem}[thm]{Theorem}
\newtheorem{definition}{Definition}
\newtheorem{prop}{Proposition}
\newtheorem{cor}{Corollary}
\newtheorem{lemma}{Lemma}
\newcounter{example}
\newenvironment{example}[1][]{\refstepcounter{example}\par\medskip
	\noindent \textbf{Example~\theexample. #1} \rmfamily}{\medskip}
\newcounter{remark}
\newenvironment{remark}[1][]{\refstepcounter{remark}\par\medskip
	\noindent \textbf{Remark~\theremark. #1} \rmfamily}{\medskip}
\def \Frob{\text{Frob}}
\def \ord{\text{ord}}
\def\ff{\mathfrak f}
\def \fh{\mathfrak h}
\def\fg{\mathfrak g}
\def\F{\mathbb F}
\def\C{\mathbb C}
\def\P{\mathbb P}
\def\Q{\mathbb Q}
\def\R{\mathbb R}
\def\Z{\mathbb Z}
\def\eps{\varepsilon}
\def \l {\lambda}
\def\ol{\overline}
\def\CC#1#2{\binom {#1}{#2}}
\def\({\left(}
\def\){\right)}
\def \f#1#2{\frac{#1}{#2}}
\def\G{\Gamma}
\newcommand*\HYPERskip{&}
\newcommand*\pFq{
	\begingroup
	\catcode`\,\active
	\def ,{\HYPERskip}%
	\doHyper
}
\def\doHyper#1#2#3#4#5{%
	\, _{#1}F_{#2}\left[\begin{matrix}#3 \smallskip \\  #4\end{matrix} \; ; \; #5\right]%
	\endgroup
}
\def\M#1#2#3#4{\begin{pmatrix}#1&#2\\#3&#4\end{pmatrix}}
\newcommand*\HYPERpp{&}
\newcommand*\pPPq{
	\begingroup
	\catcode`\,\active
	\def ,{\HYPERpp}%
	\doHyperFpp
}
\def\doHyperFpp#1#2#3#4#5{%
	\, _{#1}{\mathbb P}_{#2}\left[\begin{matrix}#3 \smallskip \\  #4\end{matrix} \; ; \; #5\right]%
	\endgroup
}
\def \bk {\color{black}}
\def \P{\mathbb P}
\def \lc {\left \{}
\def \rc {\right\}}
\def \Tr {\text{Tr}}
\begin{document}
	\title{A Whipple $_7F_6$ formula revisited}
	\author{Wen-Ching Winnie Li$^\dag$, Ling Long$^\ddag$ and Fang-Ting Tu$^\ddag$}
	
	\begin{abstract}
		A well-known formula of Whipple relates certain hypergeometric values $_7F_6(1)$ and $_4F_3(1)$. In this paper we revisit this relation from the viewpoint of the underlying hypergeometric data $HD$, to which there are also associated hypergeometric character sums and Galois representations. We explain a special structure behind Whipple's formula when the hypergeometric data $HD$ are primitive and  self-dual.  If the data are also defined over $\Q$, by the work of Katz, Beukers, Cohen, and Mellit, there are compatible families of $\ell$-adic representations of the absolute Galois group of $\Q$ attached to $HD$. For specialized choices of $HD$, these Galois representations are shown to be decomposable and automorphic. As a consequence, the values of the corresponding hypergeometric character sums can be explicitly expressed in terms of Fourier coefficients of certain modular forms.  We further relate the  hypergeometric values $_7F_6(1)$ in Whipple's formula to the  periods of these modular forms.  
	\end{abstract}
	\keywords{Hypergeometric functions, Whipple's $_7F_6$ formula, hypergeometric character sums, Galois representations and modular forms}
	\subjclass[2020]{ 11F11, %(modular forms), 
		33C20, % (hypergeometric functions), 
		11F80, %(Galois representations), 
		11F67, % (Special values of automorphic L-series, periods of automorphic forms), 
		11T24} %(Other character sums and Gauss sums) }
	\maketitle
	\section{Introduction}
	
	\subsection{Hypergeometric data}
	
	Hypergeometric functions have a long history in Mathematics, and they occur in many different research branches. In recent years, rapid progress has been made in hypergeometric character sums under the framework of hypergeometric motives. It gives a unifying perspective to many hypergeometric structures and identities, see \cite[et al]{Beukers-matrix, BCM, DPVZ,Win3a,Win3X, Greene,Hoffman-Tu, Katz, Long18,McCarthy, RV}. Classical hypergeometric formulas such as Clausen's formula have  a finite field analogue by Evans and Greene  \cite{Evans-Greene}. McCarthy \cite{McCarthy} obtained several finite field analogues of well-posed hypergeometric values with argument being $\pm 1$. Based on this work,   McCarthy and Papanikolas  relate hypergeometric sums with Siegel eigenforms in \cite{McCarthy-Papanikolas}.  Demb\'el\'e, Panchishkin, Voight, and Zudilin in \cite{DPVZ} provided numerical evidence relating special values of certain hypergeometric L-functions and  Asai L-functions of Hilbert modular forms.
	In \cite{Win3X}, the authors 
	interpreted hypergeometric character sums in a manner parallel to the development of classical hypergeometric functions. 
	
	Given a  hypergeometric datum $HD=\{\alpha=\{a_1,a_2,\cdots,a_n\}, \beta=\{1,b_2,\cdots,b_n\};\l\}$ with $a_i, b_j \in \Q$,  satisfying $a_i-b_j\notin \Z$ for all $i,j$ (in which case the pair $\alpha, \beta$ is called {\it primitive}), and nonzero $\l$ in a cyclotomic field,\footnote{As an element in a global field, $\l$ in different settings below is suitably interpreted: in (1) $\l$ is viewed as a complex number via an embedding of the cyclotomic field in $\C$, while in (2) $\l$ is viewed as an element in the residue field of a nonarchimedean place of the cyclotomic field, when appropriate.} 
	one can
	\begin{itemize}
		\item[(1)]  Define a classical (generalized) hypergeometric series (HGS) $F(\alpha, \beta;\l)$  by
		$$
		F(\alpha,\beta; \l):=\sum_{k=0}^\infty \frac{(a_1)_k(a_2)_k\cdots (a_n)_k}{(1)_k(b_2)_k\cdots (b_n)_k} \l^k
		$$ 
		if it converges, where $(a)_k=\G(a+k)/\G(a)=a(a+1)\cdots(a+k-1)$ is the Pochhammer symbol.  It satisfies a Fuchsian differential equation of order $n$ with singularities at $0$, $1$, and $\infty$.
		Solutions at nonsingular points form a local system and give rise to a monodromy representation of the fundamental group of  $\C P^1\smallsetminus \{0,1,\infty\}$ with a base point, {see \cite{Beukers-note, Katz, Yoshida} for example.}
		We use $F(\alpha, \beta;\l)_n:=\sum_{k=0}^n\frac{(a_1)_k(a_2)_k\cdots (a_n)_k}{(1)_k(b_2)_k\cdots (b_n)_k} \l^k$ to denote its truncation at $n$, which plays a vital role in Dwork's unit root theory for hypergeometric functions \cite{Dwork};  see \cite{BV1, BV2} by Beukers and Vlasenko for recent developments.
		
		\item[(2)] Compute hypergeometric character sums as hypergeometric functions over finite fields $\F_q$. In this paper we focus on the finite period function $\P(HD;q)$ in \cite{Win3X} and the finite hypergeometric function  $H_q(HD)$ in \cite{BCM}. 
		
		\item[(3)] Associate $\ell$-adic Galois representations, such as $\rho_{HD,\ell}$  in \cite{Katz, Katz09}  by Katz for the Galois group of a cyclotomic field,  
		and its extension $\rho_{HD,\ell}^{BCM}$ in \cite{BCM}  by Beukers, Cohen and Mellit  to a representation of the absolute Galois group of $\Q$, when possible.  
		The traces of these representations are essentially given by $\P(HD;q)$ (see \eqref{eq:P-by-induction} below) or $H_p(HD)$ (see \eqref{eq:hq} below), respectively. These are the $\ell$-adic counterparts of  (1).
	\end{itemize}
	These perspectives are interconnected and are compatible with each other. For an overview, see \cite{Hoffman-Tu} by Hoffman and Tu. As we will see from the elaboration below, (2) and (3) in the enumeration above only depend on the multi-sets $\alpha$ and $\beta$ mod $\Z$, while $\P(HD;q)$ and $\rho_{HD, \ell}$ also depend on the order of the elements in the multi-sets. 
	
	There are different versions of hypergeometric character sums. One of
	the most convenient choices for our purpose is the period function $_n\P_{n-1}$ (see \eqref{eq:P-by-induction} below)  given in \cite[\S4]{Win3X}.   
	For any finite field $\F_q$ of characteristic $p>2$, let $\omega$ be a generator of the group $\widehat{\F_q^\times}:=\text{Hom}(\F_q^\times, \C^\times)$   
	of multiplicative characters of $\F_q$. 
	Let $\alpha=\{a_1,\cdots,a_n\}, \beta=\{1,b_2,\cdots,b_n\}$ be two multi-sets with entries in $\Q$. Denote by $M:=lcd(\alpha\cup \beta)$ the positive least  common denominator of all $a_i,b_j$. 
	For a finite field $\F_q$ containing a primitive $M$th root of $1$ 
	and any $\l\in \F_q$, we write
	
	\begin{equation}\label{eq:P}
	\P(\alpha,\beta;\l;\F_q; \omega):=\pPPq{n}{n-1}{\omega^{(q-1)a_1}&\omega^{(q-1)a_2}&\cdots&\omega^{(q-1)a_n}}{&\omega^{(q-1)b_2}&\cdots&\omega^{(q-1)b_n}}{\l;q}.
	\end{equation}
	The value depends on $a_i$ and $b_j$ modulo $\Z$, as well as their  order. All characters $\omega^{(q-1)a_i}$ and $\omega^{(q-1)b_j}$ have orders dividing $M$, so the above $\P$ function depends only on $\omega^{(q-1)/M}$, which has order $M$. 
	When the pair $\{\alpha,\beta\}$ is primitive, that is, $a_i-b_j\notin \Z$ for all $i,j$, as shown in \cite{Win3X}, 
	the above $\P$ function can be normalized to rid of the dependence on the order
	of the $a_i$ and $b_j$.  For any number field $K$, denote by $G_K =\text{Gal}(\overline \Q/K)$ the absolute Galois group of $K$. When $K=\Q(\zeta_M)$ is the cyclotomic field obtained by adjoining a primitive $M$th root of unity $\zeta_M$, we further abbreviate $G_{\Q(\zeta_M)}$ by $G(M)$. Throughout this paper, a prime ideal will always be nonzero.
	For a prime ideal $\wp$ of  $\Z[\zeta_M,1/M]$,  
	$\zeta_M \mod \wp$ in the residue field $\kappa_{\wp}$ of $\wp$ has order $M$, and it generates the cyclic group $(\kappa_{\wp}^\times)^{(N(\wp)-1)/M}$.
	Put  
	$$\P(\alpha,\beta;\l;\kappa_\wp) = \P(\alpha,\beta;\l;\kappa_\wp; \omega_\wp)$$ where $\omega_{\wp}$ is a generator of $\widehat{\kappa_{\wp}^\times}$ so that $$\omega_\wp(\zeta_M \mod \wp) = \zeta_M^{-1}.$$ Note that $\P(\alpha,\beta;\l;\kappa_\wp)$ is independent of the choice of $\omega_\wp$.  
	\medskip

	The connection between $\P(HD,\cdot)$ and Galois representation $\rho_{HD,\ell}$ is the following result of Katz. 
	
	\begin{theorem}[Katz \cite{Katz, Katz09}]\label{thm:Katz}Let  $\ell$ be a prime. Given a primitive pair of multi-sets $\alpha=\{a_1,\cdots,a_n\}$, $\beta=\{1,b_2,\cdots,b_n\}$ with $M = lcd(\alpha \cup \beta)$, 
		for any datum HD = $\{\alpha, \beta; \l\}$ with $\l \in \Z[\zeta_M,1/M]\smallsetminus \{0\}$ the following hold. 
		\begin{itemize}
			\item [i).]There exists an $\ell$-adic Galois representation $\rho_{HD,\ell}: G(M)\rightarrow GL(W_{\l})$ unramified almost everywhere such that at each %unramified 
			prime ideal $\wp$ of  $ \Z[\zeta_M,1/(M\ell \l)]$ with norm $N(\wp)=|\kappa_\wp|$,
			
			\begin{equation}\label{eq:Tr1} \Tr \rho_{HD,\ell}(\text{Frob}_\wp)= (-1)^{n-1}{\omega_{\wp}^{(N(\wp)-1)a_1}(-1)}
			\P(\alpha,\beta; 1/\l;\kappa_\wp),  
			\end{equation} 
			where  $\Frob_\wp$ stands for the  geometric  Frobenius conjugacy class
			of $G(M)$ at $\wp$.

			\item[iia).] When $\l\neq 1$,  the dimension $d :=dim_{\overline \Q_\ell}W_{\l}$ equals $n$ and all roots of the characteristic polynomial of $\rho_{HD,\ell}(\Frob_\wp)$  are algebraic numbers and have the same absolute value $N(\wp)^{(n-1)/2}$ under all archimedean embeddings.
			
			\item[iib).] When $\l\neq 1$ and $HD$ is self-dual  (see Definition \ref{def:1} below),  then $W_{\l}$ admits a symmetric (resp. alternating) bilinear pairing if
			$n$ is odd (resp. even).
			
			\item[iii).] When $\l=1$, dimension  of $W_\lambda$ equals $n-1$. In this case if $HD$ is self-dual, then $\rho_{HD,\ell}$ has a subrepresentation $\rho_{HD, \ell}^{prim}$ of dimension $2\lfloor \frac {n-1}2 \rfloor$ whose representation space admits a symmetric (resp. alternating) bilinear pairing if $n$ is odd (resp. even). All roots of the characteristic polynomial of $\rho_{HD, \ell}^{prim}(\Frob_\wp)$ have absolute value $N(\wp)^{(n-1)/2}$, the same as  (iia). 
		\end{itemize}
	\end{theorem}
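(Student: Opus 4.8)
The plan is to realize $W_\l$ geometrically as a stalk of one of Katz's \emph{hypergeometric sheaves} and to deduce each of (i)--(iii) from the structural properties of that sheaf. First I would fix $\ell$ and work over the ring $\Z[\zeta_M,1/(M\ell)]$, attaching to each $a_i$ a multiplicative character $\chi_i$ and to each $b_j$ a character $\rho_j$ of the residue fields $\kappa_\wp$, normalized through $\omega_\wp$ exactly as in \eqref{eq:P} so that $\chi_i \leftrightarrow \omega_\wp^{(N(\wp)-1)a_i}$; the choice $b_1=1$ makes $\rho_1=\mathbf 1$ the trivial character. The primitivity hypothesis $a_i-b_j\notin\Z$ says precisely that $\chi_i\neq\rho_j$ for all $i,j$, which is Katz's irreducibility condition. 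One then forms the hypergeometric sheaf $\mathcal H=\mathcal H(\chi_1,\dots,\chi_n;\rho_1,\dots,\rho_n)$ on $\mathbb{G}_m$ by iterated $!$-convolution of the Kummer sheaves $\mathcal L_{\chi_i}$ and of the duals of $\mathcal L_{\rho_j}$. Since the two multisets have the same cardinality $n$, the resulting $\ol{\Q}_\ell$-sheaf is lisse of rank $n$ on $\mathbb{G}_m\minus\{1\}$ and tamely ramified, and I would set $W_\l$ to be its stalk at the point $1/\l$, carrying the natural action of $G(M)$; unramifiedness away from $M\ell\l$ is built into the construction.

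For (i), I would compute $\Tr\rho_{HD,\ell}(\Frob_\wp)$ as the value of the trace function of $\mathcal H$ at $1/\l$. Because the trace function of a $!$-convolution is the multiplicative convolution of the trace functions of its factors, and the trace function of $\mathcal L_{\chi_i}$ is the character $\chi_i$, the Grothendieck--Lefschetz formula expresses this value, up to a normalizing scalar, as the finite period $\P(\alpha,\beta;1/\l;\kappa_\wp)$ of \eqref{eq:P}. The scalar is the content of the factor $(-1)^{n-1}\,\omega_\wp^{(N(\wp)-1)a_1}(-1)$: the sign $(-1)^{n-1}$ records the degree shifts $[1]$ introduced by the successive convolutions, while $\omega_\wp^{(N(\wp)-1)a_1}(-1)=\chi_1(-1)$ comes from the value at $-1$ in the normalization of the convolution. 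Matching these normalizations against the finite-field hypergeometric identities of \cite{Win3X} is the most bookkeeping-intensive part, but it is mechanical once the dictionary between $\mathcal H$ and $\P$ is fixed.

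Assertions (iia) and (iii) I would read off from the local monodromy of $\mathcal H$ at $0,1,\infty$. Away from the singular point the generic rank is $n$, so for $\l\neq 0,1$ the stalk $W_\l$ has dimension $d=n$, which is (iia). At the value $\l=1$ the point $1/\l=1$ is singular, and Katz shows that for a type $(n,n)$ hypergeometric sheaf the local monodromy there is a pseudoreflection; consequently the stalk of the middle extension $j_{!*}\mathcal H$ at $1$ is the hyperplane of local invariants, of dimension $n-1$, giving the dimension $d=n-1$ in (iii). When $HD$ is self-dual the pseudoreflection at $1$ becomes a true reflection or a unipotent transvection according to the parity of $n$, and the subrepresentation $\rho_{HD,\ell}^{prim}$ of dimension $2\lfloor(n-1)/2\rfloor$ is the pure piece it cuts out. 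The absolute-value statements in (iia) and (iii) are Deligne's purity theorem: Katz proves that $\mathcal H$ is pure of weight $n-1$, so every eigenvalue of $\Frob_\wp$ has archimedean absolute value $N(\wp)^{(n-1)/2}$, and purity descends to the primitive subquotient at $\l=1$.

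Finally (iib) follows from an autoduality of $\mathcal H$. When $HD$ is self-dual the multiset $\{\chi_i\}$ is stable under $\chi\mapsto\chi^{-1}$ (and likewise for $\{\rho_j\}$), so $\mathcal H$ carries a perfect pairing $\mathcal H\otimes\mathcal H\to\ol{\Q}_\ell(1-n)$; Katz's description of the local monodromy at $\infty$ determines whether this pairing is symmetric or alternating, and the answer is governed by the parity of $n$ exactly as stated, yielding the symmetric (resp. alternating) form on $W_\l$ for $n$ odd (resp. even). Throughout, the genuinely hard inputs are not elementary: the purity underlying all the absolute-value claims rests on Deligne's Weil~II rather than on any manipulation of sums, and the precise local analysis at $\l=1$ --- the exact Jordan type of the tame monodromy, hence the dimension drop and the splitting off of $\rho_{HD,\ell}^{prim}$ --- is the second main obstacle. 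The remaining work is a careful matching of normalizations.
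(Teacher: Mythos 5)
Your sketch follows exactly the route the paper itself relies on: the paper gives no proof of Theorem \ref{thm:Katz}, attributing it to Katz and pointing to \cite{LTYZ} for a derivation from \cite{Katz,Katz09}, and that derivation is precisely the hypergeometric-sheaf argument you outline --- the convolution construction with primitivity as Katz's irreducibility condition, matching the trace function against the $\P$-function of \eqref{eq:P} up to the sign and $\chi_1(-1)$ normalization, pseudoreflection local monodromy at $1$ giving the dimension drop and the primitive piece at $\l=1$, Weil~II purity for the absolute values, and Katz's autoduality sign for (iib). You also correctly locate the two genuinely hard inputs (purity and the exact local Jordan type at $\l=1$) where the cited sources do the real work, so the proposal is a faithful reconstruction of the paper's (cited) proof rather than an alternative to it.
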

	Here and in what follows, when $\ord_\wp \l \ge 0$, the $\l$ in $\P(\alpha, \beta; \l; \kappa_\wp)$ is viewed as an element in the residue field $\kappa_\wp$. The reader is referred to \cite{Katz, Katz09} or  \cite{LTYZ}  for a discussion related to the formulation of Theorem \ref{thm:Katz}.   
	The relation \eqref{eq:Tr1}  sometimes appears as equating certain hypergeometric character sums with either coefficients of modular forms or Jacobi sums,  see \cite[etc.]{Win3a, FOP, Fuselier10,Fuselier13, LTYZ, Ono98, Salerno13, Tu-Yang-evaluation}.

	\begin{remark}\label{phi(M,a_1)}The sign $\omega_{\wp}^{(N(\wp)-1)a_1}(-1)$, as $\wp$ varies, defines a character $\phi(M,a_1)$ of $G(M)$, which is trivial unless $\ord_2 ~M = -\ord_2 ~a_1 = r \ge 1$. In the latter  case it is the quadratic character corresponding to the Hilbert's quadratic norm residue symbol $\(\frac{\zeta_{2^r}}{\cdot}\)_2$ on the field $\Q(\zeta_M)$.
	\end{remark}
	
	\subsection{The purpose of this paper}\label{sec:purpose} 
	A well-known formula of Whipple relates $_7F_6(1)$ to $_4F_3(1)$. When specialized to the self-dual form with a fixed prime $p$, it reads as follows: 
	{ \begin{multline}\label{eq:Whipple2}
		\pFq{7}{6}{\f12&  {\frac 54}&c&1-c&\frac{1-{p}}2&f&1-f}{&{\frac14}& \frac32-c&
			\f12+c&1+\frac {{p}}2&\frac32-f&\f12+f}{1}=\\  \frac{\G(\frac p2)\G(\frac32-f)\G(\f12+f)\G(\frac p2)}{\G(\frac12)\G(\frac{1}2)\G(1+\frac p2-f)\G(\frac p2+f)}\times\left( p\cdot \pFq{4}{3}{\f12&\frac{1-  {p}}2&f &1-f}{&1-\frac {{p}}2&\frac32-c&\f12+c}{1}\right).
		\end{multline}}
	
	The reader is referred to \S \ref{sec:Whipple} for the derivation. The  hypergeometric datum corresponds to the left hand side (after deleting the pair $\frac 54$ and $\frac 14$ that correspond to the same character) is  
	\begin{multline*}
	HD_1(c,f):= \\
	\left \{ \alpha_6(c,f):=\left \{ \frac12,c,1-c,\frac12,f,1-f \right\},   \beta_6(c,f)=\left \{ 1,\frac32-c,\frac12+c,1,\frac32-f,\f12+f\right\};  {1}\right\};
	\end{multline*}
	while that corresponds to the right hand side is 
	\begin{equation*}
	HD_2(c,f):= \left\{\alpha_4(f):=\lc \f12, \bk \f12, f,1-f\rc,\beta_4(c):=\lc  1, \bk 1,\frac32-c,\f12+c\rc;  {1} \right\}.\end{equation*} 
	
	The formula (\ref{eq:Whipple2}) says a period of the hypergeometric datum $HD_1(c,f)$ is ``degenerate" in the sense that it can be  represented by a period of the hypergeometric datum $HD_2(c,f)$.
	The first main result of this paper is to give a Galois representation-theoretic interpretation of the above Whipple formula for special pairs of $(c,f)$.  
	
	\begin{theorem}\label{thm:LLT2main}For $(c,f)$   such that both $HD_1(c,f)$ and $HD_2(c,f)$ are primitive,  let $ M(c,f) := lcd( HD_2(c,f) \bk)$, and $N(c,f) := lcd(\frac{1+2f-2c}4, \frac{3-2f-2c}4)$. Then either $N(c,f)=2 M(c,f)$ or $N(c,f)=M(c,f)$. Given any prime $\ell$, the semi-simplification ${\rho_{HD_1(c,f),\ell}}^{ss}$ of $\rho_{HD_1(c,f),\ell}$ decomposes as 
		$$  {\rho_{HD_1(c,f),\ell}}^{ss}|_{G(N(c,f))}\cong (\epsilon_\ell \otimes   { \rho_{HD_2(c,f),\ell})}|_{G(N(c,f))} \oplus \sigma_{sym,\ell}$$  where $\epsilon_\ell$ is the $\ell$-adic cyclotomic character, and $\sigma_{sym,\ell}$ is a 2-dimensional representation of  $G(N(c,f))$ that can be computed explicitly. 
	\end{theorem} \bk
	
	There are only seven un-ordered  pairs of $(c,f)\in \Q^2$ for which $HD_1(c,f)$ is defined over $\Q$ (see next section for definition) 
	and primitive. They are
	\begin{equation}\label{eq:7pair}
	\left(\f12,\f12\right),\left(\f12,\frac13\right),\left(\f12,\frac16\right),\left(\frac13,\frac13\right),\left(\frac16,\frac16\right),\left(\frac 15,\frac25\right),\left(\frac 1{10},\frac3{10}\right).
	\end{equation} 
	\noindent Among them $HD_2(c,f)$ is primitive for all pairs,  
	but is defined over $\Q$ only for the first five pairs. Direct computations show that $\rho_{HD_1(c,f),\ell}$ and $\rho_{HD_2(c,f),\ell}$ are semi-simple (cf. Theorems \ref{thm:2} and \ref{thm:HD2} in \S\ref{sec:mainresults}). 
	Thus we obtain the following specialization of Theorem \ref{thm:LLT2main}.
	
	\begin{theorem}\label{thm:main} For the seven pairs of $(c,f)$ listed in (\ref{eq:7pair}), $N(c,f) = 2 M(c,f)$ except for $(c,f) = (\frac12, \frac13)$, in which case $N(c,f) = M(c,f)$. Further 
		$$  {\rho_{HD_1(c,f),\ell}}|_{G(N(c,f))}\cong (\epsilon_\ell \otimes   { \rho_{HD_2(c,f),\ell})}|_{G(N(c,f))} \oplus \sigma_{sym,\ell},$$
		where $\sigma_{sym,\ell}$ is a 2-dimensional representation of  $G(N(c,f))$. 
	\end{theorem}

	For the pairs in (\ref{eq:7pair}), the representations $\rho_{HD_i(c,f),\ell}$, $i = 1,2$, can be extended to $G_\Q$. One of the extensions, $\rho_{HD_i(c,f),\ell}^{BCM}$, was studied by Beukers, Cohen and Mellit \cite{BCM}, as explained in \S \ref{sec:BCM}.
	
	Our second purpose is to address, for the seven  cases, three natural questions, explained below, connecting different aspects of the  hypergeometric data, extending the result in \cite{Long18} by Long.
	
	\begin{itemize}
		\item[a)] {\bf Modularity.} A Galois representation  
		is called \emph{modular} or \emph{automorphic} if it corresponds to an automorphic representation of a general linear group as described in the Langlands program. As both $\rho_{HD,\ell}$ and $\rho_{HD,\ell}^{BCM}$ are nice Galois representations arising from geometry and unramified almost everywhere, they are expected to be automorphic. Can this be established when they are  representations of $G_\Q$, such as $\rho_{HD,\ell}^{BCM}$?

		\item[b)]{\bf Truncated HGS and  $p$-adic periods.} Assume that at $p$, the characteristic polynomial of $\rho_{HD,\ell}^{BCM}(\Frob_p)$ has a {\bf unique} unit root $\gamma_p$.  When $\beta=\{1,1,\cdots,1\}$, Dwork showed that,  when $F(\alpha,\beta;\l)_{p-1}\neq 0 \mod p,$ such  a $\gamma_p$ exists and can be computed from truncated HGS  $$\gamma_p:=\gamma_p(HD)\equiv F(\alpha,\beta;\l)_{p^{s}-1}/F(\alpha,\beta;\l^p)_{p^{s-1}-1}\mod p^s.$$ 
		If $\rho_{HD,\ell}^{BCM}$ is modular, is there a $2$-dimensional component corresponding to a Hecke-eigenform $g$ such that at such $p$, the Hecke-eigenvalue $a_p(g)$ of $g$ satisfies the congruence relation $a_p(g) \equiv \gamma_p \mod p$?
		
		\item[c)]{\bf  HGS and Complex periods.} If the answers to  both questions a) and b) are positive, is the untruncated hypergeometric value $F(\alpha, \beta;\l)$ related to a 
		period of the normalized Hecke-eigenform $g$ which affords the unit roots in b)?
	\end{itemize}

	\subsection{An example}\label{sec:example}
	
	We illustrate an example of Katz representation $\rho_{HD,\ell}$ of $G_\Q$, and discuss the three questions a), b), c) raised above.

	Let $ \alpha=\{\f12,\frac12,\frac12\},\beta=\{1,1,1\}$, which form a  primitive pair with $M = lcd(\alpha, \beta) = 2$. For any $\l\in \Q^\times$, $p\nmid lcd(\{\frac12,\l\})$, by definition,
	\begin{eqnarray*} 
		\P(HD; p) = \P(\alpha,\beta;\l; \F_p) &=& \pPPq{3}{2}{\omega_p^{(p-1)/2}&\omega_p^{(p-1)/2}&\omega_p^{(p-1)/2}}{&\eps&\eps}{\l;p}\\
		&=&\sum_{x, y \in \F_p} \phi_p(xy(y-1)(1-x)(x-\l y)),
	\end{eqnarray*} where $\eps$ is the trivial character and $\phi_p = \omega_p^{(p-1)/2}$ is the quadratic character of $\F_p^\times$ extended to $\F_p$ by $\phi_p(0) = 0$. 
	
	Now consider the following surface defined by the  affine  equation 
	\begin{equation}\label{eq:A}
	\mathcal A_\l: \quad s^2=xy(x-\l y)(y-1)(1-x).
	\end{equation}It is a double cover of $\C P^2$ ramified at 6 lines. Its smooth model is a K3 surface obtained by blowing up the intersection points as described in \cite[\S 8]{Stienstra-Beukers} by Stienstra and Beukers.  The smooth model has good reduction at  $p$. The  number of affine $\F_p$-rational points of $ \mathcal A_\l$ is equal \bk to 
	$$ \sum_{x,y\in \F_p}(1+\phi_p(xy(x-\l y)(y-1)(1-x)))=p^2+\P(HD;p).$$ For a generic $\l$, the Picard number of the K3 surface $\mathcal A_\l$ is 19, as shown in \cite{AOP} by Ahlgren, Ono, and Penniston or \cite{Long03} by Long. The associated representation $\rho_{HD,\ell}$ of $G_\Q$ is 3-dimensional which is the symmetric square of a 2-dimensional Galois representation arising from an explicit elliptic curve $E_\l$, see \cite[(2)]{AOP}. This essentially  follows from the Clausen formula, see (\ref{eq:Clausen}) and  Theorem \ref{thm:E-G} for the finite field version by Evans and Greene. When $\l=1$, $\mathcal A_1$ has Picard number 20 and $\rho_{HD,\ell}$ is 2-dimensional. 
	As shown in \cite[section 5]{LLL} by the first two authors and Liu, the action of the Galois group $G_\Q$  on the  $2$-dimensional subspace of the cohomological space $H_{\acute{e}t}^2(\mathcal A_1\otimes_\Q \overline{\Q}, \Q_\ell)$ spanned by the transcendental cycles on $\mathcal A_1$ has the same trace as $\rho_{HD,\ell}$ twisted by $\phi(2, 1/2)= \(\frac{-1}{\cdot}\){=\chi_{-1}}$. In this paper for a square-free integer $d$,  $\chi_d$  denotes the quadratic character of $G_\Q$ with kernel $G_{\Q(\sqrt d)}$; it is also identified with the quadratic character of $\Q$ 
	associated to the extension $\Q(\sqrt d)/\Q$. 
	On the other hand, since $\rho_{HD,\ell}(\Frob_p)$ has trace zero at $p \equiv 3 \mod 4$ where $\(\frac{-1}{p}\) = -1$, $\rho_{HD,\ell}$ is invariant under twist by $\phi(2, 1/2)$ introduced in Remark \ref{phi(M,a_1)} and we get the representation  $\rho_{HD,\ell}$.

	It is explained in \cite{Stienstra-Beukers} by Stienstra and Beukers (see also \cite[section 3]{ALL} and \cite[section 5]{LLL}) that $\mathcal A_1$ is birationally isomorphic to an elliptic surface fibered over a genus $0$ curve $X$ over $\Q$. 
	The above $G_\Q$ action on the $2$-dimensional subspace of $H_{\acute{e}t}^2(\mathcal A_1\otimes_\Q \overline{\Q}, \Q_\ell)$ can also be realized on $H^1$ of $X \otimes_\Q \overline{\Q}$ with coefficients in an $\ell$-adic sheaf on $X$. This is the prototype of the Katz representation $\rho_{HD,\ell}$ in Theorem \ref{thm:Katz} for a general hypergeometric datum HD.  
	\bigskip

	Ahlgren in \cite{Ahlgren01} showed that for any prime $p>2$, $$\P(HD;p)=\P\left( \left\{\f12,\frac12,\frac12\right\},\{1,1,1\};1; \F_p\right)=a_p(\eta(4\tau)^6),$$the $p$th coefficient of $\eta(4\tau)^6$, which admits complex multiplication (CM) by {$\chi_{-1}$}. Here $\eta(\tau)$ is the Dedekind eta function.
	In other words, $ \rho_{HD,\ell}$   is isomorphic to the Galois representation attached to $\eta(4\tau)^6$, hence it is modular. 
	This is a). For b), Dwork's theorem holds for the ordinary prime $p\equiv 1 \mod 4$. In particular, the unit root can be written as $\gamma_p=-\G_p(\frac14)^4$  where $\G_p(\cdot)$ denotes the $p$-adic Gamma function.
	In fact, Ahlgren showed that a stronger congruence, called supercongruence, holds. Namely for each prime $p> 2$,
	\begin{equation} F\left (\left\{ \f12,\frac12,\frac12\right \},\{1,1,1\};1 \right)_{p-1}\equiv a_p(\eta(4\tau)^6) \mod p^2.\end{equation}In \cite{LR} Long and Ramakrishna further showed that when $p>2$, \begin{equation} F\left (\left\{ \f12,\frac12,\frac12\right \},\{1,1,1\};1 \right)_{p-1}\equiv \begin{cases} -\G_p(\frac14)^4 \quad \text{ if } p\equiv 1\mod 4\\ -\frac{p^2}{16}\G_p(\frac14)^4 \quad \text{ if } p\equiv 3\mod 4 \end{cases}  \mod p^3.\end{equation}
	An archimedean version for the untruncated hypergeometric value is proved in  our paper  \cite{LLT}:
	\begin{equation}\pFq32{\f12,\f12,\frac12}{&1,1}1= \frac{16}{\pi^2}L(\eta(4\tau)^6,2)=\frac{\G(\frac14)^4}{4\pi^3}, 
	\end{equation}which is c). Similar recent discussions relating $L$-values of hypergeometric motives to $L$-functions of modular forms or Hilbert modular forms include \cite{Osburn-Straub} by Osbrun, Straub, and \cite{DPVZ} by Demb\'el\'e, Panchishkin, Voight, and Zudilin.

	\subsection{Main results on modularity, congruences and periods}\label{sec:mainresults}

	To each of the seven pairs $(c,f)$ in (\ref{eq:7pair}), 
	by Theorem \ref{thm:Katz} there are associated representations $\rho_{HD_1(c,f), \ell}$ and $\rho_{HD_2(c,f),\ell}$ of the Galois group $G(M(c,f))$,    
	where $M(c,f) = lcd(\alpha_6(c,f)\cup \beta_6(c,f)) = lcd(\alpha_4(f), \beta_4(c))$, of degrees $5$ and $3$, respectively. Moreover, $\rho_{HD_1(c,f), \ell}$ can be extended to $\rho_{HD_1(c,f), \ell}^{BCM}$ of $G_\Q$ by Theorem \ref{thm:KBCM}. The same holds for $\rho_{HD_2(c,f), \ell}$ for the first five pairs; and as shown in \S \ref{HD2},  this is also true for the remaining two pairs.

	It follows from  Theorem \ref{thm:KBCM}  that, for the seven pairs (resp. the first five pairs) of $(c,f)$ in \eqref{eq:7pair}, $\rho_{HD_1(c,f),\ell}^{BCM}$ (resp. $\rho_{HD_2(c,f),\ell}^{BCM}$) has a $4$-dimensional (resp. $2$-dimensional) sub-representation $\rho_{HD_1(c,f),\ell}^{BCM,prim}$ (resp. $\rho_{HD_2(c,f),\ell}^{BCM,prim}$) and a $1$-dimensional complement $\rho_{HD_1(c,f),\ell}^{BCM,1}$ (resp. $\rho_{HD_2(c,f),\ell}^{BCM,1}$).

	\medskip
	
	We begin with the modularity question a). First consider $\rho_{HD_2(c,f), \ell}^{BCM}$.
	
	%%%%%%%%%%%%%%%%%%Modularity of HD_2%%%%%%%%%%%%%%%
	\begin{theorem}\label{thm:HD2}
		For each of the first five pairs of $(c,f)$ such that $HD_2(c,f)$ is defined over $\Q$, both components of $\rho_{HD_2(c,f),\ell}^{BCM}$ are modular, which are explicitly identified by the  information at primes $p>7$ in the table below. For each of the remaining two pairs, $M(c,f) = 10$ and the representation $\rho_{HD_2(c,f),\ell}$ of $G(10)$ can be extended to $G_\Q$, with both components modular. For each pair we list extensions with integral traces under ``$\rho_{HD_2(c,f),\ell}^{BCM,prim}$" and ``$\rho_{HD_2(c,f),\ell}^{BCM,1}$", resp.   
		$$ 
		\begin{array}{|c|c|c|c|c|c|c|c|}
		\hline
		(c,f)&\alpha&\beta&w(HD_2)&\Tr\, \rho_{HD_2(c,f),\ell}^{BCM,prim}(\Frob_p)& \rho_{HD_2(c,f),\ell}^{BCM,1}\\
		\hline
		(\f12,\frac12)&(\f12,\f12,\f12,\f12)&(1,1,1,1)&4& a_p(f_{8.4.a.a})& \epsilon_\ell \\ \hline
		(\f12,\frac13)&(\f12,\f12,\frac13,\frac23)&(1,1,1,1)& 4& a_p(f_{36.4.a.a})=\left (\frac{-3}p \) a_p(f_{12.4.a.a})& \chi_{3} \cdot \epsilon_\ell\\
		\hline
		(\frac13,\frac13)&(\f12,\f12,\frac13, \frac23)&(1,1,\frac76,\frac56)&  4&  a_p(f_{6.4.a.a})= \left (\frac{-3}p \) a_p(f_{18.4.a.a})& \chi_{-3}\cdot\epsilon_\ell \\
		\hline
		(\f12,\frac16)&(\f12,\f12,\frac16, \frac56)&(1,1,1,1)& 4& a_p(f_{72.4.a.b})=\left (\frac{-3}p \) a_p(f_{24.4.a.a})&\epsilon_\ell\\
		\hline
		(\frac16,\frac16)&(\f12,\f12,\frac16,\frac56)&(1,1,\frac43,\frac23)& 2& a_p(f_{24.2.a.a})\cdot p={\left (\frac{-3}p \)} a_p(f_{72.2.a.a})\cdot p& \chi_{-3}\cdot\epsilon_\ell  \\\hline
		(\frac15,\frac25)&\ (\f12,\f12,\frac{2}5,\frac35)&(1,1,\frac{13}{10},\frac7{10})& 4& a_p(f_{50.4.a.b})=\(\frac5p \) a_p(f_{50.4.a.d})&\epsilon_\ell  \text{ or } \chi_{5}\cdot \epsilon_\ell   \\\hline
		(\frac1{10},\frac3{10})& (\f12,\f12,\frac3{10},\frac7{10})& (1,1,\frac75,\frac35)& 2& a_p(f_{200.2.a.b})\cdot p =\(\frac5p \) a_p(f_{200.2.a.d})\cdot p&  \epsilon_\ell  \text{ or } \chi_{5}\cdot \epsilon_\ell \\
		\hline
		\end{array}
		$$
	\end{theorem}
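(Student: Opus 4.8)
The plan is to treat the three-dimensional representation $\rho_{HD_2(c,f),\ell}^{BCM}$ by splitting it into its $1$-dimensional and $2$-dimensional primitive constituents and identifying each. For the first five pairs $HD_2(c,f)$ is defined over $\Q$, so Theorem \ref{thm:KBCM} supplies $\rho_{HD_2(c,f),\ell}^{BCM}$ together with its decomposition into $\rho_{HD_2(c,f),\ell}^{BCM,prim}$ (dimension $2\lfloor (n-1)/2\rfloor = 2$) and the $1$-dimensional complement $\rho_{HD_2(c,f),\ell}^{BCM,1}$, while part (ii) gives $\Tr\,\rho_{HD_2(c,f),\ell}^{BCM}(\Frob_p) = \phi(M,a_1)(\Frob_p)\,\chi(\alpha_4,\beta_4;\F_p)\,H_p(HD_2(c,f))\,p^{(n-m)/2}$, which for $p>7$ I would evaluate in closed form via the Gauss-sum expression of $H_p$. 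For the last two pairs $HD_2$ is not defined over $\Q$ (here $M(c,f)=10$ and $\alpha_4(f)\bmod\Z$ is not Galois-stable), so Theorem \ref{thm:KBCM} does not apply directly; instead I would first produce the extension of $\rho_{HD_2(c,f),\ell}$ from $G(10)$ to $G_\Q$ by invoking the decomposition of Theorem \ref{thm:main}, in which the $3$-dimensional summand of $\rho_{HD_1(c,f),\ell}|_{G(N(c,f))}$ is $\epsilon_\ell\otimes\rho_{HD_2(c,f),\ell}|_{G(N(c,f))}$ and $\rho_{HD_1(c,f),\ell}$ does extend to $G_\Q$.

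The $1$-dimensional piece $\rho_{HD_2(c,f),\ell}^{BCM,1}$ is handled directly. Its Frobenius trace is the ``imprimitive'' factor of the hypergeometric sum, a product of Jacobi sums; using \eqref{eq:P-H}, \eqref{eq:Gauss} and \eqref{eq:J} I would rewrite it as a Dirichlet character times a power of $p$, matching it against the last column of the table ($\epsilon_\ell$, $\chi_3\cdot\epsilon_\ell$, $\chi_{-3}\cdot\epsilon_\ell$, and so on). Since a $1$-dimensional $\ell$-adic character of $G_\Q$ is automatically automorphic, this settles the modularity of that constituent.

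The substance is the $2$-dimensional primitive piece. First I would determine its Hodge--Tate weights from Katz's interlacing (zigzag) recipe applied to the sorted parameters of $\alpha_4(f)\cup\beta_4(c)$; this decides whether the constituent has weights $\{0,3\}$ (a weight-$4$ newform, the five entries with $w(HD_2)=4$ and trace $a_p$) or $\{1,2\}$ (a Tate twist $\epsilon_\ell\otimes\rho_g$ of a weight-$2$ newform, the two entries with $w(HD_2)=2$ and trace $a_p\cdot p$), so that after untwisting I am in all cases looking at a $2$-dimensional $\rho$ of $G_\Q$ attached to a form of weight $w(HD_2)$. To prove this $\rho$ is modular I would verify the hypotheses of Serre's conjecture (Khare--Wintenberger) together with modularity lifting: oddness of $\rho$ follows from the self-duality of $HD_2$ (the alternating pairing from Theorem \ref{thm:KBCM}(iii) forces $\det\rho=\epsilon_\ell^{\,w-1}$ up to a finite-order character, hence $\det\rho(c)=-1$), $\rho$ is unramified outside the primes dividing $\ell M(c,f)$, and for a suitable $\ell$ the residual representation is irreducible. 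Modularity then yields a newform of weight $w(HD_2)$ whose level I would bound by computing the conductor of $\rho$ from the local ramification of the hypergeometric sheaf at the bad primes $\{2,3\}$ (and $5$ in the last two cases). Within the resulting finite-dimensional space of newforms of that weight and level, the form listed in the table is singled out by comparing $a_p$ with the evaluated trace at finitely many primes, the Sturm bound guaranteeing that agreement up to that bound forces equality.

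Finally, the relations such as $a_p(f_{36.4.a.a})=\left(\frac{-3}{p}\right)a_p(f_{12.4.a.a})$ recorded across the columns assert that the two listed newforms are quadratic twists of one another; I would verify each from the standard behaviour of level and nebentypus under quadratic twisting followed by a Sturm-bound comparison of $q$-expansions. The main obstacle is the rigorous modularity step for the primitive piece: guaranteeing residual irreducibility and oddness uniformly, so as to avoid the reducible locus at small $\ell$, and computing the conductors precisely enough that the newform identification is unambiguous. A subsidiary difficulty is constructing and controlling the $G_\Q$-extension of $\rho_{HD_2(c,f),\ell}$ for the two pairs where $HD_2$ is not defined over $\Q$, which rests on Theorem \ref{thm:main}.
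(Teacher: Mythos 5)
Your overall architecture (split off the degree-$1$ piece, show the degree-$2$ piece is odd, irreducible, unramified outside small primes and modular, bound the level, then pin down the eigenform by finitely many trace comparisons) matches the paper's, but two of your steps have genuine gaps. The more serious one is the extension to $G_\Q$ for $(c,f)=(\frac15,\frac25)$ and $(\frac1{10},\frac3{10})$. Theorem \ref{thm:main} is a statement about restrictions to $G(N(c,f))=G(20)$, so your route — even granting that the $3$-dimensional summand of $\rho_{HD_1(c,f),\ell}^{BCM}|_{G(20)}$ is actually $G_\Q$-stable, which requires ruling out that conjugation by $G_\Q$ swaps the two $2$-dimensional constituents (the paper verifies multiplicity-freeness precisely to control such issues) — produces at best a $G_\Q$-representation extending $\rho_{HD_2(c,f),\ell}|_{G(20)}$. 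The theorem asserts that the representation of $G(10)$ extends, and two representations of $G(10)$ that agree on the index-$2$ subgroup $G(20)$ need not be isomorphic; closing this gap requires an extra Clifford-theoretic twisting argument you do not supply. There is also a circularity hazard: in the paper, the fact that $\rho_{HD_2,\ell}$ extends to $G_\Q$ (Lemma \ref{lem:HD-Q}) is an \emph{input} to the $G_\Q$-decomposition of $\rho_{HD_1,\ell}^{BCM}$ (Proposition \ref{cor:1}), not a consequence of it. The paper's own argument is direct and avoids all of this: self-duality gives invariance under $\zeta_5\mapsto\zeta_5^{-1}$, the Kummer-type transformation (Proposition \ref{prop: Kummer}, via Lemma \ref{lem:unorder}) plus Chebotarev (Remark \ref{rk:density}) gives invariance under $\zeta_5\mapsto\zeta_5^3$, and the relevant quotients are cyclic, so the $G(10)$-representation itself extends.

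The second gap is your oddness argument. Knowing $\det\rho=\epsilon_\ell^{\,w-1}$ only ``up to a finite-order character'' $\chi$ gives $\det\rho(c)=-\chi(c)$, and nothing in Theorem \ref{thm:KBCM}(iii) prevents $\chi$ from being odd (e.g.\ $\chi_{-1}$ or $\chi_{-3}$, both of conductor dividing $24$), so oddness does not follow; the paper removes the ambiguity by noting $\chi$ is unramified outside $\{2,3\}$ (hence of conductor dividing $24$) and computing characteristic polynomials at a few primes to see $\chi$ is trivial (in the last two cases the options are $\epsilon_\ell^3$ and $\chi_5\epsilon_\ell^3$, and $\chi_5$ is even, so oddness holds either way). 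Finally, the obstacle you flag as the main one — residual irreducibility, needed for a Khare--Wintenberger approach — is avoidable: the paper instead invokes Theorem \ref{thm:modulairty}, which requires only absolute irreducibility of the $\ell$-adic representation itself, and that is supplied by the purity argument of Lemma \ref{irreducible} together with crystallinity at a prime $\ell>5$ split in $\Q(\zeta_M)$ (Katz). Similarly, your plan to compute conductors from the wild ramification of the hypergeometric sheaf at $2$ and $3$ is unnecessary and hard; since the traces are integral, Serre's bound (Theorem \ref{thm:Serre}) already confines the level to divide $2^8\cdot 3^5$, after which the finite database comparison identifies the forms.
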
 
	
	The modular forms above are denoted by their LMFDB labels; for instance $f_{8.4.a.a}$ means a normalized cuspidal newform of  level 8 and weight 4. See \eqref{eq:w}  for the definition of $w(HD_2)$.  
	\medskip

	%%%%%%%%%%%%%%%Modularity of HD_1
	Our next theorem  records the modularity  of $\rho_{HD_1(c,f),\ell}^{BCM}$.
	
	\begin{theorem}\label{thm:2}
		For each pair $(c,f)$ in \eqref{eq:7pair}, $\rho_{HD_1(c,f),\ell}^{BCM,prim}$ decomposes as a sum of two $2$-dimensional $G_\Q$-modules, both of these two subrepresentations are modular. There are two normalized cuspidal newforms $\mathfrak {f}_{c,f}$ and $\mathfrak {g}_{c,f}$ (expressed in their LMFDB labels) such that for each prime $p\ge 7$, $\Tr \,\rho_{HD_1(c,f),\ell}^{BCM}(\Frob_p)$ is explicitly described by the Fourier coefficients $a_p(\mathfrak {f}_{c,f})$ and $a_p(\mathfrak {g}_{c,f})$ in the following table, where $\ff_{c,f}$ (resp. $\fg_{c,f}$ ) is the first (resp. second) modular form listed in the last column.

		$$
		\begin{array}{|c|c|c|c|c|c|c|c|}
		\hline
		(c,f) & \mathcal J(HD_1(c,f); \F_p)&w(HD_1)&w(HD_2)&\Tr\, \rho_{HD_1(c,f),\ell}^{BCM}(\Frob_p)   \\ \hline
		(\f12,\frac12)  &-\(\frac{-1}p\)   &6 &4&a_p(f_{8.6.a.a}) +p\cdot a_p(f_{8.4.a.a})+\(\frac {-1}p\)p^2\\ \hline
		\(\frac 12,\frac13\)&-\(\frac{-1}p\)p  &6 &4&a_p(f_{4.6.a.a}) +p\cdot a_p(f_{12.4.a.a})+\(\frac 3p\)p^2\\
		\hline
		(\frac13,\frac13) &-\(\frac{-1}p\)p^2  &6&4&a_p(f_{6.6.a.a})+ p\cdot a_p(f_{18.4.a.a})+\(\frac{-1}p\)p^2 \\ \hline
		(\f12,\frac16)&-\(\frac{-1}p\)p &4&4&p\cdot a_p(f_{8.4.a.a})   + p\cdot a_p(f_{24.4.a.a})+ \(\frac 3p\)p^2 \\
		\hline  (\frac16,\frac16) &-\(\frac{-1}p\)p^2   &2&2&p^2\cdot a_p(f_{24.2.a.a})+p^2\cdot a_p(f_{72.2.a.a})+\(\frac{-1}p\)p^2 \\  \hline
		(\frac15,\frac25)     &-\(\frac{-1}p\)p^2 &4&4&p\cdot a_p(f_{10.4.a.a})+p\cdot a_p(f_{50.4.a.d})+\(\frac{-5}p\)p^2\\
		\hline
		(\frac1{10},\frac3{10})& - \(\frac{-1}p\)p^2  &2&2&p^2\cdot a_p(f_{40.2.a.a})+p^2\cdot a_p(f_{200.2.a.b})+\(\frac {-5}p\)p^2\\ \hline
		\end{array}$$In the above Table,  the term  $\mathcal{J}(HD_1(c,f);\F_p)$ is defined by \eqref{eq:J}, and  the weights  $w(HD_1)$  and  $w(HD_2)$ are defined in \eqref{eq:w}.
	\end{theorem}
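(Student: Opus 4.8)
The plan is to prove Theorem \ref{thm:2} by feeding the geometric decomposition of Theorem \ref{thm:main} into the modularity already recorded for $HD_2$ in Theorem \ref{thm:HD2}, which isolates a single new two-dimensional constituent whose modularity must then be settled. Recall that for $n=6$ the representation $\rho_{HD_1(c,f),\ell}^{BCM}$ is five-dimensional and, by Theorem \ref{thm:KBCM}(iii), splits over $G_\Q$ as the four-dimensional $\rho_{HD_1(c,f),\ell}^{BCM,prim}$ (Frobenius eigenvalues of absolute value $p^{5/2}$) plus a one-dimensional complement $\rho_{HD_1(c,f),\ell}^{BCM,1}$. First I would restrict to $G(N(c,f))$ and apply Theorem \ref{thm:main}: the five-dimensional $\rho_{HD_1(c,f),\ell}$ becomes a two-dimensional module plus a three-dimensional module isomorphic to $\epsilon_\ell\otimes\rho_{HD_2(c,f),\ell}$. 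Splitting the latter into its primitive and one-dimensional parts and matching the one-dimensional pieces with $\rho_{HD_1(c,f),\ell}^{BCM,1}$, one obtains over $G(N(c,f))$
\[
\rho_{HD_1(c,f),\ell}^{BCM,prim}\big|_{G(N)}\;\cong\;\mathcal A_{c,f}\,\oplus\,\big(\epsilon_\ell\otimes\rho_{HD_2(c,f),\ell}^{BCM,prim}\big)\big|_{G(N)},
\]
where $\mathcal A_{c,f}$ is a new two-dimensional summand.

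Next I would descend this splitting to $G_\Q$. When $w(HD_1)\neq w(HD_2)$ the two constituents have different Hodge--Tate weights and so remain non-isomorphic over $G(N)$; the decomposition is then canonical and automatically $G_\Q$-stable. In the remaining pairs the two constituents become isomorphic over $G(N)$, and since $G(N)\triangleleft G_\Q$ with abelian quotient $\mathrm{Gal}(\Q(\zeta_N)/\Q)$, Clifford theory presents $\rho_{HD_1(c,f),\ell}^{BCM,prim}$ as $(\epsilon_\ell\otimes\rho_{HD_2(c,f),\ell}^{BCM,prim})\otimes W$ with $W$ a two-dimensional representation of the abelian quotient, hence $W=\eta_1\oplus\eta_2$ splits into characters and again yields two two-dimensional $G_\Q$-summands. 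In either case I would pin down the quadratic twists $\chi_{-1},\chi_{\pm3},\chi_5$ by comparing $\Tr$ at $\Frob_p$ via Theorem \ref{thm:KBCM}(ii); these account for the relations recorded in Theorem \ref{thm:HD2} (such as $a_p(f_{36.4.a.a})=\JS{-3}{p}a_p(f_{12.4.a.a})$) and produce the $\fg_{c,f}$ column together with the $p$-power factor from $\epsilon_\ell$. The modularity of this summand is then immediate from Theorem \ref{thm:HD2}, as modularity is preserved under twisting.

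The substance is the modularity of $\mathcal A_{c,f}$. Its trace at $\Frob_p$ equals $\Tr\rho_{HD_1(c,f),\ell}^{BCM,prim}(\Frob_p)-\Tr\big(\epsilon_\ell\otimes\rho_{HD_2(c,f),\ell}^{BCM,prim}\big)(\Frob_p)$, an integer computed from $H_p(HD_1(c,f))$ and $H_p(HD_2(c,f))$ through Theorem \ref{thm:KBCM}; the absolute-value bound forces motivic weight $5$, matching a twist $\epsilon_\ell^{\,j}$ of the Galois representation of a newform of weight $w(HD_1)$. For the pairs where $\mathcal A_{c,f}$ turns out to be a quadratic twist of $\epsilon_\ell\otimes\rho_{HD_2(c,f),\ell}^{BCM,prim}$ (the low-weight pairs, where $\mathcal A_{c,f}$ and its complement differ by a character trivial on $G(N)$), modularity follows again from Theorem \ref{thm:HD2}. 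For the genuinely new pieces (the weight-$6$ forms $f_{8.6.a.a},f_{4.6.a.a},f_{6.6.a.a}$ and the new weight-$4$ forms), I would verify that $\mathcal A_{c,f}$ is odd, irreducible, unramified outside $\ell M(c,f)$, and de Rham with distinct Hodge--Tate weights---all consequences of its realization inside the hypergeometric motive---and then invoke Serre's conjecture (Khare--Wintenberger) for the residual representation together with a modularity-lifting theorem; for those data that are in fact of CM type, one can instead exhibit $\mathcal A_{c,f}$ directly as induced from a Hecke character, using that the relevant hypergeometric sum decomposes into Jacobi sums. Finally, the weight and prime-to-$\ell$ conductor restrict to a finite list of newforms, and matching $a_p$ against the computed traces up to the Sturm bound (a Faltings--Serre comparison) identifies the unique $\ff_{c,f}$ in the table; adding back the explicit character $\rho_{HD_1(c,f),\ell}^{BCM,1}$, which contributes the final $\pm p^2$ term, gives the stated formula for $\Tr\rho_{HD_1(c,f),\ell}^{BCM}(\Frob_p)$.

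The main obstacle is twofold. The bookkeeping of the quadratic twists in the descent to $G_\Q$ must be exact, since a single character error would misidentify the newforms; this I would control by forcing agreement of traces at enough small primes. More seriously, establishing modularity of the genuinely new higher-weight $\mathcal A_{c,f}$ depends on proving irreducibility and the precise Hodge--Tate weights and on an applicable modularity-lifting input, and I expect the CM cases to be the cleanest to dispatch via their attached Hecke characters while the non-CM cases rely on the full strength of the Khare--Wintenberger theorem.
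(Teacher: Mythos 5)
Your overall strategy coincides with the paper's: feed the $G(N)$-decomposition of Theorem \ref{thm:main} into the modularity recorded in Theorem \ref{thm:HD2}, descend the splitting to $G_\Q$, establish oddness, irreducibility and crystallinity of the two $2$-dimensional pieces so that a modularity theorem applies (your Khare--Wintenberger-plus-lifting route is exactly what the paper packages as Theorem \ref{thm:modulairty}, applied in Proposition \ref{cor:1}), and identify the forms by trace matching against a finite list of newforms (for finiteness of that list the paper invokes Serre's level bound, Theorem \ref{thm:Serre}; you assert finiteness without a conductor-exponent bound, and likewise your claim that oddness and irreducibility are ``consequences of the realization inside the hypergeometric motive'' must in practice be replaced by the determinant computation and Lemma \ref{irreducible}). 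These are repairable imprecisions. The step where your argument genuinely breaks is the descent from $G(N)$ to $G_\Q$.

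Your dichotomy --- canonical splitting when $w(HD_1)\neq w(HD_2)$, isotypic restriction (hence Clifford tensor decomposition) ``in the remaining pairs'' --- fails for $(c,f)=(\f12,\frac16)$ and $(\frac15,\frac25)$. For these two pairs $w(HD_1)=w(HD_2)=4$, so the two constituents of $\rho_{HD_1(c,f),\ell}^{BCM,prim}|_{G(N)}$ have identical Hodge--Tate weights, yet they are \emph{not} isomorphic over $G(N)$. Indeed, for $(\f12,\frac16)$ the constituents are $\epsilon_\ell$-twists of the Galois representations of $f_{8.4.a.a}$ and $f_{24.4.a.a}$, and these newforms are not twists of one another by any character trivial on $G(12)$: such a character lies in $\{1,\chi_{-1},\chi_3,\chi_{-3}\}$, twisting the level-$8$ form by $\chi_{\pm3}$ forces $9$ to divide the level, and twisting by $\chi_{-1}$ yields a form of $2$-power level, so no twist has level $24$. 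Similarly for $(\frac15,\frac25)$: if $f_{50.4.a.d}$ were $f_{10.4.a.a}\otimes\chi_5$, the relation $a_p(f_{50.4.a.b})=\(\frac 5p\)a_p(f_{50.4.a.d})$ from Theorem \ref{thm:HD2} would give $a_p(f_{50.4.a.b})=a_p(f_{10.4.a.a})$ for almost all $p$, contradicting strong multiplicity one, and the remaining characters trivial on $G(20)$ are excluded by nebentypus or conductor considerations at $2$. Consequently, in these two cases neither branch of your argument applies: you face two non-isomorphic irreducible $G(N)$-constituents with the same Hodge--Tate data, and nothing in your proposal prevents the outer action of $G_\Q/G(N)$ from interchanging them; were they interchanged, $\rho_{HD_1(c,f),\ell}^{BCM,prim}$ would be irreducible over $G_\Q$ (induced from an index-$2$ subgroup) and the asserted decomposition would be false. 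The paper closes exactly this loophole in Proposition \ref{cor:1} using an ingredient you possess but never deploy in the descent: since $HD_2(c,f)$ extends to $\Q$ (Lemma \ref{lem:HD-Q}, Theorem \ref{thm:HD2}), the constituent $\bigl(\epsilon_\ell\otimes\rho_{HD_2(c,f),\ell}^{BCM,prim}\bigr)|_{G(N)}$ is the restriction of an honest $G_\Q$-representation, so its isotypic component inside $\rho_{HD_1(c,f),\ell}^{BCM}|_{G(N)}$ is conjugation-stable; the complement is then stable as well, and both summands extend to $G_\Q$ up to finite-order twist. With that repair, the rest of your outline proceeds as in the paper.
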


	As shown in (\ref{eq:BCMtrace}), $\Tr\, \rho_{HD_i(c,f),\ell}^{BCM}(\Frob_p)$ is expressed in terms of the finite field hypergeometric function $H_p$ introduced in \cite{BCM}, see \eqref{eq:hq} for its definition by Gauss sums. Its relationship to the $\P$-function is given in (\ref{eq:P-H}). Hence the above two theorems can be reformulated in terms of character sums. For example, 
	the case $(\f12,\frac13)$ is equivalent to, for $p>5$, 
	$$
	H_p\(HD_2(\f12,\frac13)\)= a_p(f_{36.4.a.a})+\(\frac 3p\)p 
	$$
	and 
	$$
	pH_p\(HD_1(\f12,\frac13)\)= a_p(f_{4.6.a.a})+pa_p(f_{36.4.a.a})+\(\frac 3p\)p^2.
	$$

	When $(c,f)=(\f12,\f12)$, the conclusion above is equivalent to a conjecture of Koike \cite{Koike92}, proved in \cite{FOP} by Frechette, Ono and Papanikolas. 
	Comparing Theorems \ref{thm:HD2} and \ref{thm:2}, we see that the extension  to $G_\Q$ of $\epsilon_\ell|_{G(M(c,f))} \otimes \rho_{HD_2(c,f),\ell}^{prim}$ in $\rho_{HD_1(c,f),\ell}^{BCM}$ differs from $\epsilon_\ell \otimes \rho_{HD_2(c,f),\ell}^{BCM,prim}$ by a character of $G_\Q/G(N(c,f))$. The same phenomenon occurs for the $1$-dimensional component. 
	\medskip
	
	For the question b), we see from the above table that only when $(c,f)=(\f12,\frac12), (\f12,\frac13),(\frac13,\frac13)$,  $\rho_{HD_1(c,f),\ell}^{BCM}$ could possibly have a unique unit root $\gamma_p$ at an  unramified prime $p$. 
	In each of these cases, there is a congruence relation modulo $p$ between a truncated hypergeometric series and the coefficients of $\ff_{c,f}$, see Theorem 4 of \cite{Long18}. Numerical computations suggest the following supercongruences for  primes $p>5$: 
	
	\begin{eqnarray*}
		(c,f) =(\f12,\f12): & \pFq65{\f12&\f12&\f12&\f12&\f12&\f12}{&1&1&1&1&1}{1}_{p-1}&\overset{?}\equiv a_p(f_{8.6.a.a})\mod p^5.\\
		(c,f)=(\f12,\frac13): &  p\cdot \pFq 65{\f12&\f12&\f12&\f12&\frac13&\frac23}{&\frac56&\frac76&1&1&1}{1}_{p-1}&\overset{?}\equiv a_p(f_{4.6.a.a}) \mod p^5.\\
		(c,f)=(\frac13,\frac13): &p^2\cdot \pFq 65{\f12&\frac13&\frac23&\f12&\frac13&\frac23}{&\frac56&\frac76&1&\frac56&\frac76}{1}_{p-1}&\overset{?}\equiv a_p(f_{6.6.a.a}) \mod p^4.
	\end{eqnarray*}  Note that these supercongruences hold for both ordinary and supersingular primes. 
	
	In the above formulas, each $_6F_5$ is multiplied by a suitable $p$ power to ensure the resulting product is $p$-adically integral. Of these, the first congruence is shown to hold mod $p^3$ by Osburn, Straub and Zudilin in \cite{OSZ}. It would be interesting to know the truth. The reader is referred to \cite{Kilbourn, Long18,LTYZ, McCarthy-RV,RR} for related discussions on hypergeometric supercongruences.

	Finally we provide an archimedean version of these congruences for $(c,f)=(\f12,\frac12)$ and $(\f12,\frac13)$. Namely we give precise descriptions of the untruncated hypergeometric values as periods of $\ff_{c,f}$, which addresses the question c).  Interestingly, for $(c,f)=(\f12,\frac16)$,  we find an expression of 
	$F(HD_1(c,f))$ in terms of a period of $f_{8.4.a.a}$, see \eqref{eq:7F6-f12-2}.

	\medskip
	
	Similar to the identification of $\ff_{c,f}$ at the archimedean place, we will also explain in the $(\f12,\f12)$ case the natural appearance of $\fg_{c,f}$ from the perspective of modular forms. 
	Meanwhile, numerical data suggest for any  prime $p>3$
	\begin{equation}\label{eq:7F(1)-1} 
	\pFq76{\f12&\frac54&\f12&\f12&\f12&\f12&\f12}{&\frac14&1&1&1&1&1}{1}_{p-1}\overset{?}\equiv p\cdot a_p(f_{8.4.a.a})\mod p^4.
	\end{equation}The next result  expresses the untruncated hypergeometric $_6F_5(1)$ and $_7F_6(1)$ values in terms of periods of modular forms $\ff_{\f12,\f12}$ and $\fg_{\f12,\f12}$ respectively. For a general frame work for periods in terms of integrals, see  \cite{K-Z} by Kontsevich and Zagier, in which the authors  pointed out that ``Periods appear surprisingly
	often in various formulas and  conjectures in mathematics, and often provide a bridge
	between problems  coming from different disciplines."

	\begin{theorem}\label{thm:period}The following two equalities hold:
		\begin{equation*} 
		\pFq65{\f12&\f12&\f12&\f12&\f12&\f12}{&1&1&1&1&1}{1} =16\oint_{|t_2(\tau)|=1} \tau^2 f_{8.6.a.a}\left (\frac{\tau}2\right)d\tau=16 \int_{1/2+i/2}^{-1/2+i/2} \tau^2 f_{8.6.a.a}\left (\frac{\tau}2\right) d\tau
		\end{equation*}
		and
		\begin{equation}\label{eq:7F6-f12-2}  
		\pFq76{\f12&\frac54&\f12&\f12&\f12&\f12&\f12}{&\frac14&1&1&1&1&1}{1}=\frac{32i}{\pi} \oint_{|t_2(\tau)|=1}\tau f_{8.4.a.a}(\frac{\tau}2)d\tau =\frac{32i}{\pi}  \int_{1/2+i/2}^{-1/2+i/2} \tau f_{8.4.a.a}(\frac{\tau}2) d\tau,
		\end{equation}where $ t_2(\tau)=-64\frac{\eta(2\tau)^{24}}{\eta(\tau)^{24}}$ is a Hauptmodul for $\Gamma_0(2)$, 
		and the path in $\tau$ such that $|t_2(\tau)|=1$ is the hyperbolic geodesic from $\frac{1+i}2$ to $\frac{-1+i}2$ going counter clockwise. 
	\end{theorem}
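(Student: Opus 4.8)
The plan is to exploit the fact that for $(c,f)=(\tfrac12,\tfrac12)$ the datum $HD_1$ is the fifth symmetric power of the rank-two local system attached to the Legendre family of elliptic curves, and that this family is uniformized by the Hauptmodul $t_2(\tau)=-64\,\eta(2\tau)^{24}/\eta(\tau)^{24}$ of $\Gamma_0(2)$. First I would record the modular parametrization $\lambda=\lambda(\tau)$ of the hypergeometric argument as a rational function of $t_2$, together with the weight-one period $\varpi(\tau)$, a solution of the second order Picard--Fuchs equation, so that $\tfrac{d\lambda}{d\tau}$ is expressed through $\varpi^2$ and a rational function of $t_2$. Clausen's formula and its symmetric-power analogues then identify the holomorphic Frobenius solutions of the sixth and fourth order hypergeometric equations at $\lambda=0$ with $\,{}_6F_5(\tfrac12,\dots;1,\dots;\lambda)$ and $\,{}_4F_3(\tfrac12,\dots;1,\dots;\lambda)$, while the associated newforms are the weight-six form $f_{8.6.a.a}$ and the weight-four form $f_{8.4.a.a}$ arising as the relevant constituents of $\rho_{HD_1(1/2,1/2),\ell}^{BCM}$ and $\rho_{HD_2(1/2,1/2),\ell}^{BCM}$ in Theorems \ref{thm:2} and \ref{thm:HD2}. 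The appearance of $\tau/2$ in the argument of the forms reflects the passage from the $\Gamma_0(2)$-uniformizer $t_2$ to the level-eight forms.

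For the first equality I would pass from the hypergeometric series, holomorphic at $\lambda=0$, to its value at the singular argument $\lambda=1$ by analytic continuation along the pullback under $\tau\mapsto\lambda(\tau)$ of the segment $[0,1]$. The two branch points $\lambda=0,1$ correspond to the CM points $\tau=\tfrac{1+i}{2}$ and $\tfrac{-1+i}{2}$ (each CM by $\mathbb{Z}[i]$), and the curve $|t_2(\tau)|=1$ is precisely the hyperbolic geodesic joining them, which accounts for the contour in the statement. Writing the central period pairing for the weight-six form via its Eichler integral, and using that the holomorphic solution is, up to the Jacobian factor $\tfrac{d\lambda}{d\tau}$, the fifth power $\varpi^5$, I would express $\,{}_6F_5(\tfrac12,\dots;1,\dots;1)$ as $16\int_{(1+i)/2}^{(-1+i)/2}\tau^2 f_{8.6.a.a}(\tfrac{\tau}{2})\,d\tau$, the index $j=2$ being the central one for weight six. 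The constant $16$ would be fixed by comparing leading behaviour at the cusp, anchored by the evaluation $\,{}_3F_2(\tfrac12,\tfrac12,\tfrac12;1,1;1)=\Gamma(\tfrac14)^4/(4\pi^3)$ of \cite{LLT}.

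For the second equality I would use the well-poised structure of the $_7F_6$: the factor $(\tfrac54)_k/(\tfrac14)_k=1+4k$ shows that $\,{}_7F_6(\tfrac12,\tfrac54,\tfrac12,\dots;\tfrac14,1,\dots;\lambda)=(1+4\theta)\,{}_6F_5(\tfrac12,\dots;1,\dots;\lambda)$ with $\theta=\lambda\tfrac{d}{d\lambda}$, equivalently Whipple's formula \eqref{eq:Whipple2} in the limit $e=\tfrac{1-p}{2}\to\tfrac12$ relating the $_7F_6$ to the $_4F_3$ governed by $f_{8.4.a.a}$. Since $\theta=\tfrac{\lambda}{d\lambda/d\tau}\tfrac{d}{d\tau}$ and $\tfrac{d}{d\tau}=2\pi i\,q\tfrac{d}{dq}$, applying $1+4\theta$ together with the Griffiths transversality of the $\mathrm{Sym}^5$ connection sends the weight-six central period to the weight-four central period $\oint \tau f_{8.4.a.a}(\tfrac{\tau}{2})\,d\tau$, the differentiation introducing factors proportional to $i/\pi$ and lowering the power $\tau^2$ to $\tau$ (the central index $j=1$ for weight four). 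The equality of the closed contour integral and the segment integral in \eqref{eq:7F6-f12-2} follows since the integrands are holomorphic along the geodesic and the endpoints are the same CM points.

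The main obstacle I expect is the rigorous interpretation of the hypergeometric value at the singular argument $\lambda=1$ as the stated period: one must control the analytic continuation to the branch point and its regularization, establish the precise identity between $\varpi^5$ (resp. its $\theta$-derivative) and the normalized newforms including all Jacobian factors, and pin down the exact constants $16$ and $\tfrac{32i}{\pi}$ together with the orientation of the geodesic. Justifying the differentiation/limit step for the second identity is delicate because both $\,{}_7F_6(1)$ and the Whipple right-hand side are only borderline convergent (the deformation $e=\tfrac{1-p}{2}$ was introduced precisely to keep the $\Gamma$-quotient $C$ finite), so the interchange of the limit $e\to\tfrac12$ with the period integration must be carried out carefully; I would anchor every normalization against the weight-three evaluation of \cite{LLT} and against direct numerical verification.
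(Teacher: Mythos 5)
Your proposal is missing the engine of the proof, and the construction you substitute for it does not work as stated. The paper's argument (following the Zagier trick recalled at the start of \S\ref{sec:mf}) rests on the constant-term identity
\[
{}_6F_5\!\left(\tfrac12,\tfrac12,\tfrac12,\tfrac12,\tfrac12,\tfrac12;1,1,1,1,1;1\right)
=\frac{1}{2\pi i}\oint_{|t|=1}F(t)\,F(1/t)\,\frac{dt}{t},
\qquad F(t):={}_3F_2\!\left(\tfrac12,\tfrac12,\tfrac12;1,1;t\right),
\]
valid because the coefficients of the ${}_6F_5$ are the squares of those of $F$. This converts the value at argument $1$ into a \emph{closed-contour} integral of a product of two rank-three solutions, one at $t$ and one at $1/t$. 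Substituting $t=t_2(\tau)$, Clausen plus theta identities give $F(t_2(\tau))=\theta_4(2\tau)^4$ and, via the Fricke relation $t_2(-1/2\tau)=1/t_2(\tau)$, $F(1/t_2(\tau))=16\tau^2\eta(2\tau)^8/\eta(\tau)^4$ — this is exactly where the polynomial factor $\tau^2$ comes from — and the integrand collapses to $16\cdot 2\pi i\,\tau^2 f_{8.6.a.a}(\tau/2)\,d\tau$. The ${}_7F_6$ identity is then the same contour integral with the operator $1+4\theta$ (your $(\tfrac54)_k/(\tfrac14)_k=1+4k$ observation, which the paper does use) applied to $F(1/t_2)$, evaluated by an explicit integration-by-parts computation with $E_2$ and theta constants, modulo exact differentials. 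Your plan — analytically continue the single function ${}_6F_5(\lambda)$ from $\lambda=0$ to $\lambda=1$ and read off the value as a period — produces one number attached to one branch at one point; it gives no mechanism for a closed-contour integral of $\tau^2 f_{8.6.a.a}(\tau/2)$ to appear, and no amount of care about ``regularization at the branch point'' supplies the missing convolution structure.

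The error shows concretely in your geometric description of the contour. Under $t_2$, the hypergeometric argument $0$ corresponds to the cusp $i\infty$ (since $t_2=-64q+O(q^2)$), not to a CM point, and \emph{both} endpoints $\frac{1+i}2$ and $\frac{-1+i}2$ of the geodesic lie over $t_2=1$ (they differ by the translation $\tau\mapsto\tau-1\in\Gamma_0(2)$). The geodesic $|t_2(\tau)|=1$ is the preimage of the unit circle traversed once, whereas the pullback of the segment $[0,1]$ is the vertical line $\mathrm{Re}\,\tau=\tfrac12$ from $i\infty$ down to $\frac{1+i}2$; these are different paths, so the contour in the theorem cannot be explained as your continuation path. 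Your structural premise is also false: $HD_1(\tfrac12,\tfrac12)$ is not $\mathrm{Sym}^5$ of the Legendre local system — the local exponents of the ${}_6F_5$ equation at $\lambda=1$ are $\{0,1,2,2,3,4\}$, while those of $\mathrm{Sym}^5$ of a system unipotent at $\lambda=1$ are all $0$. What is true, and what both the paper's analytic proof (via Clausen, $F=g^2$ with $g={}_2F_1(\tfrac14,\tfrac14;1;\cdot)$) and the Galois-theoretic Theorem \ref{thm:main} (via $\mathrm{Sym}^2\mathcal H\supset\mathrm{Sym}^4\rho$) exploit, is a $\mathrm{Sym}^4$ structure on the rank-two system coupled through a convolution/$H^1$, which is precisely what the contour integral realizes. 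Finally, two of your precautions are beside the point: Whipple's formula and the limit $e\to\tfrac12$ never enter this proof, so no interchange of limits needs justification, and fixing the constant $16$ by numerical anchoring against ${}_3F_2(1)=\Gamma(\tfrac14)^4/(4\pi^3)$ is a consistency check, not a derivation.
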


	We believe the appearance of $\fg_{c,f}$ in the $(\f12,\frac13)$ case can be explained in a similar way. This is left to any interested reader.

	\subsection{Geometric perspective}
	Given $k\in \C\smallsetminus \{0,\pm 1\}$, the 1st de Rham cohomology $H^1_{DR}(L(k)/\C)$ of the elliptic curve 
	$L(k):\quad y^2=(1-x^2)(1-k^2 x^2)$ is isomorphic to the differentials on $L(k)$ with at most a double pole at infinity (see \cite[Appendix 1]{Katz-padic} by Katz). It is a 2-dimensional vector space over $\C$ generated by  
	$\mathfrak w_1(k)=\frac {dx}{y}$ and $\mathfrak w_2(k)=(1-k^2x^2)\frac {dx}{y}$. Here $\mathfrak w_1(k)$ is holomorphic, which is unique up to scalar. Typically there is no a priori `functorial' choice for the second generator of $H_{DR}^1(L(k)/\C)$ unless $L(k)$ admits complex multiplication (CM), see \cite{Katz-padic}. 
	The two periods $$K(k)=\int_0^1 \mathfrak w_1(k)= \frac{\pi}2 \pFq21{\f12,\f12}{,1}{k^2}$$ and  $$E(k)=\int_0^1 \mathfrak w_2(k)=\frac{\pi}2 \pFq21{-\f12,\f12}{,1}{k^2}$$ are complete elliptic integrals of first and second kind respectively, see \cite [Chapter 1]{BB} by Borwein-Borwein. As $k$ varies, these two period functions are related by
	$$E(k) =k(1-k^2)\frac{d}{dk}K(k)+(1-k^2)K(k).$$
	
	In our situation, the hypergeometric functions $$F(\l):=F(\alpha_6(c,f),\beta_6(c,f);\l)$$  and  $$_7F_6(\l):=\pFq{7}{6}{\f12&\frac 54&c&1-c&\frac{1}2&f&1-f}{&\frac14& \frac32-c&
		\f12+c&1&\frac32-f&\f12+f}{\l}=4\frac{d }{d\l}F(\l)+F(\l)$$ play roles parallel to $K(k)$ and $E(k)$, respectively. Geometrically, for any given primitive pair $\alpha,\beta$ of length $n$ and $\l\in \overline {\Q}^\times$, one can write down (using the inductive formula  via Euler integral (see \cite[\S 2.1]{Slater}) for $_nF_{n-1}$) an affine equation, which defines a variety $V(\l):=V_{\alpha,\beta}(\l)$. The surface $\mathcal A_\l$ in \S\ref{sec:example}  is such an example. The variation of the de Rham cohomology  $H_{DR}^{n-1}(V(\l)/\C)$ can be described by the hypergeometric differential equation satisfied by $F(\alpha,\beta;\l)$. By the Hodge filtration, we have  $H_{DR}^{n-1}(V(\l))\cong \bigoplus_{i=0}^{n-1} H^i(V(\l),\Omega_{V(\l)}^{n-1-i})$. For example,  $H^0(V(\l),\Omega_{V(\l)}^{n-1})$ consists of holomorphic differential $n-1$st forms on $V(\l)$. 
	For $HD_1(c,f)=\{\alpha_6(c,f),\beta_6(c,f); 1\}$ with $(c,f)$ in \eqref{eq:7pair},  $\dim H^0(V_{\alpha_6(c,f),\beta_6(c,f)}(1),\Omega^5)=1$ only occurs  for the first three cases of Theorem \ref{thm:2}.

	From this perspective, Whipple's formula \eqref{eq:Whipple2} suggests that when $\l=1$, up to a Tate twist, there is a correspondence between $H_{DR}^5(V_{\alpha_6(c,f),\beta_6(c,f)}(1))$ and  $H_{DR}^3(V_{\alpha_4(c,f),\beta_4(c,f)}(1))$.  Firstly, such a correspondence only exists for special fibers like $\l=1$, similar to $H_{DR}^1(L(k))$ admitting a canonical splitting  only when $L(k)$ has CM. Secondly, such a correspondence, when it exists, can be recognized simultaneously from different aspects in compatible ways, including the statement of Whipple's formula \eqref{eq:Whipple2}, the statements of Theorems \ref{thm:main} and \ref{thm:2}, the conjectural supercongruence \eqref{eq:7F(1)-1}, as well as \eqref{eq:7F6-f12-2}. In particular, the factor $\frac1{\pi}$ on the right hand side of \eqref{eq:7F6-f12-2} and the $p$ on the right hand side of \eqref{eq:7F(1)-1} both reflect a Tate twist by 1. \footnote{The Legendre relation for $L(k)$ is $E(k)K(\sqrt{1-k^2})+K(k)E(\sqrt{1-k^2})-K(k)K(\sqrt{1-k^2})=\frac\pi 2$, which is a  pairing between $H^0(L(k),\Omega_{V(1)})$ and $H^1(L(k), \mathcal O_{V(1)})$.}
	
	Finally we comment on the role of Theorem \ref{thm:period}. From transcendence theory, given a normalized weight-$k$ cusp form $\ff(\tau)$, the values $\oint_{\gamma}\ff(\tau)p(\tau) d\tau$ for all polynomials   $p(\tau)\in \overline \Q[\tau]$ of degree at most $k-1$ and all closed paths $\gamma$ on the underlying modular curve generate a finite dimensional vector space over $\overline \Q(\pi)$. Thus the equalities in Theorem \ref{thm:period} are not coincidental by nature. On the other hand, from the perspective of de Rham cohomology, one expects that untruncated hypergeometric values arise from line integrals of some differentials in $H_{DR}^{n-1}(V(\l))$. 
	The challenge becomes  how to realize hypergeometric values as periods.
	
	\medskip
	
	This paper is organized in the following way. In \S \ref{ss:Preliminaries}, we recall basic notation, relevant results for hypergeometric data, character sums, liftings of Katz representations, 
	and modular forms. Section \ref{ss:Galois} is devoted to the proofs of Theorems \ref{thm:LLT2main}, \ref{thm:main}, \ref{thm:HD2} and \ref{thm:2}. In \S \ref{sec:mf}, Theorem \ref{thm:period} and other computations for untruncated hypergeometric functions are obtained using properties of modular forms and hypergeometric functions.  It is worth pointing out that in the proofs of Theorems \ref{thm:LLT2main} and \ref{thm:period}, the classical Clausen formula (\ref{eq:gen-Clausen}) and its finite field analogue (Theorem \ref{thm:E-G}) play a prominent role. The finite field version of the well-posedness of the $_6F_5(HD_1)$ under consideration gives rise to Lemma \ref{lem:6P5(1)}, using which we decompose the representation $\rho_{HD_1,\ell}$.

	\section{Preliminaries}\label{ss:Preliminaries}
	
	\subsection{Hypergeometric functions over finite fields}\label{ss:HG-FF}

	We recall the period functions for finite fields with odd characteristic. All multiplicative characters $A$ are extended to $0$ by letting $A(0) = 0$. In what follows,  denote by $\eps$ the trivial character, by $\phi_p$ or simply $\phi$ the quadratic character, and by $\overline A$ the inverse of the character $A$. For any characters  $A_i$, $B_i$, $i=1,\cdots,n+1$ with $B_1=\eps$ in $\widehat{\F_q^\times}$, the $_{n+1}\P_n$-function is defined as follows: set
	$$
	\pPPq 10{A_1}{\,}{\l;q}:=\overline {A}_1(1-\l)
	$$
	and construct $_{n+1}\P_n$ inductively by
	\begin{equation}\label{eq:P-by-induction}
	\pPPq{n+1}n{A_1& A_2&\cdots &A_{n+1}}{&  B_2&\cdots &B_{n+1}}{\lambda;q}
	: =   \sum_{x\in\F_q} A_{n+1}(x)\overline A_{n+1} B_{n+1}(1-x) \cdot
	\pPPq{n}{n-1}{A_1& A_2&\cdots &A_{n}}{&  B_2&\cdots &B_{n}}{\lambda x;q}.
	\end{equation}
	For simplicity, we may leave out the $q$ in the notation when there is no danger of 
	ambiguity in the underlying finite field $\F_q$.
	
	This $\P$-function is defined as an analog of the integral expression of the classical hypergeometric series. See \cite{Greene, Win3X}. 
	Note that for any character $A$ in $\widehat{\F_q^\times}$,   we have
	$$
	\overline {A}(1-\l)=\delta(\l)+\frac1{q-1}\sum_{\chi\in \widehat{\F_q^\times}}J( A\chi,\ol \chi)\chi(-\l)=\delta(\l)+\frac{-1}{q-1}\sum_{\chi\in \widehat{\F_q^\times}}\binom{A\chi}{\chi}\chi(\l),
	$$
	where
	$$  \delta(t):=
	\begin{cases}
	1,& \mbox{ if } t=0, \\
	0,& \mbox{ otherwise, } \end{cases} \quad  \quad J(A, B):=\sum_{t\in \F_q}A(t)B(1-t), \quad \text{and } \displaystyle\CC AB :=-B(-1)J(A,\ol B).
	$$
	Thus, we can rewrite the $\P$-function in terms of the finite field version of binomial coefficients $\binom{A}{B}$ defined above (see \cite[Theorem 3.6]{Greene} by Greene for example):
	\begin{multline}\label{eq:PP}
	\pPPq{n+1}n{A_1& A_2&\cdots &A_{n+1}}{&  B_2&\cdots &B_{n+1}}{\lambda;q}\\
	=\frac{(-1)^{n+1}}{q-1}\cdot \left(\prod_{i=2}^{n+1} A_iB_i(-1) \right)\cdot
	\sum_{\chi\in \widehat{\F_q^\times}}\CC{A_1\chi}{\chi} \CC{A_2\chi}{B_2\chi}\cdots \CC{A_{n+1}\chi}{B_{n+1}\chi}\chi(\l)
	+ \delta(\lambda)\prod_{i=2}^{n+1} J(A_{i},\overline{A_{i}}B_i).
	\end{multline}

	%%%%%%%%%%%%%%%%%%FF Kummer
	Here are some properties of these character sums that are useful for our later discussion. 
	\begin{prop}[Theorem 4.2, \cite{Greene}]\label{prop: Kummer}
		For any fixed  $\F_q$ of characteristic $p>2$,    $A_i$, $B_j\in \widehat{\F_q^\times}$, and    $t\neq 0$,
		\begin{multline*}
		\pPPq{n}{n-1}{A_1&A_2&\cdots& A_{n}}{&B_2&\cdots&B_n}{\frac1 t}=A_1(-t)\left(\prod_{i=2}^nA_iB_i(-1)\right)\cdot\pPPq{n}{n-1}{A_1&A_1\overline{B_2}&\cdots& A_1\overline{B_n}}{&A_1\overline{A_2}&\cdots&A_1\overline{A_n}}{t}\\
		= A_2(t)\left(\prod_{i=3}^nA_iB_i(-1)\right)\cdot\pPPq{n}{n-1}{A_2\ol B_2&A_2& A_2\overline{B_3}&\cdots& A_2\overline{B_n}}{&A_2\overline{A_1}&A_2\ol{A_3}&\cdots& A_2 \overline{A_n}}{t}.
		\end{multline*}
	\end{prop}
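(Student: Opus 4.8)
The plan is to reduce everything to the explicit Gauss–Jacobi-sum expansion \eqref{eq:PP} and to verify the identities coefficient-by-coefficient as equalities of functions of $t$. Since $t\neq 0$, both $1/t$ and $t$ are nonzero, so every $\delta$-term in \eqref{eq:PP} vanishes and each $\P$-function occurring is purely the character sum
$\frac{(-1)^n}{q-1}\bigl(\prod_{i=2}^n A_iB_i(-1)\bigr)\sum_{\chi}\prod_{i=1}^n\CC{A_i\chi}{B_i\chi}\,\chi(\cdot)$, with the convention $B_1=\eps$. On the left side the argument $1/t$ enters only through $\chi(1/t)=\ol\chi(t)$, so after reindexing $\chi\mapsto\ol\chi$ the left side becomes $\frac{(-1)^n}{q-1}\bigl(\prod_{i=2}^n A_iB_i(-1)\bigr)\sum_{\chi}\prod_{i=1}^n\CC{A_i\ol\chi}{B_i\ol\chi}\,\chi(t)$. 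On the right side the scalar $A_1(-t)=A_1(-1)A_1(t)$ contributes a character $A_1(t)$; absorbing it by the shift $\chi\mapsto A_1\chi$ of the summation index turns the right side into a sum over $\chi(t)$ as well. It then suffices to match the coefficient of each $\chi(t)$, and this I would prove directly (so no appeal to linear independence of characters is even needed to conclude).

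The single tool I expect to do all the work is the reflection identity $\CC{A}{B}=AB(-1)\,\CC{\ol B}{\ol A}$, valid for all characters $A,B$. It is immediate from $\CC{A}{B}:=-B(-1)J(A,\ol B)$ together with $J(A,\ol B)=J(\ol B,A)$ and $\ol A(-1)=A(-1)$; note it holds even when some character is trivial, since it relies only on the symmetry of the Jacobi sum. Applying it factorwise to the reindexed left summand gives $\CC{A_1\ol\chi}{\ol\chi}=A_1(-1)\CC{\chi}{\ol{A_1}\chi}$ for the distinguished ($B_1=\eps$) column and $\CC{A_i\ol\chi}{B_i\ol\chi}=A_iB_i(-1)\CC{\ol{B_i}\chi}{\ol{A_i}\chi}$ for $i\ge 2$. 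On the other side, after the shift $\chi\mapsto A_1\chi$ the transformed parameters $A_1'=A_1,\ A_i'=A_1\ol{B_i},\ B_i'=A_1\ol{A_i}$ collapse so that the $i$-th factor of the middle $\P$-function becomes exactly $\CC{\chi}{\ol{A_1}\chi}$ (for $i=1$) and $\CC{\ol{B_i}\chi}{\ol{A_i}\chi}$ (for $i\ge 2$). Thus the $\chi$-dependent products agree termwise, and it remains only to see that the prefactors match: this holds because $A_i'B_i'(-1)=A_iB_i(-1)$ for the transformed data, and because the leftover factors $A_1(-1)$ and $\prod_{i\ge2}A_iB_i(-1)$ produced by the reflections reproduce the constant in \eqref{eq:PP}, using $\bigl(\prod_{i\ge2}A_iB_i(-1)\bigr)^2=1$. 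This establishes the first equality.

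For the second equality I would run the identical argument with $A_2$ as the pivot. The reflection identity again converts the reindexed left summand, and after the shift $\chi\mapsto A_2\chi$ the transformed parameters $A_1''=A_2\ol{B_2},\ A_2''=A_2,\ B_2''=A_2\ol{A_1},\ A_i''=A_2\ol{B_i},\ B_i''=A_2\ol{A_i}\ (i\ge3)$ produce precisely the same $\chi$-dependent product $\CC{\chi}{\ol{A_1}\chi}\prod_{i\ge2}\CC{\ol{B_i}\chi}{\ol{A_i}\chi}$. Hence the middle and last expressions both equal the left side and therefore each other. The prefactor check is the analogue of the previous one, now using $\prod_{i=2}^n A_i''B_i''(-1)=A_1(-1)\prod_{i\ge3}A_iB_i(-1)$; note the scalar here is $A_2(t)$ rather than $A_2(-t)$, which is consistent because in the target data the $\eps$-column no longer carries $A_2$.

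I expect the only genuine difficulty to be bookkeeping rather than anything structural: one must correctly convert the $t$-dependence of the prefactors $A_1(-t)$ and $A_2(t)$ into shifts of the summation character so that the comparison becomes a bona fide termwise identity in $\chi$, and then track all sign and root-of-unity factors through the reflections. The structural reason the identity comes out so cleanly — and why no Gauss-sum multiplication or Hasse–Davenport-type relation is needed — is that $\CC{A}{B}$ and $\CC{\ol B}{\ol A}$ involve the \emph{same} Jacobi sum $J(A,\ol B)=J(\ol B,A)$, so the columnwise reflection merely reweights each column by a $\pm1$ factor while carrying the data $(A_i,B_i)$ to $(A_i',B_i')$ without introducing any new sums.
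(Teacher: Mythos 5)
Your proposal is correct. Note that the paper itself gives no proof of this proposition: it is imported as Theorem 4.2 of Greene's paper \cite{Greene}, so the comparison is with a citation rather than with an in-paper argument. Your self-contained verification goes through: since $t\neq 0$, the $\delta$-terms in \eqref{eq:PP} vanish on both sides; the reflection identity $\CC{A}{B}=AB(-1)\,\CC{\ol B}{\ol A}$ is indeed valid for all characters, trivial ones included, as it uses only $J(A,\ol B)=J(\ol B,A)$ and $A(-1)=\pm 1$; the reindexing $\chi\mapsto\ol\chi$ converts $\chi(1/t)$ into $\chi(t)$; and the shifts of the summation character by $\ol{A_1}$ (resp.\ $\ol{A_2}$) exactly absorb $A_1(t)$ (resp.\ $A_2(t)$). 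The leftover sign $A_1(-1)$ then appears on both sides: on the left from reflecting the $\eps$-column, $\CC{A_1\ol\chi}{\ol\chi}=A_1(-1)\CC{\chi}{\ol{A_1}\chi}$; on the right either from splitting $A_1(-t)=A_1(-1)A_1(t)$ (first equality) or from $A_2''B_2''(-1)=A_2\cdot A_2\ol{A_1}(-1)=A_1(-1)$ inside the constant of \eqref{eq:PP} (second equality). I checked the sign bookkeeping you flagged as the only delicate point: $\prod_{i\ge2}A_i'B_i'(-1)=\prod_{i\ge2}A_iB_i(-1)$ since $A_1^2(-1)=1$ and $\ol{A_iB_i}(-1)=A_iB_i(-1)$; likewise $\prod_{i\ge2}A_i''B_i''(-1)=A_1(-1)\prod_{i\ge3}A_iB_i(-1)$; and the squared $\pm1$ products collapse as claimed. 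Your argument is in the same spirit as Greene's original proof, which rests on his binomial-coefficient reflection identity and analogous summation shifts, so what your write-up buys is a proof of the proposition entirely within this paper's framework, requiring nothing beyond \eqref{eq:PP} and the symmetry of the Jacobi sum.
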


	This proposition is comparable to the following classical result. 
	\begin{prop}
		Given $\alpha=\{a_1,a_2,\cdots, a_n\}$, $\beta=\{1,b_2, b_3, \cdots, b_n\}$, if we  denote $1+a_j-\beta:= \{a_j,1+a_j-b_2,\cdots,1+a_j-b_n\}$ and $1+a_j-\alpha:= \{1+a_j-a_1,\cdots,1+a_j-a_n\}$ then
		the functions
		$F(\alpha,\beta; z )$ and $(-z)^{-a_j}F(1+a_j-\beta,1+a_j-\alpha;1/z)$ for any $j=1,\ldots, n$, satisfy the same differential equation
		$$
		\left[\theta\(\theta+b_2-1\)\cdots \(\theta+b_n-1\) - z\(\theta+a_1\)\cdots \(\theta+a_n\) \right]F=0, \quad \text{where } \theta=z\frac{d}{dz}.
		$$
	\end{prop}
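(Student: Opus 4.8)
The plan is to verify directly that both families of functions are annihilated by the hypergeometric differential operator
$$
L := \theta(\theta+b_2-1)\cdots(\theta+b_n-1) - z(\theta+a_1)\cdots(\theta+a_n), \qquad \theta = z\frac{d}{dz}.
$$
For the first function $F(\alpha,\beta;z)$, I would compute the action of $\theta$ on the series $\sum_k \frac{(a_1)_k\cdots(a_n)_k}{(1)_k(b_2)_k\cdots(b_n)_k}z^k$ term by term. Since $\theta z^k = k z^k$, the operator $\theta + b_j - 1$ multiplies the $k$-th coefficient by $(k+b_j-1)$, and similarly $\theta + a_i$ contributes $(k+a_i)$. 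The key computation is to show that the two monomial families match after a shift of index: applying the first product to the $k$-th term and the second product (with the extra factor $z$, which shifts $k \mapsto k+1$) to the $(k-1)$-th term produces the same monomial $z^k$. Comparing coefficients reduces to the Pochhammer recursion $(a)_{k} = (a)_{k-1}(a+k-1)$, which makes $L\,F = 0$ a formal identity.

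**For the second family** $(-z)^{-a_j}F(1+a_j-\beta,\,1+a_j-\alpha;\,1/z)$, the cleanest route is to exploit the symmetry of $L$ under $z \mapsto 1/z$ combined with the substitution $w = z^{-a_j}$. First I would record how $\theta$ transforms: under $z \mapsto 1/z$ one has $\theta \mapsto -\theta$, and conjugating by the factor $(-z)^{-a_j}$ shifts $\theta \mapsto \theta - a_j$. Tracking both operations through the operator $L$, the product $\theta(\theta+b_2-1)\cdots(\theta+b_n-1)$ and the product $(\theta+a_1)\cdots(\theta+a_n)$ get interchanged (up to the relabeling $a_i \leftrightarrow b_i$ induced by the parameter substitution $\alpha \mapsto 1+a_j-\beta$, $\beta \mapsto 1+a_j-\alpha$), so that $L$ in the $z$-variable pulls back to a scalar multiple of the hypergeometric operator $L'$ for the transformed data in the $1/z$-variable. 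Since $F(1+a_j-\beta,1+a_j-\alpha;\cdot)$ is by construction a solution of $L'$, the pullback is annihilated, which gives the claim.

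**The main obstacle** I expect is the careful bookkeeping in the conjugation step: one must verify that the prefactor $(-z)^{-a_j}$ is chosen with exactly the right exponent so that the two degree-$n$ products swap cleanly, with the indicial shifts landing on the correct parameters, and that the normalization $b_1 = 1$ (equivalently the absence of a pole discrepancy at $z=0$) is preserved under the transformation. A convenient way to sidestep index errors is to check the statement first at the level of the indicial equation at $z=\infty$ — the local exponents of $L$ at $\infty$ are exactly $a_1,\dots,a_n$, which explains the prefactor $(-z)^{-a_j}$ — and then promote this to the full operator identity by the $z \mapsto 1/z$ symmetry. Since all $n$ choices of $j$ are handled by the same computation with $a_j$ in the role of the distinguished exponent, no separate argument is needed for different $j$.
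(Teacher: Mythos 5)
Your proposal is correct; the main thing to report is that the paper contains no proof to compare it with. The proposition is stated as the classical counterpart of the finite-field Kummer relation (Proposition \ref{prop: Kummer}) and is in effect quoted from the literature: its $n=3$ instance reappears as property (P.1) in \S\ref{sec:mf} with the citation \cite[(16.8.7)]{DLMF}. So your argument fills in a verification the authors omit, and it does so soundly. The first half is the standard recursion: the coefficient of $z^k$ in $LF$ is $k(k+b_2-1)\cdots(k+b_n-1)c_k-(k-1+a_1)\cdots(k-1+a_n)c_{k-1}$, which vanishes by $(a)_k=(a)_{k-1}(a+k-1)$. For the second half, the two transformation rules you state are correct and suffice: conjugation by $(-z)^{-a_j}$ replaces $\theta$ by $\theta-a_j$ (the constant $(-1)^{-a_j}$ is irrelevant), and $w=1/z$ gives $\theta_z=-\theta_w$. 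Pushing $L$ through both operations, $\theta\prod_{i=2}^{n}(\theta+b_i-1)$ becomes $(-1)^n(\theta_w+a_j)\prod_{i=2}^{n}(\theta_w+1+a_j-b_i)$, while $z\prod_{i=1}^{n}(\theta+a_i)$ becomes $(-1)^n w^{-1}\theta_w\prod_{i\neq j}(\theta_w+a_j-a_i)$, the $i=j$ factor degenerating to $\theta_w$; hence the pulled-back operator equals $(-1)^{n+1}w^{-1}$ times the hypergeometric operator for the datum $1+a_j-\beta$, $1+a_j-\alpha$ in the variable $w$, and that operator annihilates $F(1+a_j-\beta,1+a_j-\alpha;w)$ by the first half. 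Two small precisions: the multiplier is the invertible function $(-1)^{n+1}w^{-1}$, not a scalar as you wrote (immaterial for kernels); and the reason the first half applies to the transformed datum is that $1+a_j-\alpha$ contains the entry $1$ (from $i=j$) while $1+a_j-\beta$ contains $a_j$ (from $b_1=1$), so the normalized form is preserved --- exactly the point you flagged as needing care. Your indicial-exponent observation at $z=\infty$ is a reasonable sanity check for the exponent $-a_j$, but by itself it would not ``promote'' to the full operator identity; the conjugation computation is the proof, and it closes completely.
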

	
	%%%%%%%%%%%%%%%%%%%%FF Clausen
	
	Next we recall a version of the classical Clausen formula (\cite[pp 184]{AAR}): 
	
	\begin{equation}\label{eq:gen-Clausen}
	\pFq21{a&b}{&a+b+\f12}{x}\pFq21{\f12-a&\f12-b}{&\frac32-a-b}{x}=\pFq32{\f12&a-b+\f12&b-a+\f12}{&a+b+\f12&\frac32-a-b}{x}.
	\end{equation}
	Below is the Clausen formula  over finite fields obtained  by Evans and Greene. 
	
	\begin{theorem}[Evans-Greene \cite{Evans-Greene}]\label{thm:E-G} 
		Assume $\eta,K\in \widehat{\F_q^\times}$ such that none of $\eta, K\phi,\eta K, \eta \ol K$ is trivial.  Suppose $\eta K = S^2$ is a square. When $t\neq 0,1$, we have 
		\begin{align*}
		\pPPq32{\phi&\eta&\ol \eta}{&K&\ol K}{t}&= \phi(1-t)\left (\phi(t-1)K(t) \frac{J(\phi S, \phi K \ol S)}{J(S, K\ol S)}  \pPPq21{\phi K \ol S& S}{&K}t^2 -  q \right) \\
		&=  \phi(1-t)\left ( \pPPq21{\phi K \ol S& S}{&K}t  \pPPq21{\phi \ol K S & \ol S}{& \ol K}t -  q \right),
		\end{align*} where
		\begin{equation}\label{eq:Grossen-eta}
		\pPPq21{\phi K \ol S& S}{&K}t=\ol K(t)\phi(t-1)\frac{J(S, K\ol S)}{J(\phi K\ol S, \phi S)}   \pPPq21{\phi \ol K S& \ol S}{&\ol K}t.
		\end{equation}
		When $t=1$, we have
		\begin{multline}\label{eq:3P2(1)}
		\pPPq32{\phi&\eta&\ol \eta}{&K&\ol K}{1} 
		=\phi\eta(-1)q\frac{J(\eta K,\ol \eta K)J(\phi,\phi K)}{J(\phi S, \ol K)J(S, \ol K)}\(\frac{J(S,\phi \ol S)}{J(S\ol\eta, \phi\ol S\eta)}+\frac{J(\phi S,\ol S)}{J(\eta\ol S,\phi S\ol\eta)}\)\\
		=\frac{J(\eta K,\ol \eta K)}{J(\phi, \ol K)}\(J(S \ol K, \phi\ol S)^2 + J(\phi S\ol K, \ol S)^2\).
		\end{multline}
		
		In particular, when $K=\eps$, this gives
		\begin{equation}\label{eq:Clausen-K=1}
		\pPPq32{\phi&\eta&\ol \eta}{&\eps&\eps}{1}= J(S,\phi\ol S)^2+ J(\ol S,\phi S)^2.
		\end{equation}   
		When $\eta K$ is not a square, we have 
		\begin{equation}\label{eq:not-square}
		\pPPq32{\phi&\eta&\ol \eta}{&K&\ol K}{1} =0.
		\end{equation} 
	\end{theorem}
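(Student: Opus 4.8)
The plan is to reduce each $_3P_2$ and $_2P_1$ appearing in the statement to a weighted character sum in Gauss and Jacobi sums, and then to verify the asserted identities by sum manipulations; the finite-field analogue of the Legendre duplication formula, namely the Hasse--Davenport product relation $g(\chi)g(\chi\phi)=\chi(4)^{-1}g(\phi)\,g(\chi^2)$, is what forces the ``square-root'' character $S$ with $\eta K=S^2$ to appear (just as the classical proof of \eqref{eq:gen-Clausen} rests on the duplication formula). Unwinding \eqref{eq:P-by-induction} gives the Euler-type representation
$$
\pPPq21{A&B}{&C}{t}=\sum_{x\in\F_q}B(x)\,\ol B C(1-x)\,\ol A(1-tx),
$$
and \eqref{eq:PP} rewrites it as a single sum over $\widehat{\F_q^\times}$ with summand $\CC{A\chi}{\chi}\CC{B\chi}{C\chi}\chi(t)$ (plus a degenerate term). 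Substituting $\CC XY=-Y(-1)J(X,\ol Y)$ and $J(X,Y)=g(X)g(Y)/g(XY)$, with the standard corrections when a character is trivial, turns every $_2P_1$, and likewise $\pPPq32{\phi&\eta&\ol\eta}{&K&\ol K}{t}$, into a Gauss-sum-weighted character sum. The reflection identity \eqref{eq:Grossen-eta} is cheap and I would dispatch it first: the top/bottom characters $\phi\ol K S,\ol S,\ol K$ of its right-hand $_2P_1$ are exactly the inverses of $\phi K\ol S, S, K$, so \eqref{eq:Grossen-eta} is the explicit proportionality between a $_2P_1$ and its character-conjugate, provable directly from the Euler representation by a Pfaff-type change of variable as in Proposition~\ref{prop: Kummer}.

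The core step, and the main obstacle, is the product decomposition at generic $t$. I would expand the product of the two $_2P_1(t)$ Gauss-sum sums into a double sum over a pair of characters $\chi,\psi$, observe that the argument enters only through $(\chi\psi)(t)$, set $\rho=\chi\psi$, and collapse the inner sum by the finite-field convolution/beta identity (character orthogonality together with $\sum_y X(y)Y(1-y)=J(X,Y)$). This reduces the double sum to a single sum in $\rho$ whose weight must be massaged into the $_3P_2$ weight $\CC{\phi\rho}{\rho}\CC{\eta\rho}{K\rho}\CC{\ol\eta\rho}{\ol K\rho}$; it is precisely here that consolidating products of the form $g(\rho S)g(\rho S\phi)$ by Hasse--Davenport is indispensable, and this is what both creates the single $_3P_2$ and accounts for the overall factor $\phi(1-t)$ together with the additive constant $-q$, the latter arising from the degenerate terms where an intermediate summation character becomes trivial. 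The first displayed form of the answer then follows from the product form by invoking \eqref{eq:Grossen-eta} once to trade one $_2P_1(t)$ factor, together with a quadratic transformation passing to the single $_2P_1(t^2)$. I expect the bookkeeping of these degenerate characters---pinning down exactly which terms assemble into $\phi(1-t)$ and which into $-q$---to be the most delicate and error-prone part.

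For the value at $t=1$ I would specialize the generic identity and replace each $\pPPq21{\cdots}{\cdots}{1}$ by its closed Jacobi-sum evaluation (the finite-field Gauss summation theorem, read off from the Euler representation as a single Jacobi sum); one more application of Hasse--Davenport to the resulting products of Gauss sums yields \eqref{eq:3P2(1)}. Setting $K=\eps$ then collapses several Jacobi sums to $-1$ and gives the clean form \eqref{eq:Clausen-K=1}.

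Finally, when $\eta K$ is not a square there is no character $S$ available, and I would prove \eqref{eq:not-square} by a symmetrization of the single-sum form of $\pPPq32{\phi&\eta&\ol\eta}{&K&\ol K}{1}$: under a suitable involution of the summation character the Gauss-sum weight acquires the sign $\phi(\eta K)$, so the symmetrized sum carries a factor proportional to $1+\phi(\eta K)$, which vanishes precisely when $\eta K$ is a non-square.
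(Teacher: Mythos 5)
Your plan takes a genuinely different route from the paper's proof, which does not rederive the finite-field Clausen theorem at all: the paper obtains the generic-$t$ identity by the character shift $\chi\mapsto K\chi$ followed by a citation of Theorem 1.5 of \cite{Evans-Greene}, quotes \eqref{eq:Grossen-eta} from Proposition 8.7(2) of \cite{Win3X}, and quotes Theorem 4.38 of \cite{Greene} (equivalently (7.2) of \cite{Win3X}) for the $t=1$ statements. A from-scratch derivation in your style is in principle what those sources do, but your proposal contains a step that fails outright: you propose to obtain \eqref{eq:3P2(1)} by ``specializing the generic identity'' to $t=1$. Over $\F_q$ there is no continuity to invoke, and worse, the generic right-hand side carries the prefactor $\phi(1-t)$, which under the convention $A(0)=0$ equals $\phi(0)=0$ at $t=1$; literal specialization would therefore force the left side of \eqref{eq:3P2(1)} to vanish identically, contradicting \eqref{eq:Clausen-K=1} --- for instance with $\eta=\phi$, $K=\eps$ and $q=p\equiv 1\bmod 4$ that value equals $\P\left(\{\f12,\f12,\f12\},\{1,1,1\};1;\F_p\right)$, which by Ahlgren's theorem quoted in the paper is the $p$-th coefficient of a weight-$3$ CM form and is nonzero. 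The $t=1$ evaluation has a qualitatively different shape precisely because the degenerate terms in the character-sum bookkeeping are different there; it requires its own argument, which is exactly why both the statement and the paper's proof treat $t=1$ separately.

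Two further problems. First, you misread the first displayed formula: the exponent $2$ there squares the value of the ${}_2\mathbb{P}_1$ at $t$; it is not the ${}_2\mathbb{P}_1$ evaluated at $t^2$. Hence there is no ``quadratic transformation passing to ${}_2P_1(t^2)$'' to prove (and the statement you posit is not true in that form); the first display follows from the second purely by one application of \eqref{eq:Grossen-eta} together with $\phi(1-t)\phi(t-1)=\phi(-1)$ and the symmetry $J(\phi K\ol S,\phi S)=J(\phi S,\phi K\ol S)$. Second, your core step hides the real content of the theorem: because of the twist $\phi(1-t)$, the product of the two ${}_2\mathbb{P}_1(t)$'s is not matched to the ${}_3\mathbb{P}_2$ by a plain Mellin convolution, and after setting $\rho=\chi\psi$ the inner sum over $\chi$ is a sum of products of four Jacobi sums --- itself a ${}_3F_2(1)$-type character sum --- whose evaluation is a bona fide summation theorem rather than a consequence of orthogonality, $J(X,Y)=\sum_y X(y)Y(1-y)$, and Hasse--Davenport as sketched; that evaluation is the substance of Evans--Greene's Theorem 1.5, which the paper simply imports. (Your vanishing argument for \eqref{eq:not-square} has the right mechanism, though the relevant sign is $\eta K(-1)$, i.e.\ $\eta K$ is a square in $\widehat{\F_q^\times}$ if and only if $\eta K(-1)=1$; ``$\phi(\eta K)$'' is not meaningful as written.)
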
  The formulation given in \cite{Evans-Greene} is different from the above self-dual form. We will explain in the proof below how to bridge both versions. 
	\begin{proof}
		When $t\neq 0$,
		\begin{multline*}
		\pPPq32{\phi&\eta&\ol \eta}{&K&\ol K}{t}= \frac{-1}{q-1}\sum_{\chi}\CC{\phi \chi}{\chi}\CC{\eta\chi}{K\chi}\CC{\ol\eta\chi}{\ol K\chi}\chi(t)\\
		\overset{\chi\mapsto K\chi}{=} K(t)\frac{-1}{q-1}\sum_{\chi}\CC{\phi K \chi}{K\chi}\CC{\eta K\chi}{K^2\chi}\CC{\ol\eta K\chi}{\chi}\chi(t)
		= \phi(-1)K(t) \pPPq32{\ol\eta K&\eta K&\phi K}{&K^2&K}t.
		\end{multline*}
		Apply the  Clausen's formula over $\F_q$ in \cite[Theorem 1.5]{Evans-Greene} with $C=K$ and $S^2=\eta K$, when $t\neq 1$, one has 
		\begin{multline*}
		\pPPq32{\ol\eta K&\eta K&\phi K}{&K^2&K}t\\=   \frac{J(\phi S, \phi K \ol S)}{J(S, K\ol S)} \left( \pPPq21{\phi K \ol S& S}{&K}t^2 -\phi(1-t)\ol K(t) \cdot \phi(-1)q\cdot \frac{J(S, K\ol S)}{J(\phi S, \phi K \ol S)}\right).
		\end{multline*}
		On the other hand, the claim \eqref{eq:Grossen-eta} follows from \cite[Proposition 8.7 (2)]{Win3X}.
		Therefore, when $t\neq 0$, $1$,
		\begin{align*}
		\pPPq32{\phi&\eta&\ol \eta}{&K&\ol K}{t}&= \phi(-1)K(t) \frac{J(\phi S, \phi K \ol S)}{J(S, K\ol S)}  \pPPq21{\phi K \ol S& S}{&K}t^2 -\phi(1-t) \cdot q \\
		&=  \phi(1-t)  \pPPq21{\phi K \ol S& S}{&K}t  \pPPq21{\phi \ol K S & \ol S}{& \ol K}t -\phi(1-t) \cdot q
		\end{align*}
		
		When $t=1$, according to \cite[Theorem 4.38]{Greene} (or \cite[Equation (7.2)]{Win3X}),  the evaluation is

		\begin{align*}
		\pPPq32{\phi&\eta&\ol \eta}{&K&\ol K}{1}&=  \phi(-1)\pPPq32{\ol\eta K&\eta K&\phi K}{&K^2&K}1\\
		& =\phi(-1)J(\eta K,\ol \eta K)J(\phi,\phi K)\left(\sum_{D=S, \phi S}\frac{J(D,K\ol S^2)J(\phi \ol D,\ol K S^2)}{J(\phi S, \ol K)J(S, \ol K)}\right)
		\end{align*}
		which can be converted to the right hand side of \eqref{eq:3P2(1)} using properties of Jacobi sums. \bk
		The identity (\ref{eq:Clausen-K=1}) can be verified directly, and (\ref{eq:not-square}) follows from Theorem 4.38(ii) of \cite{Greene} by Greene.
	\end{proof}
	
	\begin{lemma}\label{lem:det=qq}For $q$ odd, the product of two summands of \eqref{eq:3P2(1)} equals $q^2$, namely

		\begin{multline*}
		\(\phi\eta(-1)q\frac{J(\eta K,\ol \eta K)J(\phi,\phi K)}{J(\phi S, \ol K)J(S, \ol K)}\right)^2\frac{J(S,\phi \ol S)}{J(S\ol\eta, \phi\ol S\eta)}\frac{J(\phi S,\ol S)}{J(\eta\ol S,\phi S\ol\eta)}=q^2.\\
		\end{multline*}
	\end{lemma}
	\begin{proof}
		It is a straightforward verification. Using the double angle formula 
		$$
		\fg(A)\fg(\phi A)=\fg(A^2)\fg(\phi)\ol A(4),
		$$
		and the reflection formula 
		$$
		\fg(A)\fg(\ol A)=A(-1)q,\quad A\neq \eps
		$$
		on Gauss sums $\fg(A)$,
		we have 
		\begin{multline*}
		\left( \frac{J(\eta K,\ol \eta K)J(\phi,\phi K)}{J(\phi S, \ol K)J(S, \ol K)}\right )^2=\left ( \frac{\fg(\eta K)\fg(\ol \eta K)\fg(\phi S \ol K)\fg(\phi)\fg( \phi K)\fg(S\ol K)}{ \fg(K^2)\fg(\phi S)\fg( \ol K)\fg( K)\fg( S)\fg( \ol K)}\right )^2\\
		=\left ( \frac{\fg(\eta K)\fg(\ol \eta K)\fg(\phi)^2\fg( \phi K)\fg(S^2\ol K^2)}{ \fg(\phi K)\fg( \ol K)^2\fg( K)^2\fg( S^2)}\right )^2
		=\left ( \frac{\fg(\eta K)\fg(\ol \eta K)\fg(\phi)^2\fg( \phi K)\fg(\eta\ol K)}{ \fg(\phi K)\fg( \ol K)^2\fg( K)^2\fg( \eta K)}\right )^2
		=1
		\end{multline*}and
		$$
		\frac{J(S,\phi \ol S)}{J(S\ol\eta, \phi\ol S\eta)}\frac{J(\phi S,\ol S)}{J(\eta\ol S,\phi S\ol\eta)}= \frac{q^2}{ q^2}=1.$$ The claim hence follows.
	\end{proof}
	
	%%%%%%%%%%%%%% 6P5(1)
	\begin{lemma}\label{lem:6P5(1)}
		For a finite field $\F_q$ of odd characteristic  and  $A$, $B$, $C$, $D$, $E\in \widehat{\F_q^\times}$, we have
		\begin{multline*}
		\pPPq65{A&B &C&A& D& E}{&A\ol D&A \ol E&\eps&A\ol B&A \ol C}{1} =BCDE(-1) \sum_{t\in \F_q} A(t) \pPPq32{A& B&C}{&A\ol D&A \ol E}{t} ^2\\
		= BCDE(-1)\sum_{t \in\F_q} A(t) \pPPq32{A& D&E}{&A\ol B&A \ol C}{t} ^2.
		\end{multline*}
	\end{lemma}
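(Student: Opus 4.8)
The plan is to pass from the $\P$-functions to their binomial-coefficient form \eqref{eq:PP} and thereby reduce the identity to a comparison of multiplicative (Mellin) transforms. First I would expand the left-hand ${}_6\P_5(1)$. Since the argument is $1\neq 0$ the $\delta$-term in \eqref{eq:PP} vanishes, and with $B_1=\eps$ the value equals $\frac{1}{q-1}\big(\prod_{i=2}^{6}A_iB_i(-1)\big)\sum_{\chi}\binom{A\chi}{\chi}^2\binom{B\chi}{A\ol D\chi}\binom{C\chi}{A\ol E\chi}\binom{D\chi}{A\ol B\chi}\binom{E\chi}{A\ol C\chi}$. A direct count collapses the sign prefactor to $A(-1)$: each of $B,C,D,E$ meets its own inverse, leaving only $A^5(-1)=A(-1)$. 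So the left side becomes $\frac{A(-1)}{q-1}\sum_{\chi}\binom{A\chi}{\chi}^2\binom{B\chi}{A\ol D\chi}\binom{C\chi}{A\ol E\chi}\binom{D\chi}{A\ol B\chi}\binom{E\chi}{A\ol C\chi}$.

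Next I would expand the first ${}_3\P_2$ on the right, which I denote $G(t)$ (upper parameters $A,B,C$ and lower $A\ol D,A\ol E$). By \eqref{eq:PP}, for $t\neq 0$ one has $G(t)=\frac{-\,BCDE(-1)}{q-1}\sum_{\psi}c(\psi)\,\psi(t)$ with $c(\psi)=\binom{A\psi}{\psi}\binom{B\psi}{A\ol D\psi}\binom{C\psi}{A\ol E\psi}$; the leftover $\delta(t)$-term only affects $t=0$, where the factor $A(t)$ annihilates it since $A(0)=0$, so it never enters $\sum_t A(t)G(t)^2$. Squaring $G$ and invoking the orthogonality relation $\sum_{t\in\F_q}(A\chi)(t)=q-1$ if $\chi=\ol A$ and $0$ otherwise turns the twisted sum into a convolution: $\sum_{t}A(t)G(t)^2=\frac{1}{q-1}\sum_{\psi}c(\psi)\,c(\ol A\ol\psi)$.

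The crux is to match the two character sums termwise, i.e.\ to show $c(\psi)\,c(\ol A\ol\psi)=A(-1)BCDE(-1)$ times the $\chi=\psi$ summand of the left side. This rests on three reflection identities for the binomial symbol, each a consequence of $\binom{X}{Y}=-Y(-1)J(X,\ol Y)$, the symmetry $J(P,Q)=J(Q,P)$, and the fact that $X(-1)=\pm1$: concretely $\binom{\ol\psi}{\ol A\ol\psi}=A(-1)\binom{A\psi}{\psi}$, then $\binom{B\ol A\ol\psi}{\ol D\ol\psi}=ABD(-1)\binom{D\psi}{A\ol B\psi}$ (using $\ol A B\ol\psi=B\ol A\ol\psi$), and $\binom{C\ol A\ol\psi}{\ol E\ol\psi}=ACE(-1)\binom{E\psi}{A\ol C\psi}$. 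Their three sign factors multiply to $A^3BCDE(-1)=A(-1)BCDE(-1)$. Hence after multiplying $\sum_t A(t)G(t)^2$ by $BCDE(-1)$ and using $\big(BCDE(-1)\big)^2=1$, the convolution reproduces exactly $\frac{A(-1)}{q-1}\sum_\chi(\cdots)$, namely the left side. I expect this bookkeeping of the ubiquitous quadratic signs $X(-1)$ and the verification that the Jacobi-sum arguments genuinely coincide to be the only real obstacle; no deeper input is needed.

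Finally, the second equality follows by symmetry. The binomial-sum form of ${}_6\P_5(1)$ is invariant under permuting its six columns, and the simultaneous swap $B\leftrightarrow D,\ C\leftrightarrow E$ interchanges the columns $(B,A\ol D)\leftrightarrow(D,A\ol B)$ and $(C,A\ol E)\leftrightarrow(E,A\ol C)$ while fixing the two $(A,\eps)$ columns, so it leaves the value unchanged. Applying the first equality to this relabelled datum, and noting $DEBC(-1)=BCDE(-1)$, yields the same expression with $G(t)$ replaced by the ${}_3\P_2$ having upper parameters $A,D,E$ and lower $A\ol B,A\ol C$.
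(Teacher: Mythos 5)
Your proof is correct and takes essentially the same route as the paper: both arguments come down to the binomial-coefficient expansion \eqref{eq:PP}, orthogonality of characters to collapse the double sum into a single sum over $\chi$, and the Kummer-type inversion exchanging the parameter sets $(A,B,C;A\ol D,A\ol E)$ and $(A,D,E;A\ol B,A\ol C)$. The only difference is one of bookkeeping order: the paper invokes this inversion as Proposition \ref{prop: Kummer}, applied to one ${}_3\P_2$ factor at argument $1/t$ before convolving, whereas you re-derive exactly its coefficient-by-coefficient form via your three Jacobi-sum reflection identities (and you make explicit the column-swap symmetry giving the second equality, which the paper leaves implicit).
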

	
	\begin{proof}
		\begin{align*}&BCDE(-1)\sum_{t\in \F_q} A(t)\pPPq32{A&B&C}{&A\ol D&A\ol E}{t}^2\\
		&\overset{Prop. \ref{prop: Kummer}}=A(-1)\sum_{t\in \F_q^\times}\pPPq32{A&B&C}{&A\ol D&A\ol E}{t}\pPPq32{A&D&E}{&A\ol B&A\ol C}{1/t}\\
		&=A(-1)\sum_{t\in \F_q^\times} \frac{-1}{q-1}\sum_{\chi \in \widehat{\F_q^\times}}\binom{A\chi}{\chi}\binom{B\chi}{A\overline D\chi}\binom{C\chi}{A\overline E\chi} \chi(t)\frac{-1}{q-1}\sum_{\psi \in \widehat{\F_q^\times}} \binom{A\psi}{\psi}\binom{D\psi}{A\overline B\psi}\binom{E\psi}{A\overline{C}\psi}\psi(1/t) \\
		& =A(-1)\frac1{(q-1)^2}\sum_{\chi \in \widehat{\F_q^\times}}\binom{A\chi}{\chi}\binom{B\chi}{A\overline D\chi}\binom{C\chi}{A\overline E\chi} \sum_{\psi \in \widehat{\F_q^\times}} \binom{A\psi}{\psi}\binom{D\psi}{A\overline B\psi}\binom{E\psi}{A\overline{C}\psi}\sum_{t\in \F_q^\times}\chi\psi^{-1}(t).
		\end{align*}
		
		By the following orthogonality relation 
		$$
		\sum_{t\in \F_q^\times}\chi\psi^{-1}(t)=
		\begin{cases}
		&0,\quad \mbox{if } \chi\neq \psi,\\
		&q-1,\quad \mbox{if } \chi= \psi,
		\end{cases}
		$$
		we obtain that 
		
		\begin{align*}&BCDE(-1)\sum_{t\in \F_q} A(t)\pPPq32{A&B&C}{&A\ol D&A\ol E}{t}^2\\
		&=A(-1)\frac{1}{q-1}\sum_{\chi  \in \widehat{\F_q^\times}}\binom{A\chi}{\chi}\binom{B\chi}{A\overline D\chi}\binom{C\chi}{A\overline E\chi} \binom{A\chi}{\chi}\binom{D\chi}{A\overline B\chi}\binom{E\chi }{A\overline{C}\chi}\\
		&= \pPPq65{A&B &C&A& D& E}{&A\ol D&A \ol E&\eps&A\ol B&A \ol C}{1}.
		\end{align*}
	\end{proof}
	
	\subsection{Hypergeometric character sums and Galois representations}\label{ss:3.2}
	To a hypergeometric datum $HD=\{\alpha,\beta;\l\}$ consisting of a primitive pair of multi-sets $\alpha=\{a_1,..., a_n\}, \beta=\{1, b_2,..., b_n\}$ with $M = lcd(\alpha \cup \beta)$ and $\l \in \Z[\zeta_M,1/M]\setminus \{0\}$, and a prime $\ell$, we associate the $\ell$-adic representation $\rho_{HD,\ell}$ of $G(M) = G_{\Q(\zeta_M)}$ as in Theorem \ref{thm:Katz}. We are interested in when it can be extended to a representation of $G_K$ for a subfield $K$ of $\Q(\zeta_M)$, in particular $K=\Q$. This happens if and only if $\rho_{HD,\ell}$ is isomorphic to its conjugation by each element in Gal$(\Q(\zeta_M)/K)$. This is a standard result by the Clifford theory \cite{Clifford} and Frobenius reciprocity. For related discussions, see \cite[\S2.6]{LLL} by Li, Liu and Long.  If this happens we say  $\{\alpha,\beta;\l\}$ can be \emph{extended} to $K$.
	
	We now explain the Galois action. An element $\tau \in$ Gal$(\Q(\zeta_M)/\Q)$ sends $\zeta_M$ to $\zeta_M^c$ for some $c = c(\tau) \in (\Z/M\Z)^\times$. The  conjugation of $\rho_{HD,\ell}$ by $\tau$, denoted by  $\rho_{HD,\ell}^\tau$, satisfies
	$$ \Tr \rho_{HD,\ell}^{\tau}(\Frob_\wp) = \Tr \rho_{HD, \ell}(\Frob_{\tau(\wp)}) = \Tr \rho_{c\cdot HD, \ell}(\Frob_\wp)$$
	for almost all prime ideals $\wp$ of $\Z[\zeta_M]$. Regard the pair $\alpha, \beta$ in HD as a multi-set of column vectors $\left\{\begin{pmatrix}a_1\\1\end{pmatrix},\cdots, \begin{pmatrix}a_n\\b_n\end{pmatrix} \right\}$. Here $c\cdot HD$ denotes the datum 
	$$\{c\cdot \alpha, c\cdot \beta; \l\}= \left\{\begin{pmatrix}ca_1\\c \end{pmatrix},..., \begin{pmatrix}ca_n\\cb_n \end{pmatrix}; \l\right\}.$$ 
	This motivates the following definitions.
	
	\begin{definition}\label{def:1}
		For a primitive pair $\alpha, \beta$ with $M=lcd(\alpha \cup \beta)$ as above and $c \in (\Z/M\Z)^\times$, the pairs $\alpha, \beta$ and $c\cdot \alpha, c\cdot \beta$ are \emph{congruent mod $\Z$} if 
		the two multi-sets 
		$\left\{\begin{pmatrix}ca_1\\c \end{pmatrix},\begin{pmatrix}ca_2\\cb_2\end{pmatrix},...,   \begin{pmatrix}ca_n\\cb_n \end{pmatrix}\right\}$ and $\left\{\begin{pmatrix}a_1\\1 \end{pmatrix}, \begin{pmatrix}a_2\\b_2 \end{pmatrix},...,   \begin{pmatrix}a_n\\b_n \end{pmatrix}\right\}$ are congruent mod $\Z$. The pair $\alpha, \beta$ is said to be \emph{self-dual} if it is congruent to the pair $-\alpha, -\beta$ mod $\Z$, and it is \emph{defined over a subfield $K$ of $\Q(\zeta_M)$} if it is congruent to $c\cdot \alpha, c\cdot \beta$ mod $\Z$ for all $c=c(\tau)$, where $\tau \in$ Gal$(\Q(\zeta_M)/K)$. A hypergeometric datum $HD=\{\alpha, \beta; \l\}$ is said to be \emph{self-dual} if the pair $\alpha, \beta$ is self-dual and $\l$ is totally real; it is \emph{defined over $\Q$} if the pair $\alpha, \beta$ is defined over $\Q$ and $\l \in \Q^\times$.
	\end{definition}

	Clearly, if the pair $\alpha, \beta$ in $HD$ above is defined over $K$  and $\l\in K^\times$,  then the representation $\rho_{HD,\ell}$ can be extended to $G_K$.

	\begin{remark}\label{rk:density} Let $K$ be a number field  with the ring of integers $\Z_K$. Since the prime ideals of $\Z_K$ of degree  1  have density one  among all prime ideals of $\Z_K$, by the Chebotarev density theorem, if two  $\ell$-adic representations of $G_K$ ramified only at finitely many prime ideals have the same traces at $\Frob_\wp$ for almost all prime ideals  $\wp$ of $\Z_K$ of degree 1, then they are isomorphic as $G_K$ representations provided that they are semi-simple. In particular, if $K = \Q(\zeta_M)$ is a cyclotomic field, the semi-simplification of an $\ell$-adic representation of $G_K$ ramified only at finitely many prime ideals, such as those in Theorems \ref{thm:Katz} and \ref{thm:KBCM}, is determined by its  traces at $\Frob_\wp$ for almost all prime ideals $\wp$ of $\Z_K$ above primes $p \equiv 1 \mod M$.
	\end{remark}

	\medskip

	\subsection{BCM representations}\label{sec:BCM} 
	When $HD=\{\alpha, \beta; \l\}$ with $M=lcd(\alpha \cup \beta)$ is defined over $\Q$, 
	the representation  $\rho_{HD,\ell}$ of $G(M)$ can be extended to $G_\Q$. The extensions differ by  twists by characters of $G_\Q$ trivial on $G(M)$. The next result provides such an extension expressed in terms of the hypergeometric sums  $H_q(HD)$  defined in \cite[Definition 1.1]{BCM} by Beukers, Cohen and Mellit using Gauss sums  (see \eqref{eq:hq}  for the case when $HD$ is defined over $\Q$).  They depend only on the elements in $\alpha$ and $\beta$ mod $\Z$ and are independent of their order. For $\alpha =\{a_1,..., a_n\}$, $\beta = \{1, b_2,..., b_n\}$ forming a primitive pair  and $\l$ a nonzero element in a finite field $\F_q$ containing $M$th roots of $1$, $H_q$ and $\P$ are related by  
	\begin{equation}\label{eq:P-H}
	\P(\alpha,\beta;\l;\F_q;\omega) =H_q(\alpha,\beta;\l;\omega) \cdot \prod_{i=2}^n\omega^{(q-1)a_i}(-1)J(\omega^{(q-1)a_i},\omega^{(q-1)(-b_i)}),
	\end{equation}
	where
	$J(A,B)=\sum_{x\in \F_q}A(x)B(1-x)$ 
	is the Jacobi sum of characters $A$ and $B$.
	
	Given a hypergeometric datum $HD'=\{\alpha', \beta'; 1\}$, where $\alpha'=\{c, a\}, \beta'=\{1, b\}$ is a primitive pair with $lcd(\alpha'\cup \beta') = M'$, for a finite field $\F_q$ containing $M'$th roots of $1$,  we have 
	\begin{equation}\label{eq:Gauss}\P(HD'; \F_q; \omega) = J(\omega^{(q-1)a},\omega^{(q-1)(b-c-a)})
	\end{equation}(from either the definition of the $\P$-function or (6.11) of \cite{Win3X}).  By Theorem \ref{thm:Katz}, the representation $\rho_{HD', \ell}$ is a character of $G(M')$. 
	The relation (\ref{eq:P-H}) shows that when the elements in the multi-sets $\alpha$ and $\beta$ are permuted, the associated representations in Theorem \ref{thm:Katz} differ by a character of $G(M)$. On the other hand, the representation remains the same if the elements in $\alpha$ and $\beta$ are shuffled by the same permutation on $\{2,..., n\}$. For this reason, we shall regard the pair $\alpha, \beta$ in HD as a multi-set of column vectors $\left\{\begin{pmatrix}a_1\\1\end{pmatrix},\cdots, \begin{pmatrix}a_n\\b_n\end{pmatrix} \right\}$ from now on.
	
	Since the Jacobi sum $J(A, B)$ for any $A, B \in \widehat{\F_q^\times}$ has absolute value $q^{1/2}$ as long as $A$, $B$ and $AB$ are nontrivial, and is equal to $-1$ if one of $A, B$ is trivial, for the primitive pair $\alpha, \beta$ above, we have
	\begin{equation}\label{eq:J}
	\mathcal{J}(\alpha, \beta;\F_q; \omega) := \prod_{i=2}^n {\omega^{(q-1)a_i}(-1)}J(\omega^{(q-1)a_i},\omega^{(q-1)(-b_i)})= (-1)^{m-1}\chi(\alpha, \beta; \F_q) q^{(n-m)/2},
	\end{equation}
	where $m$ is the number of elements in $\beta$ which are in $\Z$ and $\chi(\alpha, \beta; \F_q)$ is a root of unity. If, moreover, the pair $\alpha, \beta$ is self-dual (see \S\ref{ss:3.2} for definition), then {$\chi(\alpha, \beta; \F_q)=\prod_{i=2}^n\omega^{(q-1)a_i}(-1) = \pm 1$ and $n-m$ is even.}
	\medskip

	Moreover, if $HD = \{\alpha, \beta; \l\}$ is defined over $\Q$, writing $\displaystyle \prod_{j=1}^n \frac{X-e^{2\pi i a_j}}{X-e^{2\pi i b_j}}=\frac{\prod_{j=1}^r(X^{p_j}-1)}{\prod_{k=1}^s(X^{q_k}-1)}$ where $p_j,q_k\in \Z_{>0}$ and $p_j \ne q_k$ for all $j, k$,  by Theorem 1.3 of \cite{BCM}, the character sum $H_q(\alpha,\beta;\l)$ 
	can be defined in the following way for any finite field $\F_q$ as long as $q$ is coprime to $lcd(\{1/M,\l\})$ 
	\begin{equation}\label{eq:hq}
	H_q(\alpha,\beta;\l):=\frac{(-1)^{r+s}}{1-q}\sum_{m=0}^{q-2}q^{-s(0)+s(m)}\prod_{j=1}^r \mathfrak{g}(\omega^{mp_j}) \prod_{k=1}^s \mathfrak{g}(\omega^{-mq_k})\omega(N^{-1} \l),
	\end{equation}where $\mathfrak{g}(\chi)$ denotes the Gauss sum of the character $\chi$, $N=(-1)^{ q_1+\cdots +q_s} \frac{p_1^{p_1}\cdots p_r^{p_r}}{q_1^{q_1}\cdots q_s^{q_s}}$ and $s(m)$ is the multiplicity of $X-e^{2\pi i m/(q-1)}$ in the factorization of $\displaystyle gcd(\prod_{j=1}^r(X^{p_j}-1),\prod_{k=1}^s(X^{q_k}-1)).$ 
	Further, since $H_q$ is $\Q$-valued, it is independent of the choice of $\omega$.
	\medskip

	\begin{theorem}[Katz, Beukers-Cohen-Mellit]\label{thm:KBCM} Let $\alpha=\{a_1,\cdots, a_n\},\beta=\{1,b_2,\cdots, b_n\}$ be a primitive pair with $M=lcd(\alpha\cup \beta)$  and let $ \l\in \Z[1/M]\smallsetminus \{0\}$. Suppose the hypergeometric datum $HD =\{\alpha, \beta; \l\}$ is defined over $\Q$  (see Definition \ref{def:1}).  Assume that exactly $m$ elements in $\beta$ are in $\Z$. 
		Then, for each prime $\ell$, there exists an $\ell$-adic  representation $\rho_{HD,\ell}^{BCM}$ of $G_\Q$ with the following properties:
		\begin{itemize}
			\item[i).] $\rho_{HD,\ell}^{BCM}|_{G(M)}\cong \rho_{HD,\ell}$.
			\item[ii).] For any prime  $p\nmid \ell \cdot M$ such that  $\ord_p \l = 0$, 
			\begin{equation}
			\Tr\, \rho_{HD,\ell}^{BCM}(\Frob_p)= \phi(M,a_1)(\Frob_p) {\chi(\alpha, \beta; \F_p)}  
			H_p(\alpha,\beta;1/\l)\cdot p^{(n-m)/2}\in \Z.
			\end{equation} 
			\item[iii).]  When $\l=1$,   $\rho_{HD,\ell}^{BCM}$ is $(n-1)$-dimensional and it has a subrepresentation, denoted by $\rho_{HD,\ell}^{BCM,prim}$, of dimension $2\lfloor \frac {n-1}2 \rfloor$ whose representation space admits a symmetric (resp. alternating) bilinear pairing if $n$ is odd (resp. even).  All roots of the characteristic polynomial of $\rho_{HD,\ell}^{BCM,prim}(\Frob_p)$ have absolute value $p^{(n-1)/2}$. 
		\end{itemize}
	\end{theorem}
	
	Note that $n-m$ is even, resulting from the primitive and self-dual assumptions on the pair $\alpha, \beta$. 
	
	The character $\phi(M,a_1)$ in Remark \ref{phi(M,a_1)}  
	extends to a character of $G_\Q$. The character $\phi(M,a_1)$ in Theorem \ref{thm:KBCM} refers to the extension with minimal conductor. When it is nontrivial, that is, when $\ord_2~M = -\ord_2 ~a_1 = r\ge 1$, it has conductor $2^{r+1}$. 
	In particular, when $\ord_2~M = -\ord_2~a_1 = 1$, $\phi(M,a_1)(\Frob_p)=\(\frac{-1}{p}\)$ is given by the Legendre symbol at odd primes $p$.

	For the pairs $(c,f)$ in (\ref{eq:7pair}), the characters $\chi$ and $\phi$ in Theorem \ref{thm:KBCM} at unramified odd primes $p$ are given by $\phi(M(c,f), a_1)(\Frob_p) = \(\frac{-1}{p}\) = \chi(\alpha_6(c,f), \beta_6(c,f); \F_p) = \chi(\alpha_4(f), \beta_4(c); \F_p)$ so that 
	\begin{eqnarray}\label{eq:BCMtrace}
	\Tr\, \rho_{HD_i(c,f),\ell}^{BCM}(\Frob_p)= 
	H_p(HD_i(c,f))\cdot p^{(n-m)/2} \quad {\rm for}~~i=1,2.
	\end{eqnarray}

	\medskip
	\subsection{Modularity}
	To obtain the modularity result for degree-2 Galois representations of $G_\Q$, we shall use the following version of the modularity theorem; see Theorem 2.1.4 of \cite{ALLL}.
	
	\begin{theorem}\label{thm:modulairty}
		Given a prime $\ell$ and a 2-dimensional absolutely irreducible representation $\rho$ of $G_\Q$ over $\overline \Q_\ell$ that is odd, unramified at almost all primes, and its restriction to a decomposition subgroup $D_\ell$ at $\ell$ is crystalline with Hodge-Tate weight $\{0,r\}$ where $1\le r\le \ell-2$ and $\ell+1\nmid 2r$, then $\rho$ is modular and corresponds to a weight $r+1$ holomorphic Hecke eigenform. \end{theorem}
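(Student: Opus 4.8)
The plan is to deduce this statement from the standard package of residual modularity together with a modularity lifting theorem, exactly as it is assembled in the proof of Theorem 2.1.4 of \cite{ALLL}. First I would fix a $G_\Q$-stable $\overline{\Z}_\ell$-lattice in the space of $\rho$ and reduce modulo the maximal ideal to obtain the semisimplified residual representation $\bar\rho\colon G_\Q \to GL_2(\overline{\F}_\ell)$. Oddness of $\rho$ passes to $\bar\rho$; and because $\rho$ is crystalline at $\ell$ with Hodge--Tate weights $\{0,r\}$ in the Fontaine--Laffaille range $1\le r\le \ell-2$, the restriction $\bar\rho|_{D_\ell}$ is governed by fundamental characters (of niveau $1$ or $2$) with exponents read off from $r$, and its predicted Serre weight is $r+1$.

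Next I would establish residual modularity. The hypothesis $\ell+1\nmid 2r$, combined with $1\le r\le\ell-2$, forces the two fundamental characters occurring in $\bar\rho|_{I_\ell}$ to be distinct and keeps $\bar\rho|_{G_{\Q(\zeta_\ell)}}$ absolutely irreducible; in particular $\bar\rho$ itself is absolutely irreducible, so the residually reducible case never arises. By Serre's modularity conjecture, now a theorem of Khare--Wintenberger, every odd, absolutely irreducible two-dimensional mod $\ell$ representation of $G_\Q$ is modular of its Serre weight and minimal level, so $\bar\rho$ arises from a mod $\ell$ eigenform of weight $r+1$. This same absolute irreducibility of $\bar\rho|_{G_{\Q(\zeta_\ell)}}$ is the \emph{big image} condition required downstream for the Taylor--Wiles patching.

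Then I would invoke a modularity lifting theorem of Taylor--Wiles--Kisin type in the crystalline (Fontaine--Laffaille) setting. Given that $\bar\rho$ is modular, that $\rho$ is absolutely irreducible with $\bar\rho|_{G_{\Q(\zeta_\ell)}}$ absolutely irreducible, and that $\rho$ is crystalline at $\ell$ of the prescribed Hodge--Tate type, the associated crystalline universal deformation ring is identified with the corresponding Hecke algebra ($R=T$), so every such deformation is modular; in particular $\rho$ arises from a classical Hecke eigenform. Matching the Hodge--Tate weights $\{0,r\}$ against the weight of that eigenform then pins the weight down to $r+1$, as asserted.

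The hard part is, of course, the two deep inputs themselves --- Serre's conjecture for the residual step and the $R=T$ lifting theorem for the characteristic-zero step --- together with the verification that the Taylor--Wiles hypotheses genuinely hold. The crux of the latter is precisely the condition $\ell+1\nmid 2r$: it is the arithmetic constraint that simultaneously keeps $\bar\rho|_{G_{\Q(\zeta_\ell)}}$ absolutely irreducible and the local crystalline deformation problem at $\ell$ in the smooth Fontaine--Laffaille regime, so that the lifting theorem applies without modification. Since the statement is quoted verbatim from \cite{ALLL}, in practice I would simply cite Theorem 2.1.4 there, the discussion above being the outline of why it holds.
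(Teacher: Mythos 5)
Your fallback---simply citing Theorem 2.1.4 of \cite{ALLL}---is in fact exactly what the paper does: the theorem is presented as an imported result (``see Theorem 2.1.4 of \cite{ALLL}'') and no proof is given in the paper itself. So as a matter of practice your proposal and the paper coincide.

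However, the justification you sketch contains a genuine error, and it concerns precisely the delicate point of the statement. You claim that the hypothesis $\ell+1\nmid 2r$ ``keeps $\bar\rho|_{G_{\Q(\zeta_\ell)}}$ absolutely irreducible; in particular $\bar\rho$ itself is absolutely irreducible, so the residually reducible case never arises.'' This is false: $\ell+1\nmid 2r$ is a purely local constraint at $\ell$ and cannot control global irreducibility of $\bar\rho$. Concretely, take $\rho$ to be the $691$-adic representation attached to the Ramanujan form $\Delta$, so $r=11$ and $\ell=691$: every hypothesis of the theorem holds (in particular $692\nmid 22$), yet by the Ramanujan congruence the semisimplified reduction is $1\oplus\chi^{11}$, which is reducible. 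What the condition actually buys is the following local fact: in the \emph{non-ordinary} case, Fontaine--Laffaille theory gives $\bar\rho|_{I_\ell}\cong\psi_2^{r}\oplus\psi_2^{r\ell}$ with $\psi_2$ the niveau-$2$ fundamental character, and since $\ell-1\equiv -2\pmod{\ell+1}$, these two characters remain distinct after restriction to the inertia subgroup of $G_{\Q(\zeta_\ell)}$ exactly when $\ell+1\nmid 2r$; only in that case does one obtain the Taylor--Wiles condition that $\bar\rho|_{G_{\Q(\zeta_\ell)}}$ is absolutely irreducible. In the ordinary case nothing of the sort holds, and a complete proof must treat the residually reducible (and residually dihedral) possibilities separately---for instance by noting that in the Fontaine--Laffaille range reducibility of $\bar\rho|_{D_\ell}$ forces $\rho$ to be ordinary at $\ell$, and then invoking modularity theorems of Skinner--Wiles type, which do not require the Taylor--Wiles hypothesis, in place of the route through Serre's conjecture plus a Taylor--Wiles--Kisin lifting theorem. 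As written, your argument simply fails for representations such as $\rho_{\Delta,691}$, for which the theorem is nevertheless true.
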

	
	The actual identification of the target modular form is carried out by computing the trace of the representation at various Frobenius conjugacy classes $\Frob_p$. In the next subsection we discuss  key issues involved in computation. Since representations from Theorem \ref{thm:KBCM} have integral traces, we use  
	the following theorem of Serre (cf. \cite[\S4.8]{Serre87} by Serre or \cite[Theorem 2.2]{Dieulefait04} by  Dieulefait) to narrow the search for the corresponding modular forms.
	\begin{theorem}[Serre]\label{thm:Serre} Suppose the trace of the representation $\rho$ of $G_\Q$ in Theorem \ref{thm:modulairty} is $\Z$-valued.
		Then  the $p$-exponents of the  conductor  of $\rho$
		are bounded by $8$ for $p=2$, by $5$ for $p=3$, and by $2$ for all other bad primes.
	\end{theorem}
	
	\subsection{Some computational issues}\label{ss:3.3}
	Given $\alpha =\{a_1,..., a_n\}$ and $\beta = \{b_1,..., b_n\}$ with $a_i, b_j \in \Q\cap [0, 1)$ and $M = lcd(\alpha \cup \beta)$, the following step function on the interval $[0, 1)$ is introduced in \cite{Long18} by the second author:  
	$$e_{\alpha,\beta}(x):=\sum_{i=1}^n -\left \lfloor a_i -x\right \rfloor-\left \lfloor x+ b_i  \right \rfloor.
	$$
	The  value of $e_{\alpha,\beta}(x)$ jumps up (resp. down)  only  at  $a_i$ (resp. $1-b_j$).
	When $p$ is an odd prime not dividing $M$, $0 \le k <p-1$ is an integer and the pair $\alpha, \beta$ is defined over $\Q$, $e_{\alpha,\beta}(\frac{k}{p-1})$ gives the collective exponent of $p$ in the $k$th summand of $H_p(\alpha,\beta;\l)$ recalled in \eqref{eq:hq}. 
	Moreover when the pair $\alpha, \beta$ is defined over $\Q$,  the graph of  $e_{c\alpha,c\beta}(x)$ is independent of any integer $c$ prime to $M$.

	The weight $w(HD)$ of a datum $HD=\{\alpha, \beta;\l\}$ is defined as 
	\begin{equation}\label{eq:w}
	w(HD)=w(\alpha,\beta):=\max e_{\alpha,\beta}(x)-\min e_{\alpha,\beta}(x).\end{equation}  When $\rho_{HD,\ell}$ can be extended to $G_\Q$,  $w(c\alpha,c\beta)$ is independent of $c$, although the graph of  $e_{c\alpha,c\beta}(x)$ may {depend on $c$}.

	From the definition, one can implement $H_p(\alpha,\beta;\l)$ in any computational package of choice.  In some of our computation and posted data, we use \texttt{Sagemath}. 
	For $HD$ defined over $\Q$, there is an efficient \texttt{Magma} program called ``Hypergeometric Motives over $\Q$" implemented by Watkins which computes the characteristic polynomial of $\rho_{\{\alpha,\beta;\l\},\ell}^{BCM}[t]$ (resp. $\rho_{\{\alpha,\beta;\l\},\ell}^{BCM,prim}[t]$) at $\text{Fr}_p$, the inverse of $\Frob_p$, for $p \nmid M\ell$ efficiently when $\l \neq 0,1$ (resp. $\l=1$), where 
	\begin{equation}\label{eq:t}
	t=-\min \{e_{\alpha,\beta}(x) \mid 0\le x< 1\}- \frac{n-m}2,
	\end{equation} which is an integer. We let
	$\rho[t]$ denote the weight-$t$ Tate twist of a representation $\rho$ of $G_\Q$.
	\medskip
	
	\begin{example}

		For $\alpha=\{\frac12,\frac15,\frac25,\frac35,\frac45\}$, $\beta=\{1,\frac1{10},\frac3{10},\frac7{10},\frac9{10}\}$, the graph of the function is as follows.
		
		\begin{center}
			\includegraphics[scale=0.3,origin=c]{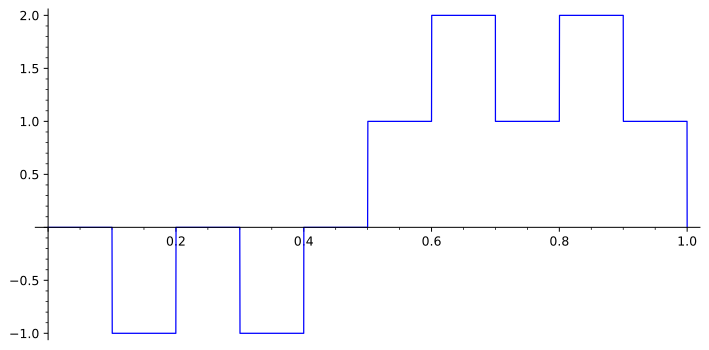}
			
			The graph of $e_{\alpha,\beta}(x)$ with $\alpha=\{\frac12,\frac15,\frac25,\frac35,\frac45\}$, $\beta=\{1,\frac1{10},\frac3{10},\frac7{10},\frac9{10}\}$ 
		\end{center} In this case, $\min e_{\alpha,\beta}=-1,$ $\max e_{\alpha,\beta}=2$, $w(HD)=2-(-1)=3$, $n=5,m=1$, so $t=-\min e_{\alpha,\beta}- \frac{n-m}2=1-\frac{5-1}2=-1$. When $\l=1$, $\dim \rho_{HD,\ell}^{BCM,prim}=2\lfloor \frac{5-1}2\rfloor=4$. 
		We exhibit below the commands used in the \texttt{Magma} program to compute the characteristic polynomial of  $\rho_{HD,\ell}^{BCM,prim}[-1]$ at $\text{Fr}_7$:
		\\
		
		\texttt{H:=HypergeometricData([1/2,1/5,2/5,3/5,4/5],[1,1/10,3/10,7/10,9/10]);}
		
		\texttt{Factorization(EulerFactor(H,1,7));}
		
		\medskip
		
		The output is
		
		\texttt{$<2401*\$.1^4 - 88*\$.1^2 + 1, 1>$}
		
		\medskip
		
		It confirms that that $\dim \rho_{HD,\ell}^{BCM,prim}=4$ and the roots of the characteristic polynomial of $\rho_{HD,\ell}^{BCM,prim}[-1]$ at $\text{Fr}_7$ all have absolute value $1/7$ under any embedding, which is $1/p^{(n-1)/2+t}$ at $p=7$. 
	\end{example}
	
	\begin{example}
		We now explain how to identify the weight-4 modular form corresponding to $HD_2(\f12,\f12).$
		Using \texttt{Magma}, we obtain the following output of the first few characteristic polynomials of $\text{Fr}_p$.
		
		$
		[ 3,\, 27*\$.1^2 + 4*\$.1 + 1],$
		
		$    [
		5,\,
		125*\$.1^2 + 2*\$.1 + 1
		],$
		
		$   [
		7,\,
		343*\$.1^2 - 24*\$.1 + 1
		],$
		
		$    [
		11,\,
		1331*\$.1^2 + 44*\$.1 + 1
		],$
		
		From the list we know the corresponding modular form has weight 4. As $lcd(HD_2(\f12,\f12))=2$, the level of the modular form is a power of 2 from which we identify the target modular form  $$f_{8.4.a.a}(\tau)=q-4q^3-2q^5+24q^7-11q^9-44q^{11}+\cdots.$$   
	\end{example}
	\begin{example}

		Here we consider another example  $HD=\{\{\frac13,\frac23\},\{1,\frac12\};-1\}$, in which each of $\{\frac13,\frac23\}$ and $\{1,\frac12\}$ is  defined over $\Q$ as a multi-set, but it is not so as a pair (see Definition \ref{def:1}).   
		
		\smallskip
		A partial  output is

		$    [        7,   \,     \$.1^2 + \$.1 + 1],$
		
		$    [        11,   \,     \$.1^2 + \$.1 + 1],$
		
		$    [        13,  \,      -\$.1^2 + 1],$
		
		$    [        17,  \,      -\$.1^2 + 1],$
		
		$    [        19,  \,      -\$.1^2 + 1],$
		
		$    [        23,   \,     -\$.1^2 + 1],$
		
		$    [        29,  \,      \$.1^2 - 2*\$.1 + 1]$
		
		From the above data we first read off the determinant being  $\chi_{-6}$. Looking more closely we see that it corresponds to the weight-1 Hecke eigenform $$f_{216.1.h.b}(\tau)= q + q^{2} + q^{4} - q^{5} - q^{7} + q^{8} - q^{10} - q^{11} - q^{14} + q^{16} - q^{20} - q^{22} - q^{28} + 2q^{29}+\cdots. $$ More explicitly, by Theorems 8.4 and 8.11 of \cite{Win3X} when $p\equiv 1\mod 3$ 
		\begin{align*}
		H_p\(\{\frac13,\frac23\},\{1,\frac12\};-1; \omega \)=& \ol\omega^{\frac{p-1}3}(2)H_p\(\{\frac13,\frac56\},\{1,\frac12\};1/2; \omega \)\\
		=&
		\begin{cases}
		0, &\, \mbox{if } p\equiv 13,19\mod 24,\\
		\ol\omega^{\frac{p-1}3}(2+a)+\ol\omega^{\frac{p-1}3}(2-a),& \, \mbox{if } p\equiv 1, 7 \mod 24,
		\end{cases}
		\end{align*}where $a$ is a fixed square root of $2$ in $\F_p$;
		and  from the conversion between $\P$ and $H_p$ we get: for $p\equiv 1\mod 3$ 
		$$
		\P\(\{\frac13,\frac23\},\{1,\frac12\};-1;p;  \omega \)=J(\ol\omega^{\frac{p-1}3},\omega^{\frac{p-1}2}) H_p\(\{\frac13,\frac23\},\{1,\frac12\};-1;\omega\).
		$$It indicates that $\rho_{HD,\ell}$, which is a representation of $G(3)$, can not be directly extended to $G_\Q$; however, after twisting by a suitable character of $G(3)$, an extension exists. \bk
	\end{example}

	\medskip
	
	\section{{Galois representations associated to hypergeometric data}}\label{ss:Galois}
	
	\subsection{The Whipple $_7F_6(1)$ formula}\label{sec:Whipple} We  explain how the special form of the Whipple formula in \S \ref{sec:purpose} is derived. The well-known 
	Whipple $_7F_6(1)$ formula 
	asserts that 
	{\tiny\begin{multline}\label{eq:Whipple}
		\pFq{7}{6}{a&1+\frac a2&c&d&e&f&g}{&\frac a2& 1+a-c&1+a-d&1+a-e&1+a-f&1+a-g}{1}\\=
		\frac{\G(1+a-e)\G(1+a-f)\G(1+a-g)\G(1+a-e-f-g)}{\G(1+a)\G(1+a-f-g)\G(1+a-e-f)\G(1+a-e-g)} \cdot\pFq{4}{3}{a&e&f&g}{&e+f+g-a&1+a-c&1+a-d}{1},
		\end{multline}}
	
	\noindent when both sides  terminate;  see Theorem 3.4.4 of the book \cite{AAR} by Andrews, Askey and Roy for backgrounds and details.   The  $_7F_6(1)$ on the left hand side is  well-posed, meaning the upper and lower parameters sum to the same value (equal to $1+a$ in our case) in each column.  By Bailey \cite{Bailey}, the left hand side can be also computed by a contour integral of Barnes type, which is related to the irrationality of $\zeta(3)$, see \cite{Zudilin-ICCM} by Zudilin for the related discussions. To the corresponding hypergeometric systems, Katz \cite{Katz} attached  $\ell$-adic sheaves on the projective line with three points $0, 1, \infty$ removed. 
	For the hypergeometric sheaves arising from the data in the Whipple formula to be self-dual, we impose 
	additional assumptions: $$a=\frac12,\quad   c+d=1, \quad  f+g=1, \quad  \text{and } e=\frac{1-p}2$$ for an odd natural number $p$ so that both sides converge.\footnote{ Here $e$ is set to be $\frac{1-p}2$ instead of the target $\frac12$ so that the Gamma quotient on the right hand side of \eqref{eq:Whipple} is a finite value.}   Whipple's formula now becomes \eqref{eq:Whipple2} in the Introduction.

	Recall that the hypergeometric data $HD_1$ and $HD_2$ are given by 
	\begin{equation*}
	HD_1(c,f)=\left \{\alpha_6(c,f):=\left\{\f12,c,1-c,\f12,f,1-f\right \},\beta_6(c,f):=\left \{ 1,\f32-c,\f12+c,1,\f32-f,\f12+f\right \};1 \right\}
	\end{equation*}
	\begin{equation*}
	HD_2(c,f)=\left\{\alpha_4(f):=\lc\f12,\f12,f,1-f\rc,\beta_4(c):=\lc 1,1,\f32-c,\f12+c\rc;1 \right\}\end{equation*}
	with $c,f\in \Q\cap (0,1)$ as in \S1.2. They are both self-dual from the construction. Let $$M = M(c,f) = lcd(\alpha_6(c,f) \cup \beta_6(c,f)) = lcd(\alpha_4(c,f) \cup \beta_4(c,f)).$$ 
	Then both $\alpha_6(c,f)$
	and $\beta_6(c,f)=1+\f12 -\alpha_6(c,f)$
	in $HD_1$ are defined over $\Q(\zeta_M+\zeta_M^{-1})$. In particular, only 14  un-ordered couples $(c,f)$ give rise to $HD_1$ defined over $\Q$: $$\text{either } c,f\in \left \{\f12,\frac13,\frac14,\frac16\right \}, \quad \text{ or} \quad (c,f)=\left(\frac 15,\frac25\right),\left(\frac 18,\frac38\right), \left(\frac 1{10},\frac3{10}\right)$$ as enumerated in \cite{LTYZ}. Among them, only  seven pairs of $(c,f)$ give rise to primitive $HD_1$,  as listed in \eqref{eq:7pair}.

	\subsection{Proof of  Theorems \ref{thm:LLT2main} and \ref{thm:main}} \label{3.2}
	Let 
	\begin{equation}\label{eq:HD3}
	\alpha_3(f)= \left\{\f12,f,1-f\right\}, \quad \beta_3(c)=\left\{1,\frac32-c,\f12+c\right\}
	\end{equation} which form a primitive pair with $M(c,f)=lcd(\alpha_3(f) \cup \beta_3(c))$.
	Motivated by the finite field Clausen formula (Theorem \ref{thm:E-G}), we define the multi-sets 
	\begin{equation*}
	\alpha_2(c,f):=\left\{ \frac{1+2f-2c}{4},\frac{3-2f-2c}4,\right\}, \quad \beta_2(c):=\left\{1,\frac32-c\right\}
	\end{equation*} with $N(c,f)=lcd(\alpha_2(c,f)\cup \beta_2(c))$.  Then either $N(c,f)=2M(c,f)$ or $N(c,f)=M(c,f)$.

	Fix a prime $\ell$. 
	By Theorem \ref{thm:Katz}, for any nonzero $\l \in  \Z[\zeta_{M(c,f)},1/M(c,f)]$  there exists an $\ell$-adic representation
	\begin{equation}\label{eq:rho}
	\rho_{\{\alpha_3(f),\beta_3(c);\l\},\ell}: G(M(c,f))\rightarrow GL_{\overline {\Q}_\ell}(W_\l(c,f))
	\end{equation}such that at any prime ideal $\wp$  of $\Z[ \zeta_{M(c,f)},1/(N(c,f)\ell)]$ with $\ord_\wp \l = 0$ 
	\begin{equation}\label{eq:trace-cond}
	\text{Tr}  \rho_{\{\alpha_3(f),\beta_3(c),\l\};\ell}(\Frob_\wp) = \phi_{\wp}(-1) \ \P\left (\alpha_3(f),\beta_3(c);{1/\l};\kappa_\wp\right ).
	\end{equation}
	Moreover, $\dim W_1(c,f)=2$ and $\dim W_\l(c,f)=3$  when $\l\neq 0,1$.

	We first deal with the $\l=1$ case. {Let $HD_3(c,f)=\{\alpha_3(f),\beta_3(c);1\}$, and write 
		$\rho_{HD_3(c,f),\ell}$ for the $2$-dimensional representation  $\rho_{\{\alpha_3(f),\beta_3(c),1\};\ell}$.

		Let $\wp$ be a prime ideal  of $\Z[ \zeta_{N(c,f)},1/(N(c,f)\ell)]$. A similar formula for $\text{Tr}  \rho_{HD_3(c,f);\ell}(\Frob_\wp)$ holds. The character $\omega_{\wp}^{(N(\wp) -1)(f+\frac32-c)}$ is a square in $\widehat{\kappa_\wp^\times}$ by the definition of $N(c,f)$. Hence we can apply Theorem \ref{thm:E-G} to get more information on $\P\left (\alpha_3(f),\beta_3(c);1;\kappa_\wp\right )$.} 
	It follows from (\ref{eq:3P2(1)}) and (\ref{eq:Gauss}) that    $\rho_{HD_3(c,f),\ell}|_{G(N(c,f))}$ is a direct sum of two explicit characters $\xi_1$ and $\xi_2$ of $G(N(c,f))$ associated to Jacobi sums, see \cite{Weil52}. Further, Lemma \ref{lem:det=qq} implies $\det \rho_{HD_3(c,f),\ell}|_{G(N(c,f))} = \epsilon_\ell^2$. Since $G(N(c,f))$ has index at most $2$ in $G(M(c,f))$, there is a character $\psi_{c,f}$ of $G(M(c,f))$ of order dividing $2$ such that $\det \rho_{HD_3(c,f),\ell} = \psi_{c,f}\epsilon_\ell^2$. We record this in
	
	\begin{prop}\label{prop:detrho3} There is a character $\psi_{c,f}$ of $G(M(c,f))$ of order dividing $2$ such that $$\det \rho_{HD_3(c,f),\ell} = \psi_{c,f}\epsilon_\ell^2.$$ Moreover, $\det \rho_{HD_3(c,f),\ell}|_{G(N(c,f))} = \epsilon_\ell^2$. 
	\end{prop}
	
	For the seven pairs $(c,f)$ listed in \eqref{eq:7pair}, we have more detailed information about the representation $\rho_{HD_3(c,f),\ell}$, as shown below. Among them, $N(c,f)=M(c,f)$ only happens when $(c,f)=(\f12,\frac13).$

	\begin{prop}\label{cor:3P2lift}Let $\ell$ be a prime. For each pair $(c,f)$ in \eqref{eq:7pair}, 
		$\rho_{HD_3(c,f),\ell}$ is invariant under the twist by the quadratic character $\chi_{-1}|_{G(M(c,f))}$  when $(c,f) \ne (\frac12, \frac13)$. 
		Further, each $\rho_{HD_3(c,f),\ell}$ can be extended to a representation of $G_\Q$, which is modular and invariant under a quadratic twist.
		Each extension has determinant $\psi_{c,f}\epsilon_\ell^2$, where $\psi_{c,f}$ is a quadratic character of $G_\Q$ given in the table below, extending the character $\psi_{c,f}$ of $G(M(c,f))$ in Prop. \ref{prop:detrho3}. 
		Listed below are the modular forms corresponding to $\rho_{HD_3(c,f),\ell}^{BCM}$ when $HD_3(c,f)$ is defined over $\Q$ and one of the extensions for the last two cases. 
		$$
		\begin{array}{|c|c|c|c|c|}
		\hline
		(c,f) & w(HD_3)&\Tr \rho_{\{\alpha_3(f),\beta_3(c);1\},\ell}(\Frob_p)&  \det \rho_{\{\alpha_3(f),\beta_3(c);1\},\ell}=\psi_{c,f}\epsilon_\ell^2 & CM \\ \hline
		(\f12,\frac12) &3 &a_p(\eta(4\tau)^6)&\chi_{-1}\epsilon_\ell^2& \chi_{-1}\\ \hline
		\(\frac 12,\frac13\)&3&
		{ a_p(\eta(2\tau)^3\eta(6\tau)^3)}&\chi_{-3}\epsilon_\ell^2& \chi_{-3}\\
		\hline
		(\frac13,\frac13)&3 &a_p(f_{36.3.d.b})&\chi_{-1}\epsilon_\ell^2&\chi_{-1} \\ \hline
		(\f12,\frac16)&3&\( \frac{-3}p\) a_p(\eta(4\tau)^6)&\chi_{-1}\epsilon_\ell^2&\chi_{-1} \\
		\hline
		(\frac16,\frac16) &1& p\cdot a_p(\eta(12\tau)^2)&\chi_{-1}\epsilon_\ell^2&\chi_{-1},   \chi_{-3}\\
		\hline
		(\frac15,\frac25)    &3 & a_p(f_{20.3.d.a})&\chi_{-5}\epsilon_\ell^2&\chi_{-5}\\
		\hline
		(\frac1{10},\frac3{10})&1& p\cdot a_p(\eta(4\tau)\eta(20\tau))&\chi_{-5}\epsilon_\ell^2& \chi_{-1}, \chi_{-5}\\ \hline
		\end{array}$$
		
	\end{prop}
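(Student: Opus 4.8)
The plan is to compute the trace of $\rho_{HD_3(c,f),\ell}$ explicitly by the finite-field Clausen formula and to recognize the outcome as a Hecke Gr\"ossencharacter sum, which will simultaneously deliver the twist-invariance, the modularity, and the determinant. First I would match $HD_3(c,f)=\{\alpha_3(f),\beta_3(c);1\}$ to the datum evaluated in Theorem \ref{thm:E-G}: writing $\phi$ for the quadratic character, $\eta=\omega_\wp^{(N(\wp)-1)f}$ and $K=\omega_\wp^{(N(\wp)-1)(\f12+c)}$, the upper characters of $\P(\alpha_3(f),\beta_3(c);1;\kappa_\wp)$ are $\phi,\eta,\ol\eta$ and the lower ones are $\ol K,K$, so it is exactly the Clausen-type $_3\P_2(1)$ of Theorem \ref{thm:E-G}. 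By \eqref{eq:trace-cond}, $\Tr\rho_{HD_3(c,f),\ell}(\Frob_\wp)=\phi(-1)\,\P(\alpha_3(f),\beta_3(c);1;\kappa_\wp)$, and I can substitute \eqref{eq:3P2(1)} when $\eta K$ is a square and \eqref{eq:not-square} when it is not.

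Two consequences should then be read off. First, \eqref{eq:not-square} shows the trace vanishes whenever $\eta K$ is a nonsquare. Translating ``$\eta K$ is a square'' into a congruence on $N(\wp)$, one finds that for every pair except $(c,f)=(\f12,\f13)$, where $\eta K$ has odd order and is therefore always a square, the nonsquare locus is precisely the set of $\wp$ with $\chi_{-1}(\Frob_\wp)=-1$. Hence $\Tr\rho_{HD_3(c,f),\ell}$ and $\Tr(\rho_{HD_3(c,f),\ell}\otimes\chi_{-1})$ agree at almost all $\wp$, and by Remark \ref{rk:density} the representation is invariant under $\chi_{-1}|_{G(M(c,f))}$; the pair $(\f12,\f13)$ is the genuine exception, its relevant invariance being by $\chi_{-3}$ instead, which comes from the cube-root-of-unity structure of the Jacobi sums below. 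Second, when $\eta K=S^2$ the evaluation \eqref{eq:3P2(1)} (and its special case \eqref{eq:Clausen-K=1}, relevant because $\beta_3(\f12)=\{1,1,1\}$ forces $K=\eps$ for $(\f12,\f13)$) writes the trace as $J_1^2+J_2^2$ for a conjugate pair of Jacobi sums $J_1,J_2$. Since Jacobi sums are values of Hecke Gr\"ossencharacters, this exhibits $\Tr\rho_{HD_3(c,f),\ell}(\Frob_\wp)=\psi(\wp)+\ol\psi(\wp)$ for a Gr\"ossencharacter $\psi$ of an imaginary quadratic field $E$ read off from the cyclotomic field containing $J_1$; equivalently $\rho_{HD_3(c,f),\ell}$ is the restriction to $G(M(c,f))$ of an induced, hence CM, representation. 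Its determinant is computed from this structure as $\psi_{c,f}\epsilon_\ell^2$, where $\psi_{c,f}$ is the quadratic character cutting out $E$ and $\epsilon_\ell^2$ records the motivic weight $2$ guaranteed by Theorem \ref{thm:Katz}(iii).

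Next I would extend to $G_\Q$ and establish modularity. When $HD_3(c,f)$ is defined over $\Q$ (Definition \ref{def:1}), Theorem \ref{thm:KBCM} furnishes the canonical extension $\rho_{HD_3(c,f),\ell}^{BCM}$; for the last two pairs $HD_3(c,f)$ is only defined over a proper subfield of $\Q(\zeta_{M(c,f)})$, so I would instead invoke the self-conjugacy criterion (Clifford theory, as recalled before Definition \ref{def:1}) to produce an extension, which is then not unique --- this non-uniqueness is exactly why several CM characters are listed for those rows. Modularity is automatic from the CM structure, since an induced representation from a Hecke Gr\"ossencharacter is the Galois representation of the corresponding theta series; in the weight-$3$ rows one may alternatively apply Theorem \ref{thm:modulairty} directly, the representation being odd, absolutely irreducible, and crystalline with Hodge--Tate weights $\{0,2\}$.

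The last and most delicate step is to pin down the exact newforms in the table. Here I would bound the admissible levels via Theorem \ref{thm:Serre}, compute $\Tr\rho_{HD_3(c,f),\ell}(\Frob_p)$ at a handful of small split primes straight from the Jacobi-sum expression, and match these against the candidate CM eta-quotients and LMFDB newforms of the predicted weight, level, and nebentypus $\psi_{c,f}$, using Remark \ref{rk:density} to upgrade a finite trace match to an isomorphism. I expect the real obstacle to lie here: correctly normalizing the Gr\"ossencharacter --- in particular tracking the sign character $\phi(M,a_1)$ of Remark \ref{phi(M,a_1)} and the Tate twists responsible for the factors of $p$ in the $(\f16,\f16)$ and $(\f1{10},\f3{10})$ rows --- computing the precise conductor, and disentangling the several admissible extensions so that the listed CM fields and twist relations come out internally consistent.
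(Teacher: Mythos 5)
Your outline follows the same route as the paper's proof: evaluate $\Tr \rho_{HD_3,\ell}(\Frob_\wp)$ by the finite-field Clausen formula (Theorem \ref{thm:E-G}); use \eqref{eq:not-square} to see the trace vanishes exactly when $N(\wp)\equiv 3 \mod 4$ (for all pairs except $(\f12,\frac13)$), giving the $\chi_{-1}$-twist invariance; recognize the two conjugate Jacobi-sum summands in \eqref{eq:3P2(1)} as Weil Gr\"ossencharacters $\xi_1,\xi_2$, so that any extension to $G_\Q$ is induced from a character of an index-two subgroup and hence modular by Hecke; and pin down the forms by Serre's level bound (Theorem \ref{thm:Serre}) plus trace computations at small primes. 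All of this is what the paper does. The problems lie in the two steps you treat as routine.

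First, for $(c,f)=(\frac15,\frac25)$ and $(\frac1{10},\frac3{10})$ the datum is defined only over $\Q(\sqrt 5)$, and ``invoking the self-conjugacy criterion'' is precisely the point that must be proved: one needs $\rho_{HD_3,\ell}\cong\rho_{HD_3,\ell}^\tau$ for $\tau:\zeta_5\mapsto\zeta_5^3$, which is not supplied by Clifford theory but is an input to it. The paper proves it by applying the Kummer transformation (Proposition \ref{prop: Kummer}) to get $\rho_{HD_3,\ell}^\tau\otimes\chi_{-1}|_{G(M)}\cong\rho_{HD_3,\ell}$ and then quoting the already-established $\chi_{-1}$-invariance; nothing in your outline produces this identity. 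Second, irreducibility of the extensions---which the table's identification with \emph{cuspidal} eigenforms requires, and which Theorem \ref{thm:modulairty} also presupposes---is never addressed, and it is genuinely delicate exactly where your Gr\"ossencharacter picture degenerates: for $(\frac16,\frac16)$ and $(\frac1{10},\frac3{10})$ one has $\xi_1=\xi_2$, so there is no ``imaginary quadratic field $E$ read off from the cyclotomic field containing $J_1$,'' and cuspidality of a theta series is not automatic. The paper handles $(\frac16,\frac16)$ by a separate Gauss-sum computation showing $H_p(HD_3)=0$ for all $p\equiv 2\mod 3$ and then ruling out reducibility by a contradiction with the $\chi_{-1}$-invariance; for $(\frac1{10},\frac3{10})$ it shows $\xi_1=\xi_2=\(\frac{\sqrt 5}{\cdot}\)_2\epsilon_\ell|_{G(20)}$ extends to $G_{\Q(\sqrt 5)}$ but not to $G_\Q$. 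Note that the inducing field there is the \emph{real} field $\Q(\sqrt 5)$, so your claim that the representation is induced from an imaginary quadratic field---and your implicit conclusion that oddness is automatic---fails in that case; the paper needs a further argument using the signs of $\(\frac{\sqrt 5}{\cdot}\)_2$ at the two real places of $\Q(\sqrt 5)$ to conclude oddness. Until these points are filled in, the modularity and the ``one of the extensions'' assertions for the last two rows, and the cuspidal identification in the $(\frac16,\frac16)$ row, are not established.
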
 
	
	\begin{proof} Write $N$ for $N(c,f)$ and $M$ for $M(c,f)$ when there is no ambiguity. As noted above, 
		$\rho_{HD_3,\ell}|_{G(N)}$ is a direct sum of two  
		characters $\xi_1$ and $\xi_2$ of $G(N)$.  When $(c,f)\neq (\frac 16,\frac16)$ or $(\frac 1{10},\frac3{10})$, $\xi_1$ is not equal to $\xi_2$. \bk Furthermore, by \eqref{eq:not-square}, when $N = 2M$, that is, $(c,f) \ne (\f12,\frac13)$, $\P(HD_3,q)=0$ when $q\equiv 3 \mod 4$.  
		This implies that $\rho_{HD_3,\ell}$ is invariant under twist by $\chi_{-1}|_{G(M)}$.

		For the first five pairs, $HD_3$ is defined over $\Q$. The determinant of $\rho_{HD_3,\ell}^{BCM}$, by Theorem \ref{thm:KBCM}, can be determined explicitly as it is $\epsilon_\ell^2$ times a character of order at most 2 and conductor dividing $24$. They turn out to be either $\chi_{-1}\epsilon_\ell^2$ or $\chi_{-3}\epsilon_\ell^2$, as listed in the table.  For example, when $(c,f)=(\f12,\frac13)$, the determinant at $\Frob_p$ equals $-25,49,-121,169$ for $p=5,7,11,13$, respectively. One sees that the determinant is $\chi_{-3}\epsilon_\ell^2$. 
		This shows that  $\rho_{HD_3,\ell}^{BCM}$ is odd.  When $(c,f)\neq (\frac16,\frac16)$, from $\xi_1\neq \xi_2$ we know $\rho_{HD_3,\ell}^{BCM}$ is irreducible. 
		When $(c,f) = (\frac 16,\frac16)$,  by \eqref{eq:hq} and the  reflection formula for Gauss sum, we have for any prime $p$ not dividing $6$ 
		$$
		H_p(HD_3) =  \frac{1}{p-1}\sum_{\chi\in \widehat{\F_q^\times}}\frac{\mathfrak{g}(\chi^6)\mathfrak{g}(\ol{\chi}^3)}{\mathfrak{g}(\chi^3)}\chi(-2^{-6}).
		$$
		When $p\equiv 2 \mod 3$, the map $\chi \mapsto \chi^3$ gives an automorphism of $\widehat{\F_p^\times}$. It follows that 
		\begin{align*}
		H_p(HD_3)  =&\frac{1}{p-1}\sum_{\chi}\frac{\fg(\chi^2)\fg(\ol{\chi})}{\fg(\chi)}\chi(-2^{-2})
		= \frac{1}{p-1}\sum_{\chi}\frac{\fg(\chi)\fg(\phi_p\chi)\fg(\ol{\chi})}{\fg(\phi_p)\fg(\chi)}\chi(-1)\\
		=&\frac{1}{p-1}\sum_{\chi} J(\phi_p\chi,\ol{\chi})\chi(-1)=\frac{1}{p-1}\sum_x \phi_p(x)\sum_{\chi}\chi(-x)\ol{\chi}(1-x)=0.
		\end{align*} To obtain the second  equality above we use the duplication formula for Gauss sums. 
		If $\rho_{HD_3,\ell}^{BCM}$ is reducible, then it is isomorphic to $\xi\oplus \xi\chi_{-3}$ for some 1-dimensional  representation $\xi$ of $G_\Q$. Meanwhile, as  $\rho_{HD_3,\ell}$ is invariant when twisted by $\chi_{-1}$, we have 
		$(\xi\oplus \xi\chi_{-3})|_{G(6)}\cong (\xi\oplus \xi)|_{G(6)}\cong(\xi\chi_{-1}\oplus \xi\chi_{-1})|_{G(6)}$, which is impossible.

		For the remaining two pairs, we have  
		$\rho_{HD_3,\ell}\cong (\chi_{-1})|_{G(M)}\otimes \rho_{HD_3,\ell}$ as noted before. In these two cases  $M = 10$ and the field $\Q(\zeta_{M})$ has degree $4$ over $\Q$. We show that $\rho_{HD_3,\ell}$ can be extended to $G_\Q$ for the case $(c,f) = (\frac15, \frac25)$; the same conclusion holds for $(c,f)=(1/10, 3/10)$ by a similar argument. The group Gal$(\Q(\zeta_5)/\Q)$ is generated by the automorphism 
		$\tau:\zeta_5\mapsto \zeta_5^3$. It maps  $\alpha_3(\frac25)=\{\f12,\frac 25,\frac35\}$ to $\{\f12,\frac15,\frac45\}$ and $ \beta_3(\frac15)=\{1,\frac{13}{10},\frac7{10}\}$ to $\{1,\frac9{10},\frac{11}{10}\}$. By Proposition \ref{prop: Kummer}, for any prime power $q$ coprime to $M$,
		$$\P\left(\alpha_3({f}),\beta_3({c});1; \F_q\right)={\phi_q(-1)}\P\left({\lc \frac12,\frac15,\frac45\rc,\lc 1,\frac9{10},\frac1{10}\rc};1; \F_q\right).$$In terms of representations, this means that $\rho_{HD_3,\ell}\cong  \chi_{-1}|_{G(M)}\otimes \rho_{HD_3,\ell}^\tau$,  where $\rho_{HD_3,\ell}^\tau$ is the conjugate of $\rho_{HD_3,\ell}$ by $\tau$. As shown above, $\rho_{HD_3,\ell}\otimes \chi_{-1}|_{G(M)} \cong \rho_{HD_3,\ell}$, therefore 
		$\rho_{HD_3,\ell}\cong \rho_{HD_3,\ell}^\tau$. 
		This proves that $\rho_{HD_3,\ell}$  can be extended to $G_\Q$, denoted by $\rho_{HD_3,\ell}^{BCM}$ by abuse of notation.

		For $(\frac15,\frac25)$, $\xi_1 \neq \xi_2$, which implies that  $\rho_{HD_3,\ell}^{BCM}$ is irreducible.  By Clifford theory, $\xi_1$ extends to an index-2 subgroup $G_K$ of $G_\Q$ so that  $\rho_{HD_3,\ell}^{BCM} = \text{Ind}_{G_K}^{G_\Q} \xi_1$. Since the map $\iota: \zeta_4\mapsto -\zeta_4$ in Gal$(\Q(\zeta_{20})/\Q(\zeta_{10}))$ swaps $\xi_1$ and  $\xi_2$, $K$ is not contained in $\Q(\zeta_{10})$. On the other hand, $\Q(\zeta_{20})$ contains three quadratic subfields, so $K$ is one of the two imaginary extensions outside $\Q(\zeta_{10})$. This shows that $\rho_{HD_3,\ell}^{BCM}$ is odd.
		
		For $(\frac1{10},\frac3{10})$, by formula \eqref{eq:3P2(1)} and simplification using the reflection and multiplication formulas of Gauss sums, we get $\xi_1 = \xi_2 = \(\frac {\sqrt 5}{\cdot}\)_2 \epsilon_\ell|_{G(20)}$, where $\(\frac {\sqrt 5}{\cdot}\)_2$ is a Hilbert quadratic norm residue symbol on $\Q(\zeta_{20})$. Clearly $\xi_1$ extends to $G_{\Q(\sqrt 5)}$ and not $G_\Q$. Hence $\rho_{HD_3,\ell}^{BCM}$ is irreducible and it equals  $\text{Ind}_{G_{\Q(\sqrt 5)}}^{G_\Q} \xi_1$. Moreover, viewed as an idele class character of $\Q(\sqrt 5)$ by class field theory, $\(\frac {\sqrt 5}{\cdot}\)_2$ is trivial at one real place of $\Q(\sqrt 5)$, and is the sign character at the other real place, resulting from the two real embeddings of $\sqrt 5$. This shows that $\rho_{HD_3,\ell}^{BCM}$ is odd.
		
		In conclusion, for each of the seven cases, $\rho_{HD_3,\ell}^{BCM}$ is odd, irreducible and, by Clifford theory, induced from a character of an index-2 subgroup of $G_\Q$. Hence it is modular by the work of Hecke.
		When $HD_3$ is defined over $\Q$, we compute the characteristic polynomials of each representation at suitable Frobenius conjugacy classes and identify the corresponding modular forms using the method described in \S \ref{ss:3.3}. For the other two cases, from \eqref{eq:3P2(1)} we identify the corresponding modular forms.  For example, for primes $p <200$ and $p\equiv 1 \mod 10$, $\P(HD_3(\frac15,\frac25);1;\F_p)=a_p(f_{20.3.d.a})=a_p(f_{100.3.b.c})$. Here $f_{20.3.d.a}$ is a cusp form of character $\chi_{-5}$ with coefficients in $\Z$ while the coefficients of $f_{100.3.b.c}$ are no longer in $\Z$.
	\end{proof}

	As explained before   Proposition \ref{prop:detrho3}, $ \rho_{HD_3,\ell}|_{G(N(c,f))}$ is a direct sum of two 1-dimensional representations and   
	$\det  \rho_{HD_3,\ell}(\Frob_\wp)= \psi_{c,f}(\Frob_\wp) N(\wp)^2$ at prime ideals $\wp$ of $\Z[\zeta_{M(c,f)}, 1/M(c,f)\ell]$. 
	Consequently,
	\begin{equation}\label{eq:tr-Alt-1}
	\Tr \, \text{Alt}^2 \rho_{HD_3,\ell}(\Frob_{\wp})=\det \rho_{HD_3,\ell}(\Frob_\wp)= \psi_{c,f}(\Frob_\wp) N(\wp)^2;
	\end{equation}
	\begin{equation}\label{eq:tr-Sym-1}
	\Tr \, \text{Sym}^2 \rho_{HD_3,\ell}(\Frob_{\wp}) +\det \rho_{HD_3,\ell}(\Frob_\wp)
	=\P (HD_3;\kappa_\wp)^2, 
	\end{equation}where Alt$^2\rho$ and Sym$^2\rho$ stand for the alternating square and symmetric square of a representation $\rho$ respectively.
	
	\medskip
	
	Next we consider the case when $\l\neq 1$.

	\begin{prop}\label{prop:main}Let $\ell$ be a prime and  $c,f\in \Q^\times$ be such that $\alpha_3(f)=\{\f12,f,1-f\},\beta_3(c)=\{1,\frac32-c,\f12+c\}$ form a primitive pair. Let $N(c,f)=lcd(\alpha_2(c,f)\cup \beta_2(c))$ as before. Then for any $\l\in \Z[\zeta_{M(c,f)}]\smallsetminus\{0,1\}$ 
		the 3-dimensional $\ell$-adic Galois representation $ \rho_{{\{\alpha_3(f), \beta_3(c);\l\}}, \ell}|_{G(N(c,f))}$ as in \eqref{eq:rho} possesses the following properties  
		at a  prime ideal $\wp$ of $\Z[\zeta_{N(c,f)}, 1/N(c,f)\ell]$: 
		\begin{equation}\label{eq:tr-Alt}
		\Tr \, \text{Alt}^2 \rho_{\{\alpha_3(f),\beta_3(c);\l\},\ell}|_{G(N(c,f))}(\Frob_{\wp})= \phi_\wp(1-1/\l) N(\wp)\cdot \P (\alpha_3(f),\beta_3(c);1/\l;\kappa_\wp)
		\end{equation}
		and
		\begin{multline}
		\label{eq:tr-Sym}
		\Tr \, \text{Sym}^2 \rho_{HD_3,\ell}|_{G(N(c,f))}(\Frob_{\wp})= \\ \P (\alpha_3(f),\beta_3(c);1/\l;\kappa_\wp)^2- \phi_\wp(1-1/\l) N(\wp)\cdot \P (\alpha_3(f),\beta_3(c);1/\l;\kappa_\wp),
		\end{multline}where $\phi_\wp(\cdot)$ is the quadratic character of $\kappa_\wp$.
		Further, $\text{Sym}^2 \rho_{\{\alpha_3(f),\beta_3(c);\l\},\ell}|_{G(N(c,f))}$ contains a $1$-dimensional sub-representation isomorphic to $\epsilon_\ell^2|_{G(N(c,f))}$.
	\end{prop}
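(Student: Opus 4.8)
The plan is to show that, once restricted to $G(N(c,f))$, the $3$-dimensional representation $\rho := \rho_{\{\alpha_3(f),\beta_3(c);\l\},\ell}$ is, up to an explicit quadratic twist, the symmetric square of a $2$-dimensional weight-one representation $\sigma$, and then to extract both trace formulas and the $\epsilon_\ell^2$-summand from this single structural fact. Passing from $G(M(c,f))$ to $G(N(c,f))$ is exactly what makes the finite-field Clausen formula applicable: writing $\eta$ and $K$ for the nontrivial characters attached to $\alpha_3(f)$ and $\beta_3(c)$, the field $\Q(\zeta_{N(c,f)})$ is designed so that at a prime $\wp$ of $\Z[\zeta_{N(c,f)},1/N(c,f)\ell]$ the product $\eta K$ is a square in $\widehat{\kappa_\wp^\times}$, which is the hypothesis of Theorem \ref{thm:E-G}.

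First I would invoke Theorem \ref{thm:E-G} with $t=1/\l$ and $K$ as above. It yields, over each such $\kappa_\wp$,
\[
\P(\alpha_3(f),\beta_3(c);1/\l;\kappa_\wp)=\phi_\wp(1-1/\l)\bigl(\mathcal P_1(1/\l)\,\mathcal P_2(1/\l)-N(\wp)\bigr),
\]
where $\mathcal P_1,\mathcal P_2$ are the two $_2\P_1$'s in that theorem. Via the finite-field Kummer relation \eqref{eq:Grossen-eta} and the Jacobi-sum dictionary \eqref{eq:Gauss}--\eqref{eq:P-H}, these two $_2\P_1$'s are both proportional to the Frobenius trace of one weight-one $2$-dimensional representation $\sigma$ of $G(N(c,f))$ attached to the datum $\{\alpha_2(c,f),\beta_2(c)\}$, with $\det\sigma|_{G(N(c,f))}=\epsilon_\ell|_{G(N(c,f))}$, and the proportionality constants cancel so that $\mathcal P_1(1/\l)\mathcal P_2(1/\l)=\bigl(\Tr\sigma(\Frob_\wp)\bigr)^2$. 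Since $\Tr\,\text{Sym}^2\sigma=(\Tr\sigma)^2-\det\sigma$, the bracket equals $\Tr\,\text{Sym}^2\sigma(\Frob_\wp)$, and combining with \eqref{eq:trace-cond} gives
\[
\Tr\rho(\Frob_\wp)=\phi_\wp(-1)\P(\alpha_3(f),\beta_3(c);1/\l;\kappa_\wp)=\phi_\wp(1/\l-1)\,\Tr\,\text{Sym}^2\sigma(\Frob_\wp).
\]
By Remark \ref{rk:density} (Chebotarev, with purity supplying semisimplicity) this promotes to an isomorphism $\rho|_{G(N(c,f))}\cong\chi\otimes\text{Sym}^2\sigma$, where $\chi$ is the quadratic character with $\chi(\Frob_\wp)=\phi_\wp(1/\l-1)$.

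Granting this identity, the remaining assertions are formal. Letting $a,b$ be the Frobenius eigenvalues of $\sigma$, so $ab=\det\sigma(\Frob_\wp)=N(\wp)$, the representation $\text{Sym}^2\sigma$ has eigenvalues $a^2,ab,b^2$, and $\text{Alt}^2(\text{Sym}^2\sigma)\cong\text{Sym}^2\sigma\otimes\det\sigma$ has trace $ab\,(a^2+ab+b^2)=N(\wp)\,\Tr\,\text{Sym}^2\sigma$. As $\chi^2=1$ we get $\text{Alt}^2\rho|_{G(N(c,f))}\cong\text{Alt}^2(\text{Sym}^2\sigma)$, hence
\[
\Tr\,\text{Alt}^2\rho(\Frob_\wp)=N(\wp)\,\Tr\,\text{Sym}^2\sigma(\Frob_\wp)=\phi_\wp(1-1/\l)\,N(\wp)\,\P(\alpha_3(f),\beta_3(c);1/\l;\kappa_\wp),
\]
which is \eqref{eq:tr-Alt}; then \eqref{eq:tr-Sym} follows immediately from $\Tr\,\text{Sym}^2\rho=(\Tr\rho)^2-\Tr\,\text{Alt}^2\rho$ together with $(\Tr\rho)^2=\P(\alpha_3(f),\beta_3(c);1/\l;\kappa_\wp)^2$. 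Finally, the $GL_2$-plethysm $\text{Sym}^2\circ\text{Sym}^2\cong\text{Sym}^4\oplus(\det)^{2}$ gives $\text{Sym}^2\rho|_{G(N(c,f))}\cong\text{Sym}^4\sigma\oplus(\det\sigma)^2$, and $(\det\sigma)^2=\epsilon_\ell^2|_{G(N(c,f))}$ is the asserted $1$-dimensional sub-representation; alternatively this summand is forced by the symmetric pairing on $W_\l$ of Theorem \ref{thm:Katz}(iib), whose similitude is $\epsilon_\ell^2$ by purity.

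The main obstacle is the middle step: verifying that the two $_2\P_1$'s really assemble into $(\Tr\sigma)^2$ and, more delicately, pinning down the precise characters, namely that $\det\sigma|_{G(N(c,f))}=\epsilon_\ell$ and that the quadratic twist is exactly $\chi(\Frob_\wp)=\phi_\wp(1/\l-1)$ rather than some other quadratic or Kummer character. This requires carefully tracking the prefactor $\phi_\wp(-1)$ from \eqref{eq:trace-cond}, the Jacobi-sum normalizations relating the $_2\P_1$'s to $\Tr\sigma$ through \eqref{eq:Gauss} and \eqref{eq:Grossen-eta}, and the vanishing over $G(N(c,f))$ of the finite-order part of $\det\sigma$. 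Once these normalizations are confirmed, everything else is formal eigenvalue and plethysm algebra.
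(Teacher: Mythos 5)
Your proposal follows the same architecture as the paper's proof: apply the finite--field Clausen formula (Theorem \ref{thm:E-G}) at $t=1/\l$, identify $\rho_{\{\alpha_3(f),\beta_3(c);\l\},\ell}|_{G(N(c,f))}$ with a finite-order twist of the symmetric square of the $2$-dimensional representation attached to $\{\alpha_2(c,f),\beta_2(c);\l\}$, and then deduce \eqref{eq:tr-Alt}, \eqref{eq:tr-Sym} and the $\epsilon_\ell^2$-summand from eigenvalue algebra and the plethysm $\text{Sym}^2\circ\,\text{Sym}^2\cong \text{Sym}^4\oplus(\det)^2$; that formal part of your argument is correct. The genuine gap is in the middle step, which you flag as ``to be confirmed'' but which is false as stated, not merely unverified. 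The two ${}_2\P_1$'s in Theorem \ref{thm:E-G} are, up to the signs in \eqref{eq:Tr1}, the Frobenius traces of two \emph{different} representations of $G(N(c,f))$: the representation $\rho$ attached to $\{\alpha_2(c,f),\beta_2(c);\l\}$ and the representation $\ol{\rho}$ attached to the complementary datum $\{1-\alpha_2(c,f),\{1,\f12+c\};\l\}$, and $\ol{\rho}\cong \rho^\vee\otimes\epsilon_\ell$. Writing $\det\rho=\chi_\rho\epsilon_\ell$ with $\chi_\rho$ of finite order, one has $\Tr\ol{\rho}(\Frob_\wp)=\chi_\rho^{-1}(\Frob_\wp)\Tr\rho(\Frob_\wp)$, so both of your normalization claims --- that $\mathcal P_1\mathcal P_2=\bigl(\Tr\sigma(\Frob_\wp)\bigr)^2$ and that $\det\sigma|_{G(N(c,f))}=\epsilon_\ell|_{G(N(c,f))}$ --- amount to asserting $\chi_\rho|_{G(N(c,f))}=1$. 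This fails for generic $\l$: by \eqref{eq:Grossen-eta} the ratio of the two ${}_2\P_1$'s equals $\ol K(1/\l)\,\phi_\wp(1/\l-1)$ times a ratio of Jacobi sums, so $\chi_\rho$ contains $\l$-dependent Kummer (power-residue) characters; concretely, for $c=\f12$ one gets $\chi_\rho=\pm\(\frac{1-1/\l}{\cdot}\)_2$, and for $(c,f)=(\frac13,\frac13)$ it contains a sextic residue character of $\l$. Passing from $G(M(c,f))$ to $G(N(c,f))$ makes $\eta K$ a square, which is exactly what Theorem \ref{thm:E-G} requires, but it cannot trivialize characters ramified at primes dividing $\l$. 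So neither your value of $\det\sigma$ nor your identification of the twist as exactly $\phi_\wp(1/\l-1)$ can be confirmed; a proof attempting to verify them would fail at this point.

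The repair is precisely what the paper does: keep both $\rho$ and $\ol{\rho}$, leave $\chi_\rho$ undetermined, and prove the pointwise identity $\chi_\rho^{-1}(\Frob_\wp)\Tr\,\text{Sym}^2\rho(\Frob_\wp)=\Tr\,\rho\otimes\ol{\rho}(\Frob_\wp)-N(\wp)$ by eigenvalue algebra using $u_\wp v_\wp=\chi_\rho(\Frob_\wp)N(\wp)$; Theorem \ref{thm:E-G} then yields $\rho_{\{\alpha_3(f),\beta_3(c);\l\},\ell}|_{G(N(c,f))}\cong \(\frac{1-1/\l}{\cdot}\)_2\otimes\chi_\rho^{-1}\otimes\text{Sym}^2\rho$, with the unknown twist $\chi_\rho^{-1}$ present. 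The point, which your write-up misses, is that all three conclusions are insensitive to $\chi_\rho$: the eigenvalues of the $3$-dimensional representation at $\Frob_\wp$ come out as $s\,N(\wp)u_\wp/v_\wp$, $s\,N(\wp)v_\wp/u_\wp$, $s\,N(\wp)$ with $s=\pm1$, which gives \eqref{eq:tr-Alt} and hence \eqref{eq:tr-Sym}, and in the plethysm the degree-one summand is $\chi_\rho^{-2}(\det\rho)^2=\epsilon_\ell^2$, the two occurrences of $\chi_\rho$ cancelling. Note also that your fallback argument for the last assertion via the symmetric pairing of Theorem \ref{thm:Katz}, iib) has the same kind of defect: purity pins the similitude character down only up to a finite-order character whose cube is trivial, so it does not by itself identify the summand as $\epsilon_\ell^2$. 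Once the $\chi_\rho$ bookkeeping is inserted, your argument coincides with the paper's proof.
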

	
	\begin{proof}
		Write $\rho$ for the $2$-dimensional $\ell$-adic representation $\rho_{\{\alpha_2(c,f), \beta_2(c); \l\}, \ell}$ of $G(N(c,f))$ and by $\ol{\rho}$ the representation $\rho_{\{\ol{\alpha_2}(c,f),\ol{\beta_2}(c) ; \l\}, \ell}$ of the same group, where $\ol{\alpha_2}(c,f) =  1-\alpha_2(c,f)$ and $\ol{\beta_2}(c) = \{1, \frac12 + c\}$. Then at a prime ideal $\wp$ of $\Z[\zeta_{N(c,f)},1/N(c,f) \ell]$ such that $\ord_\wp \l = 0$, we have
		$$\Tr \rho(\Frob_\wp) = -\omega^{(q-1){\frac{1+2f-2c}4}}(
		-1)\P(\alpha_2(c,f), \beta_2(c); 1/\l; \kappa_\wp)$$ and a similar conclusion for $\ol{\rho}$. 
		The eigenvalues $u_\wp, v_\wp$ of $\rho(\Frob_\wp)$ have absolute value $N(\wp)^{1/2}$, and those of $\ol{ \rho}(\Frob_\wp)$ are $N(\wp)/u_\wp$ and $N(\wp)/v_\wp$. Thus the $4$ eigenvalues of $\rho\otimes \ol{\rho}\,(\Frob_\wp)$ are $N(\wp)u_\wp/v_\wp,$ $N(\wp)v_\wp/u_\wp,$  $N(\wp), N(\wp)$. Write $\det \rho = \chi_\rho \epsilon_\ell$ for a finite order character $\chi_\rho$ of $G(N(c,f))$. Then $u_\wp v_\wp = \chi_\rho(\Frob_\wp)N(\wp)$ implies $N(\wp)/u_\wp = \chi_\rho(\Frob_p)^{-1}v_\wp$ and $N(\wp)/v_\wp = \chi_\rho(\Frob_p)^{-1}u_\wp$. Since the eigenvalues of Sym$^2 \rho(\Frob_\wp)$ are $u_\wp^2, v_\wp^2, u_\wp v_\wp$, we get
		$$ \chi_\rho^{-1}\,(\Frob_\wp)\Tr\, \text{Sym}^2\rho(\Frob_\wp) = \Tr\, \rho\otimes \ol{\rho}(\Frob_\wp) - N(\wp).$$
		Theorem \ref{thm:E-G} then implies that 
		$$\Tr \rho_{\{\alpha_3(f),\beta_3(c);\l\},\ell}|_{G(N(c,f))}(\Frob_\wp) = \phi_\wp(1-1/\l)\chi_\rho^{-1}(\Frob_\wp)\Tr\, \text{Sym}^2\rho(\Frob_\wp).$$ In other words, as representations of $G(N(c,f))$, $\rho_{\{\alpha_3(f),\beta_3(c);\l\},\ell}|_{G(N(c,f))}$ is isomorphic to $\chi_\rho^{-1}\otimes \text{Sym}^2\rho$ twisted by the Hilbert quadratic norm residue symbol $\(\frac{1-1/\l}{\cdot}\)_2$ on $\Q(\zeta_{N(c,f)})$. Hence 
		the eigenvalues of $\rho_{\{\alpha_3(f),\beta_3(c),\l\},\ell}|_{G(N(c,f))}(\Frob_\wp)$ are  $$c\cdot N(\wp)u_\wp/v_\wp,\quad c\cdot N(\wp)v_\wp/u_\wp,\quad c\cdot N(\wp), \quad \text{where } \, c=\phi_\wp(1-1/\l).$$ Therefore, 
		
		$$\text{Sym}^2\rho_{\{\alpha_3(f),\beta_3(c);\l\},\ell}|_{G(N(c,f))} \simeq \text{Sym}^2(\chi_\rho^{-1}\otimes\text{Sym}^2 \rho)\simeq \chi_\rho^{-2}\otimes \text{Sym}^2(\text{Sym}^2 \rho)$$
		and
		$$\text{Alt}^2\rho_{\{\alpha_3(f),\beta_3(c);\l\},\ell}|_{G(N(c,f)} \simeq  \text{Alt}^2(\chi_\rho^{-1}\otimes\text{Sym}^2 \rho).$$
		The last relation implies {that} the eigenvalues of $\text{Alt}^2\rho_{\{\alpha_3(c),\beta_3(f);\l\},\ell}|_{G(N(c,f)}(\Frob_\wp)$ are $$N(\wp)^2, \quad N(\wp)^2u_\wp/v_\wp, \quad \text{and } N(\wp)^2v_\wp/u_\wp.$$ This proves \eqref{eq:tr-Alt}. The equality \eqref{eq:tr-Sym} follows from the general identity for a representation $\rho'$ (see \cite[(67.8)]{CR}):  $$\Tr\text{Alt}^2 \rho'+\Tr\text{Sym}^2\rho'=\Tr\rho'\otimes \rho' =(\Tr\rho')^2.$$
		
		For the last assertion, note the following relation for 2-dimensional semisimple representations $\rho$ which can be proved by comparing the characters of both sides : 
		$$ \text{Sym}^2(\text{Sym}^2 \rho) = \text{Sym}^4\rho \oplus (\det\rho)^2.$$
		It follows that the degree-6 representation $\text{Sym}^2 \rho_{\{\alpha_3(f),\beta_3(c);\l\},\ell}|_{G(N(c,f))}$ decomposes into the sum of a degree-5 and a degree-1 sub-representations:
		\begin{eqnarray}\label{eq:sym4}
		\text{Sym}^2\rho_{\{\alpha_3(f),\beta_3(c);\l\},\ell}|_{G(N(c,f))} \simeq (\chi_\rho^{-2}\otimes \text{Sym}^4 \rho) \oplus \chi_{\rho}^{-2}(\det \rho)^2.
		\end{eqnarray}
		Since $\det \rho = \chi_\rho \epsilon_\ell|_{G(N(c,f))}$, the degree-1 sub-representation of $\text{Sym}^2 \rho_{\{\alpha_3(c),\beta_3(f);\l\},\ell}|_{G(N(c,f))}$ is $\epsilon_\ell^2|_{G(N(c,f))}$.
	\end{proof}

	%%%%%%%%%%Proof of the main result%%%%%%%%%%%%%%%%
	\begin{proof}[Proof of Theorems \ref{thm:LLT2main} and \ref{thm:main}]Below we fix a pair $(c,f)$  such that $HD_1(c,f)$ and $HD_2(c,f)$ are primitive 
		and drop the reference to $c, f$ when there is no danger of confusion. Fix a prime $\ell$. 
		Using Lemma \ref{lem:6P5(1)}, we have, at a prime ideal $\wp$ of $\Z[\zeta_M, 1/M\ell]$,
		\begin{equation}\label{eq:P(HD1)}
		\P(HD_1;\kappa_\wp)=\sum_{t\in \kappa_\wp} \phi_\wp(t)\cdot  
		\,\P\left(\alpha_3(f),\beta_3(c);t;\kappa_\wp\right)^2.
		\end{equation}   
		
		Recall  that finite dimensional semisimple   representations {over fields of characteristic zero} of a group of any size are determined by their traces (see \cite[(27.13)]{CR} by Curtis and Reiner). We will use \eqref{eq:P(HD1)} to obtain an isomorphism between two Galois representations of $G(N)$ with different origins. For the left hand side, recall from Theorem \ref{thm:Katz} that $\Tr \rho_{HD_1,\ell}(\Frob_\wp)=-\phi_\wp(-1) \P(HD_1;\kappa_\wp)$. 
		
		For the right hand side, by Katz \cite{Katz,Katz09} there exists an $\ell$-adic sheaf $\mathcal H$ of \bk the affine line \bk $\mathbb A$  over $R=\Z[\zeta_N,1/\ell N]$ such that the action of $G(N)$ on the stalk of $\mathcal H_\l$ at each nonzero $\l \in R$ is a generically 3-dimensional representation,  
		characterized by the following trace formula: At any prime ideal $\wp$ of $ R$, and $\l \in R$ with $\ord_\wp \l=0$
		$$\text{Tr}\, \Frob_\wp(\mathcal H_\l)=\phi_\wp(-1)\P\left(\alpha_3(f),\beta_3(c);1/\l;\kappa_\wp\right).$$ Here and thereafter, to ease our notation, we sometimes identify a representation $\rho'$ with the underlying space $V$ on which it acts, and use $\Frob_\wp(V)$ to denote $\rho'(\Frob_\wp)$.
		By \cite[(30.9)]{CR},  $\text{Tr}\, \Frob_\wp(\mathcal H_\l\otimes \mathcal H_\l)=\P\left(\alpha_3(f),\beta_3(c);1/\l;\kappa_\wp\right)^2.$  
		Now consider the representation $\tilde \rho$ obtained from  $G(N)$ acting on  $V:=H^1_{\acute{e}t}(\mathbb A\otimes_R \overline \Q,\mathcal L_\phi \otimes  \mathcal H^{\otimes 2})$ where $\mathcal L_\phi$ is a rank-1 sheaf on $\mathbb A$ corresponding to the quadratic character $\phi$. At a prime ideal $\wp$ where the representation is unramified, the trace of $\tilde{\rho}(\Frob_\wp)$ on $V$ can be computed from the action of the Frobenius element in Gal$(\ol{\kappa_\wp}/\kappa_\wp)$ on 
		$H^1_{\acute{e}t}(\mathbb A\otimes_{\kappa_\wp} \ol{\kappa_\wp}, \mathcal L_\phi \otimes  \mathcal H^{\otimes 2})$ since $H^i_{\acute{e}t}(\mathbb A\otimes_{\kappa_\wp} \ol{\kappa_\wp}, \mathcal L_\phi \otimes  \mathcal H^{\otimes 2})=0 $ for $i\neq 1$. Expressed in terms of the action of the Frobenius $F_x$ at each point $x \in \kappa_\wp$, the Leftschetz formula gives $\Tr \tilde \rho (\Frob_\wp)=- \sum _{x\in \kappa_\wp^\times} \Tr(x)$ where $$\Tr(x)=\Tr F_x((\mathcal L_\phi \otimes  \mathcal H^{\otimes 2})_x) =\phi_\wp(x) \P(\alpha_3(f),\beta_3(c);1/x;\kappa_\wp)^2.$$ Putting all these together we get from \eqref{eq:P(HD1)} that, up to semisimplification, $(\chi_{-1}\otimes \rho_{HD_1,\ell})|_{G(N)}\cong \tilde \rho.$

		Next we will use this isomorphism to obtain a decomposition of $\rho_{HD_1,\ell}^{ss}|_{G(N)}$. Note that $\mathcal H^{\otimes 2}=\mathcal H\otimes \mathcal H$ decomposes naturally into $\text{Sym}^2\mathcal H\oplus \text{Alt}^2\mathcal H$ as $G(N)$-invariant sheaves. From the discussion of Proposition \ref{prop:main}, we further know that $\text{Sym}^2\mathcal H=\text{Sym}^4\rho \oplus (\det\rho)^2$, where $\rho$ stands for the $\ell$-adic sheaf on $\mathbb A$   such that at each $t \in \mathbb A$, $t \ne 0$, the stalk 
		at $t$ gives rise to the representation in Theorem \ref{thm:Katz} associated to the pair $\alpha_2(c,f), \beta_2(c)$.  
		Accordingly $H^1(\mathbb A\otimes_R \overline \Q,\mathcal L_\phi \otimes  \mathcal H^{\otimes 2})$ decomposes into  $H^1(\mathbb A\otimes_R \overline \Q,\mathcal L_\phi \otimes \text{Alt}^2 \mathcal H)\oplus H^1(\mathbb A\otimes_R \overline \Q,\mathcal L_\phi \otimes (\det\rho)^2)  \oplus H^1(\mathbb A\otimes_R \overline \Q,\mathcal L_\phi \otimes \text{Sym}^4 \rho)$ as $G(N)$-modules, and we write the corresponding decomposition of representations of $G(N)$ on these cohomology spaces in order as 
		$$\tilde \rho = \tilde {\rho}_{AS} \oplus \tilde {\rho}_{ES} \oplus \tilde {\rho}_{SS}.$$  
		This naturally yields the decomposition of the trace at each prime ideal $\wp$ of $R$:  $$\Tr \tilde {\rho}(\Frob_\wp) = -\P(HD_1,\wp)=AS_{c,f}(\wp)+ES_{c,f}(\wp)+SS_{c,f}(\wp)$$ where
		\begin{eqnarray*}
			AS_{c,f}(\wp)&:=&\Tr\,{\tilde{\rho}_{AS}}(\Frob_\wp)\\
			&=& -\Tr \, F_1((\text{Alt}^2 \mathcal H)_1) -\sum_{t\in \kappa_\wp ^\times,t\neq 1}\phi_\wp(t) \Tr\, F_t ((\text{Alt}^2\, \mathcal H)_t)\\
			&\overset{\eqref{eq:tr-Alt-1},\eqref{eq:tr-Alt}} =&-\psi_{c,f}(\Frob_\wp)N(\wp)^2-\sum_{t}\phi_\wp(t) \phi_\wp(1-1/t) N(\wp)\cdot \P(\alpha_3(f),\beta_3(c);1/t;\kappa_\wp)\\&\overset{\eqref{eq:P-by-induction}}=-&\psi_{c,f}(\Frob_\wp)N(\wp)^2-N(\wp)\cdot \P\left(\left\{\f12,\f12,f,1-f\right\},\left\{1,1,\f32-c,\f12+c\right\};1;\kappa_\wp\right)\\
			&\overset{Prop. \ref{prop:detrho3}}=&-N(\wp)^2-N(\wp)\cdot\P(HD_2;\kappa_\wp)\\
			&=& -N(\wp)^2+\phi_\wp(-1)N(\wp) \Tr \, \rho_{HD_2,\ell}(\Frob_\wp);\\
			ES_{c,f}(\wp)&:=& \Tr\,{\tilde{\rho}_{ES}}(\Frob_\wp) 
			=- \sum_{t\neq 0,1} \phi_\wp(t)N(\wp)^2 =  N(\wp)^2;   \\
			SS_{c,f}(\wp)&:=&\Tr\, {\tilde{\rho}_{SS}}(\Frob_\wp). 
		\end{eqnarray*}

		From the above expressions, we see that the representation of $G(N)$ on $H^1(\mathbb A\otimes_R \overline \Q,\mathcal L_\phi \otimes \text{Alt}^2 \mathcal H)\oplus H^1(\mathbb A\otimes_R \overline \Q,\mathcal L_\phi \otimes (\det\rho)^2) $ is isomorphic to $\epsilon_\ell\otimes\chi_{-1}\otimes \rho_{HD_2,\ell}|_{G(N)}$. 
		Consequently, we obtain the following isomorphism:  
		\begin{eqnarray}\label{eq:HD1=2+3}
		{\rho_{HD_1, \ell}}^{ss}|_{G(N)} \simeq (\epsilon_\ell\otimes \rho_{HD_2, \ell})|_{G(N)} \oplus (\tilde {\rho}_{SS} \otimes \chi_{-1})|_{G(N)},
		\end{eqnarray}
		showing the decomposition of ${\rho_{HD_1, \ell}}^{ss}|_{G(N)}$ into a sum of a degree-3 and a degree-2 representations. This proves Theorem \ref{thm:LLT2main}.

		When the pair $(c,f)$ is  among those  listed in (\ref{eq:7pair}), explicit computations as explained in \S \ref{ss:3.3} show that the representation $\rho_{HD_1,\ell}|_{G(N)}$ contains no irreducible subrepresentations of multiplicity larger than 1. Hence $\rho_{HD_1,\ell}$ and $\rho_{HD_2,\ell}$ are semi-simple and (\ref{eq:HD1=2+3}) holds without semi-simplification. This is Theorem \ref{thm:main}. 
	\end{proof}
	\medskip
	
	By Theorem \ref{thm:Katz}, iii), we can decompose the degree-$3$ representation $\rho_{HD_2,\ell}$ as $\rho_{HD_2,\ell}^{prim} \oplus \rho_{HD_2,\ell}^1$, and similarly, $\rho_{HD_1,\ell} =  \rho_{HD_1,\ell}^{prim} \oplus \rho_{HD_1,\ell}^1$. Contained in the proof above is the information on $\rho_{HD_2,\ell}^1$ and $\rho_{HD_1,\ell}^1$.
	
	\begin{cor}\label{cor:rho_{HD2}^1} For any pair $(c,f)$ in \eqref{eq:7pair} and any prime $\ell$, $$\rho_{HD_2(c,f),\ell}^1|_{G(N(c,f))} =  \chi_{-1}\epsilon_\ell |_{G(N(c,f))} \quad  and \quad \rho_{HD_1(c,f),\ell}^1|_{G(N(c,f))} =  \chi_{-1}\epsilon_\ell^2 |_{G(N(c,f))}.$$
	\end{cor}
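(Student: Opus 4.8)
The plan is to extract both one-dimensional components directly from the decompositions already obtained in the proof of Theorem~\ref{thm:main}, separating the primitive piece from its one-dimensional complement by the archimedean absolute values supplied by Theorem~\ref{thm:Katz}~(iii). Abbreviate $N=N(c,f)$; all representations in sight are semisimple and hence determined by their traces at $\Frob_\wp$ (by \cite[(27.13)]{CR}), so it suffices to match Frobenius eigenvalues and their absolute values.

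First I would recall from that proof the isomorphism $\tilde\rho_{AS}\oplus\tilde\rho_{ES}\simeq \epsilon_\ell\otimes\chi_{-1}\otimes\rho_{HD_2,\ell}|_{G(N)}$, together with the fact that $\tilde\rho_{ES}$ is one-dimensional with $\Tr\,\tilde\rho_{ES}(\Frob_\wp)=ES_{c,f}(\wp)=N(\wp)^2$, so that $\tilde\rho_{ES}\simeq \epsilon_\ell^2|_{G(N)}$. Writing $\rho_{HD_2,\ell}=\rho_{HD_2,\ell}^{prim}\oplus\rho_{HD_2,\ell}^1$, the right-hand side becomes $\bigl(\epsilon_\ell\chi_{-1}\otimes\rho_{HD_2,\ell}^{prim}\bigr)\oplus\bigl(\epsilon_\ell\chi_{-1}\otimes\rho_{HD_2,\ell}^1\bigr)|_{G(N)}$. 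Here is where the weight argument enters: by Theorem~\ref{thm:Katz}~(iii) every eigenvalue of $\rho_{HD_2,\ell}^{prim}(\Frob_\wp)$ has absolute value $N(\wp)^{3/2}$, so each constituent of the summand $\epsilon_\ell\chi_{-1}\otimes\rho_{HD_2,\ell}^{prim}$ has Frobenius eigenvalues of absolute value $N(\wp)^{5/2}$, whereas $\tilde\rho_{ES}$ has eigenvalue $N(\wp)^2$. Since $N(\wp)^2\neq N(\wp)^{5/2}$, the one-dimensional $\tilde\rho_{ES}$ can only be matched against the one-dimensional summand $\epsilon_\ell\chi_{-1}\otimes\rho_{HD_2,\ell}^1|_{G(N)}$; comparing characters and using $\chi_{-1}^2=1$ then gives $\rho_{HD_2,\ell}^1|_{G(N)}=\chi_{-1}\epsilon_\ell|_{G(N)}$.

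For the second identity I would feed this back into the decomposition \eqref{eq:HD1=2+3}, namely $\rho_{HD_1,\ell}|_{G(N)}\simeq(\epsilon_\ell\otimes\rho_{HD_2,\ell})|_{G(N)}\oplus(\tilde\rho_{SS}\otimes\chi_{-1})|_{G(N)}$, and split the first summand as $(\epsilon_\ell\otimes\rho_{HD_2,\ell}^{prim})\oplus(\epsilon_\ell\otimes\rho_{HD_2,\ell}^1)|_{G(N)}$. By the first part the one-dimensional piece is $\epsilon_\ell\otimes\rho_{HD_2,\ell}^1|_{G(N)}=\chi_{-1}\epsilon_\ell^2|_{G(N)}$, with Frobenius eigenvalue of absolute value $N(\wp)^2$. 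On the other hand Theorem~\ref{thm:Katz}~(iii) forces all eigenvalues of $\rho_{HD_1,\ell}^{prim}(\Frob_\wp)$ to have absolute value $N(\wp)^{5/2}$, and both $\epsilon_\ell\otimes\rho_{HD_2,\ell}^{prim}$ and $\tilde\rho_{SS}\otimes\chi_{-1}$ have Frobenius eigenvalues of this absolute value, so together they exhaust the $4$-dimensional primitive part. The leftover one-dimensional constituent $\chi_{-1}\epsilon_\ell^2|_{G(N)}$ is therefore forced to be $\rho_{HD_1,\ell}^1|_{G(N)}$, as claimed.

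The only genuinely delicate step is deciding which constituent is the one-dimensional complement, and I expect this to be the main obstacle in principle; but it is resolved cleanly by the absolute-value dichotomy above, since the primitive weight $N(\wp)^{5/2}$ never coincides with $N(\wp)^2$, and this robustly distinguishes the one-dimensional piece even if $\rho_{HD_2,\ell}^{prim}$ happens to split further upon restriction to $G(N)$. Everything else is bookkeeping already carried out in the proof of Theorem~\ref{thm:main}.
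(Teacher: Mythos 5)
Your reconstruction is correct and is essentially the argument the paper leaves implicit when it says the corollary is ``contained in the proof above'': you identify $\tilde\rho_{ES}\simeq\epsilon_\ell^2|_{G(N)}$ from the trace computation $ES_{c,f}(\wp)=N(\wp)^2$, and then use the absolute values from Theorem~\ref{thm:Katz}~(iii) to isolate the one-dimensional complements on both sides of \eqref{eq:HD1=2+3}. Your one unjustified assertion---that $\tilde\rho_{SS}\otimes\chi_{-1}$ has Frobenius eigenvalues of absolute value $N(\wp)^{5/2}$---is not actually needed (and in any case follows afterwards by matching eigenvalue multisets): your absolute-value dichotomy applied directly to the constituent $\chi_{-1}\epsilon_\ell^2|_{G(N)}$, which cannot occur in $\rho_{HD_1,\ell}^{prim}|_{G(N)}$, already forces it to equal $\rho_{HD_1,\ell}^1|_{G(N)}$.
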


	As a corollary, we can write down the following finite field analogue of \eqref{eq:Whipple2}.
	
	\begin{cor} For any pair $(c,f)$ in \eqref{eq:7pair}
		and any prime ideal $\wp$ of $\Z[\zeta_{N(c,f)},1/N(c,f)]$ we have 
		\begin{equation}
		-\P(HD_1;\kappa_\wp)=-N(\wp)\cdot \P(HD_2;\kappa_\wp)+SS_{c,f}(\wp).
		\end{equation}
	\end{cor}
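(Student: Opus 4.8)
The plan is to read the identity directly off the Lefschetz trace decomposition established in the proof of Theorem~\ref{thm:main}. There we introduced the representation $\tilde\rho$ of $G(N)$ on $H^1_{\acute{e}t}(\mathbb A\otimes_R\overline\Q,\mathcal L_\phi\otimes\mathcal H^{\otimes 2})$ and, using the $G(N)$-invariant splitting $\mathcal H^{\otimes 2}=\text{Alt}^2\mathcal H\oplus(\det\rho)^2\oplus\text{Sym}^4\rho$, obtained at each prime ideal $\wp$ of $R=\Z[\zeta_N,1/\ell N]$ the decomposition
\[
\Tr\tilde\rho(\Frob_\wp)=-\P(HD_1;\kappa_\wp)=AS_{c,f}(\wp)+ES_{c,f}(\wp)+SS_{c,f}(\wp).
\]
Since the corollary is exactly this decomposition with the $AS$- and $ES$-terms combined, the argument reduces to inserting the explicit evaluations of those two terms.

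First I would recall the values computed in that proof. Using \eqref{eq:tr-Alt-1} and \eqref{eq:tr-Alt} to evaluate the $\text{Alt}^2$-contribution, the inductive formula \eqref{eq:P-by-induction} to collapse the $t$-sum into a single $\P(HD_2)$-value, and Corollary~\ref{cor:psi} to trivialize $\psi_{c,f}$ on $G(N)$ (so that the determinant part contributes $N(\wp)^2$), one has
\[
AS_{c,f}(\wp)=-N(\wp)^2-N(\wp)\cdot\P(HD_2;\kappa_\wp),\qquad ES_{c,f}(\wp)=N(\wp)^2.
\]
Substituting into the trace decomposition, the two $N(\wp)^2$ contributions cancel, leaving
\[
-\P(HD_1;\kappa_\wp)=-N(\wp)\cdot\P(HD_2;\kappa_\wp)+SS_{c,f}(\wp),
\]
which is precisely the asserted finite field analogue of \eqref{eq:Whipple2}.

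The only point worth flagging is that the same $AS_{c,f}(\wp)$ admits the equivalent expression $-N(\wp)^2+\phi_\wp(-1)N(\wp)\,\Tr\rho_{HD_2,\ell}(\Frob_\wp)$, the two forms being interchanged by the trace formula \eqref{eq:Tr1} of Theorem~\ref{thm:Katz} applied to $HD_2$ (with $n=4$, $a_1=\f12$, $\l=1$), which reads $\Tr\rho_{HD_2,\ell}(\Frob_\wp)=-\phi_\wp(-1)\P(HD_2;\kappa_\wp)$; for the present purpose one simply retains the $\P(HD_2)$-form. Accordingly there is no genuine obstacle here: the corollary is a bookkeeping consequence of the trace identities already assembled in the proof of Theorem~\ref{thm:main}, its content being the cancellation of the Eisenstein-type $N(\wp)^2$ terms arising from $\text{Alt}^2\mathcal H$ and from $(\det\rho)^2$.
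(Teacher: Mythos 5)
Your proof is correct and is essentially identical to the paper's (implicit) argument: the corollary is stated precisely as the bookkeeping consequence of the trace decomposition $-\P(HD_1;\kappa_\wp)=AS_{c,f}(\wp)+ES_{c,f}(\wp)+SS_{c,f}(\wp)$ established in the proof of Theorem \ref{thm:main}, with the evaluations $AS_{c,f}(\wp)=-N(\wp)^2-N(\wp)\P(HD_2;\kappa_\wp)$ (via \eqref{eq:tr-Alt-1}, \eqref{eq:tr-Alt}, \eqref{eq:P-by-induction} and Corollary \ref{cor:psi}) and $ES_{c,f}(\wp)=N(\wp)^2$ cancelling exactly as you describe. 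Your side remark relating the two forms of $AS_{c,f}(\wp)$ via \eqref{eq:Tr1} is also accurate.
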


	\subsection{A proof of Theorem \ref{thm:HD2}}\label{HD2} 
	For any $(c,f)$ as in \eqref{eq:7pair}, $HD_2(c,f)$ and $HD_2(f,c)$ may not agree by definition. Nonetheless the representations $\rho_{HD_2(c,f),\ell}$ and $\rho_{HD_2(f,c),\ell}$ are isomorphic because of the following Lemma.

	\begin{lemma}\label{lem:unorder}Given any $(c,f)$ in \eqref{eq:7pair}, for any prime power $q\equiv 1 \mod M(c,f)$,
		\begin{equation*}
		\P(\alpha_4(f),  \beta_4(c) ;1;\F_q)= \P (  \alpha_4(c),  \beta_4(f);1;\F_q).
		\end{equation*}
	\end{lemma}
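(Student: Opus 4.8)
The plan is to derive the identity from a single application of the finite-field Kummer transformation of Proposition~\ref{prop: Kummer}, specialized to argument $1$. First I would rewrite both sides in the character notation of \eqref{eq:P}. Fix $q\equiv 1 \bmod M(c,f)$ and a generator $\omega$ of $\widehat{\F_q^\times}$, and set $\phi=\omega^{(q-1)/2}$, $\mu=\omega^{(q-1)c}$, $\nu=\omega^{(q-1)f}$. Using the correspondences $\frac12\mapsto\phi$, $1\mapsto\eps$, $\frac32-c\mapsto\phi\ol\mu$, $\frac12+c\mapsto\phi\mu$ and the analogous assignments with $f$, the left-hand side $\P(\alpha_4(f),\beta_4(c);1;\F_q)$ is the ${}_4\P_3$ with numerator characters $(\phi,\phi,\nu,\ol\nu)$ and denominator characters $(\eps,\eps,\phi\ol\mu,\phi\mu)$ (the first $\eps$ being the distinguished entry $B_1$), while the right-hand side $\P(\alpha_4(c),\beta_4(f);1;\F_q)$ is the ${}_4\P_3$ with numerator $(\phi,\phi,\mu,\ol\mu)$ and denominator $(\eps,\eps,\phi\ol\nu,\phi\nu)$.

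Next I would apply the first transformation in Proposition~\ref{prop: Kummer} to the left-hand side, taking the distinguished numerator character to be $A_1=\phi$ and setting $t=1$ (so that both $\frac1t$ and $t$ equal $1$). The transformed numerator is $(A_1,\,A_1\ol{B_2},\,A_1\ol{B_3},\,A_1\ol{B_4})=(\phi,\phi,\mu,\ol\mu)$ and the transformed denominator is $(A_1\ol{A_2},\,A_1\ol{A_3},\,A_1\ol{A_4})=(\eps,\phi\ol\nu,\phi\nu)$, which together with the distinguished $B_1=\eps$ are exactly the parameters of $\P(\alpha_4(c),\beta_4(f);1;\F_q)$. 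Thus the transformation realizes the swap $c\leftrightarrow f$ at the level of the $\P$-function, up to the scalar prefactor $A_1(-1)\prod_{i=2}^4 A_iB_i(-1)$. Computing $\prod_{i=2}^4 A_iB_i=\phi\cdot(\nu\phi\ol\mu)\cdot(\ol\nu\phi\mu)=\phi^3=\phi$ and using $\mu(-1)\ol\mu(-1)=\nu(-1)\ol\nu(-1)=1$, the prefactor equals $\phi(-1)\cdot\phi(-1)=\phi(-1)^2=1$, which gives the claimed equality.

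Because Proposition~\ref{prop: Kummer} is valid for arbitrary multiplicative characters and any $t\ne0$, this argument needs neither primitivity nor genericity and is uniform across all seven pairs in \eqref{eq:7pair}; the only feature of $HD_2$ that is used is the well-poised shape of $\alpha_4,\beta_4$, which is precisely what makes the Kummer-transformed parameters reassemble into the $c\leftrightarrow f$ exchanged datum. I expect no serious obstacle here: the computation is mechanical, and the only points demanding care are (i) keeping track of the column pairing (each numerator character with its partner denominator character) so that the transformed ${}_4\P_3$ is literally $\P(\alpha_4(c),\beta_4(f);1;\F_q)$ and not a reordering carrying a stray Jacobi-sum factor, and (ii) verifying that the prefactor collapses to $1$ rather than to $\phi(-1)$.
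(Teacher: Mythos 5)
Your proof is correct and is essentially the paper's own argument: the paper likewise writes both sides in character form and invokes Proposition~\ref{prop: Kummer} at $t=1$ to swap $c$ and $f$, with your version merely making explicit the transformed parameters and the verification that the prefactor $A_1(-1)\prod_{i=2}^4 A_iB_i(-1)=\phi(-1)^2=1$. No gaps; the bookkeeping of column pairings and the prefactor computation are exactly the two points the paper leaves implicit.
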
 
	\begin{proof}
		
		$$\P(\alpha_4(f), \beta_4(c) ;1;\F_q)=\pPPq43{\phi&\phi&\omega^{(q-1)f}&\omega^{(q-1)(1-f)}}{&\eps& \omega^{(q-1)(\f12-c)}& \omega^{(q-1)(\f12+c)}}{1;q}$$
		
		By Proposition \ref{prop: Kummer}, the right hand side coincides with $ \pPPq43{\phi&\phi&\omega^{(q-1)c}&\omega^{(q-1)(1-c)}}{&\eps& \omega^{(q-1)(\f12-f)}& \omega^{(q-1)(\f12+f)}}{1;q}$ which is $\P (  \alpha_4(c),  \beta_4(f);1;\F_q)$.
	\end{proof}
	
	\medskip
	
	It is clear that when $(c,f)$ is one of the first five pairs, $HD_2$ is also defined over $\Q$. When $(c,f)$ is either $\(\frac 15,\frac25\)$ or $\left(\frac 1{10},\frac3{10}\right)$, one has

	\begin{lemma}\label{lem:HD-Q}
		Let $(c,f)$ be either $\(\frac 15,\frac25\)$ or $\left(\frac 1{10},\frac3{10}\right)$. Then  the  representation $\rho_{HD_2,\ell}$ attached to the hypergeometric datum   $HD_2=\{\alpha_4(f),\beta_4(c);1\}$   can be extended to $G_\Q$.
	\end{lemma}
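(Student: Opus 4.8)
The plan is to verify the Clifford-theoretic criterion of \S\ref{ss:3.2}. Since $\Q(\zeta_{10})=\Q(\zeta_5)$ and $\mathrm{Gal}(\Q(\zeta_{10})/\Q)\cong(\Z/10\Z)^\times$ is cyclic of order $4$, generated by the automorphism $\tau$ with $c(\tau)=3$ (so $\tau:\zeta_5\mapsto\zeta_5^3$), the representation $\rho_{HD_2,\ell}$ extends to $G_\Q$ as soon as it is $\tau$-invariant, i.e. $\rho_{HD_2,\ell}\cong\rho_{HD_2,\ell}^{\tau}\cong\rho_{3\cdot HD_2,\ell}$. By Remark \ref{rk:density} it suffices to match traces at $\Frob_\wp$ for $\wp\mid p$ with $p\equiv1\bmod10$; for such $\wp$ one has $N(\wp)=p$, and since $a_1=\f12$ and $n=4$ are common to both data, the prefactor $(-1)^{n-1}\omega_\wp^{(N(\wp)-1)a_1}(-1)=-\phi_\wp(-1)$ in \eqref{eq:Tr1} is the same on both sides. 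Thus everything reduces to the identity $\P(3\cdot HD_2(c,f);1;\F_p)=\P(HD_2(c,f);1;\F_p)$.

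First I would record the effect of tripling all parameters on the column vectors $\binom{a_i}{b_i}$ and reducing mod $\Z$: a direct check shows that, as multi-sets of columns, $3\cdot HD_2(\f15,\f25)=HD_2(\f35,\f15)$ and $3\cdot HD_2(\f1{10},\f3{10})=HD_2(\f3{10},\f9{10})$, so $\rho_{3\cdot HD_2,\ell}$ is the representation attached to these reordered data. Lemma \ref{lem:unorder} then swaps the roles of $c$ and $f$, giving $\rho_{HD_2(\f35,\f15),\ell}\cong\rho_{HD_2(\f15,\f35),\ell}$ and $\rho_{HD_2(\f3{10},\f9{10}),\ell}\cong\rho_{HD_2(\f9{10},\f3{10}),\ell}$. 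It remains to pass from $HD_2(\f15,\f35)$ to $HD_2(\f15,\f25)$ by $f\mapsto1-f$, and from $HD_2(\f9{10},\f3{10})$ to $HD_2(\f1{10},\f3{10})$ by $c\mapsto1-c$.

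The crux is the resulting \emph{crossing invariance}: for these self-dual data, replacing $f$ by $1-f$ (or $c$ by $1-c$) leaves the $\P$-value unchanged. Such a replacement fixes the multi-sets $\alpha_4,\beta_4$ mod $\Z$ but transposes the pairing of the two non-trivial columns, so by \eqref{eq:P-H} — in which the order-independent factor $H_p$ is untouched — the two $\P$-values differ only by the Jacobi-sum ratio
\[
\frac{J(\psi,\gamma_1)\,J(\ol\psi,\gamma_2)}{J(\ol\psi,\gamma_1)\,J(\psi,\gamma_2)},\qquad \psi=\omega^{(p-1)f},\ \ \gamma_1=\omega^{-(p-1)(3/2-c)},\ \ \gamma_2=\omega^{-(p-1)(1/2+c)}.
\]
Expanding each Jacobi sum as $\mathfrak g(A)\mathfrak g(B)/\mathfrak g(AB)$, the ratio collapses to $\mathfrak g(\ol\psi\gamma_1)\mathfrak g(\psi\gamma_2)/\big(\mathfrak g(\psi\gamma_1)\mathfrak g(\ol\psi\gamma_2)\big)$; because $\psi\ol\psi=\eps$ one has $\psi\gamma_2=\ol{\ol\psi\gamma_1}$ and $\ol\psi\gamma_2=\ol{\psi\gamma_1}$, so the reflection formula $\mathfrak g(\chi)\mathfrak g(\ol\chi)=\chi(-1)p$ turns it into $(\gamma_1^2)(-1)=1$, all intervening characters being non-trivial for these $(c,f)$. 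This gives the crossing invariance and closes both chains, whence $\rho_{3\cdot HD_2,\ell}\cong\rho_{HD_2,\ell}$ and $HD_2$ extends to $\Q$.

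I expect the main obstacle to be the bookkeeping that makes this precise: checking that tripling, the $c\leftrightarrow f$ swap of Lemma \ref{lem:unorder}, and a single crossing really compose to the identity on the column multi-set, and that every Gauss and Jacobi sum entering the ratio is non-degenerate (one needs $c+f\neq\f12$ and $c-f\neq\pm\f12$, which hold here) so that the reflection formula applies. Once the ratio is shown to be $1$, the Chebotarev comparison of Remark \ref{rk:density} finishes the argument uniformly for both pairs.
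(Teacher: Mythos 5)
Your proof is correct, and its skeleton matches the paper's: both verify invariance of $\rho_{HD_2,\ell}$ under conjugation by $\tau:\zeta_5\mapsto\zeta_5^3$ using Lemma \ref{lem:unorder} and a Chebotarev comparison of traces (Remark \ref{rk:density}). There are two genuine differences. The paper first uses self-duality to extend $\rho_{HD_2,\ell}$ to $G_{\Q(\sqrt5)}$ and only then checks $\tau$-invariance over that field, whereas you note that $(\Z/10\Z)^\times$ is cyclic and generated by $3$, so $\tau$-invariance alone settles everything; this is a mild streamlining. More substantively, the paper's displayed chain
\begin{equation*}
\P(\alpha_4(f),\beta_4(c);1;\F_p)=\P(\alpha_4(c),\beta_4(f);1;\F_p)=\P\left(\left\{\tfrac12,\tfrac12,\tfrac15,\tfrac45\right\},\left\{1,1,\tfrac9{10},\tfrac1{10}\right\};1;\F_p\right)
\end{equation*}
credits everything to Lemma \ref{lem:unorder}, but only the first equality is an instance of that lemma: as column data, $\{\alpha_4(c),\beta_4(f)\}$ pairs $\tfrac15$ with $\tfrac1{10}$ and $\tfrac45$ with $\tfrac9{10}$, while the $\tau$-conjugate datum pairs them the other way, so the second equality is exactly the ``crossing invariance'' you isolate. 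Your justification of that step --- via \eqref{eq:P-H}, the order-independence of $H_p$, the relation $\gamma_1\gamma_2=\eps$ coming from $(3/2-c)+(1/2+c)\in\Z$, and the Gauss-sum reflection formula, with the nondegeneracy conditions $c+f\not\equiv\tfrac12$, $c-f\not\equiv\pm\tfrac12\pmod{\Z}$ checked for both pairs --- supplies precisely what the paper leaves implicit, so on this point your write-up is more complete than the paper's. The bookkeeping (tripling, the $c\leftrightarrow f$ swap of Lemma \ref{lem:unorder}, one crossing) is also carried out correctly for both $\left(\tfrac15,\tfrac25\right)$ and $\left(\tfrac1{10},\tfrac3{10}\right)$.
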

	\begin{proof}
		When $(c,f)=(\frac15,\frac25)$, $M = 10$ and $\rho_{HD_2, \ell}$ is a representation of $G(10) = G(5)$. Since $HD_2$ is self-dual and defined over $K=\Q(\zeta_5+\zeta_5^{-1})=\Q(\sqrt 5)$, $\rho_{HD_2, \ell}$ is invariant under the Galois conjugation sending $\zeta_5$ to $\zeta_5^{-1}$ so that it can be extended to $G_K$, the absolute Galois group of $K$. 
		The Galois group $\text{Gal}(K/\Q)$ is generated by $\tau: \zeta_5\mapsto \zeta_5^3$, which maps $\alpha_4(f)=\{\f12,\f12,\frac25,\frac35\}$ to $\{\f12,\f12,\frac15,\frac45\}$ and  $\beta_4(c)=\{1,1,\frac3{10},\frac7{10}\}$ to $\{1,1,\frac9{10},\frac1{10}\}$. For all primes $p \equiv 1 \mod 5$, we have $p \equiv  1 \mod 10$ and by  Lemma \ref{lem:unorder}, $$\P(\alpha_4(f),\beta_4(c);1;\F_p)=\P(\alpha_4(c),\beta_4(f);1;\F_p)=\P\left(\lc \f12,\f12,\frac15,\frac45\rc,\lc 1,1,\frac9{10},\frac1{10}\rc;1;\F_p\right).$$ This shows that $\rho_{HD_2, \ell}$ of $G_K$ is invariant under conjugation by $\tau$, and hence it can be extended to $G_{\Q}$. The case of $(c,f)=\left(\frac 1{10},\frac3{10}\right)$  can be handled in a parallel manner.
	\end{proof}

	Now that we know each $\rho_{HD_2,\ell}$ has an extension to $G_\Q$, we proceed to prove the assertions about $\rho_{HD_2,\ell}^{BCM}$ in Theorem \ref{thm:HD2}. For $(c,f)=(\f12,\f12),(\f12,\frac13),(\f12,\frac16)$, this has been established in \cite{LTYZ}. We prove the remaining cases. 
	If $HD_2$ is defined over $\Q$, from Theorems \ref{thm:Katz} and \ref{thm:KBCM}, we know each $\rho_{HD_2,\ell}^{BCM}\cong\rho_{HD_2,\ell}^{BCM,prim}\oplus \rho_{HD_2,\ell}^{BCM,1}$ where $\rho_{HD_2,\ell}^{BCM,1}$ is 1-dimensional. When $HD_2$ is not defined over $\Q$, we continue to use  $\rho_{HD_2,\ell}^{BCM,prim}$ and $\rho_{HD_2,\ell}^{BCM,1}$ to denote an extension of the degree-2 and degree-1 pieces of $\rho_{HD_2,\ell}$ to $G_\Q$ with integral traces. 
	
	We first deal with $\rho_{HD_2,\ell}^{BCM,prim}$ by using Theorem \ref{thm:modulairty} to prove the modularity of  $\rho_{HD_2,\ell}^{BCM,prim}[t]$ where $t$ is computed by \eqref{eq:t} and identify the corresponding cuspidal Hecke eigenform. Then we determine $\rho_{HD_2,\ell}^{BCM,1}$.
	
	Using the \texttt{Magma} program called ``Hypergeometric Motives over $\Q$" implemented by Watkins \cite{Watkins-HGM-documentation},
	the characteristic polynomial of $\rho_{HD_2,\ell}^{BCM,prim}{[t]}(\text{Fr}_p)$ for $p\ge 5$  
	can be computed efficiently {by choosing $\ell \ne p$} as mentioned in \S\ref{ss:3.3}. Thus for each $p>3$, the trace and determinant of $\rho_{HD_2,\ell}^{BCM,prim}{[t]}(\text{Fr}_p)$ can be read off.  When  $HD_2$ is defined over $\Q$, the characteristic polynomials of $\rho_{HD_2,\ell}^{BCM,prim}{[t]}(\text{Fr}_p)$  all  have  coefficients in $\Z$. The representation $\rho_{HD_2,\ell}^{BCM,prim}$ is unramified outside $\{2,3, \ell\}$ for $(c,f)=(\frac13,\frac13)$ and $(\frac16,\frac16)$, 
	and these $\ell$-adic representations are compatible as $\ell$ varies. Hence   $\rho_{HD_2,\ell}^{BCM,prim}$ and its determinant  can ramify only at $2$ and $3$. 
	To compute it, we may choose $\ell=2$ or 3. 
	As $n=4$ and dimension of $\rho_{HD_2,\ell}^{BCM,prim}$ is 2, by iii) of Theorem \ref{thm:KBCM}, and the absolute value of the roots of $\rho_{HD_2,\ell}^{BCM,prim}(\Frob_p)$ are $p^{3/2}$ for $p>3$, one sees that $\det \rho_{HD_2,\ell}^{BCM,prim}=\chi_{c,f}\epsilon_\ell^3$ where {$\epsilon_\ell$ denotes the unramified  $\ell$-adic cyclotomic character with $\epsilon_\ell(\Frob_p) = p$} and $\chi_{c,f}$ has order at most 2  because of integer coefficients.  Since $\chi_{c,f}$ is unramified outside $2$ and $3$, its conductor  divides 24. From computing the first few characteristic polynomials of $\rho_{HD_2,\ell}^{BCM,prim}{[t]}(\text{Fr}_p)$, we confirm that $\chi_{c,f}$ is the trivial character for each case. So $\det \rho_{HD_2,\ell}^{BCM,prim}=\epsilon_\ell^3$ which implies $\rho_{HD_2,\ell}^{BCM,prim}$ and hence each
	$\rho_{HD_2,\ell}^{BCM,prim}[t]$ {(which is $2$-dimensional) is odd for one and hence all $\ell$}. The irreducibility of $\rho_{HD_2,\ell}^{BCM,prim}$ follows from the lemma below since it has degree $2$ and determinant $\epsilon_\ell^3$.

	\begin{lemma} \label{irreducible} Suppose the hypergeometric datum $HD =\{\alpha,\beta; 1\}$ in Theorem \ref{thm:Katz} is such that, for all primes $\ell$, the representation $\rho_{HD,\ell}^{prim}$ of $G(M)$  has a degree-$2$ subrepresentation which can be extended to a representation $\rho_\ell$ of $G_\Q$ and the $\rho_\ell$ form a compatible family. If $\det \rho_\ell$, up to a finite twist, is $\epsilon_\ell^m$ for an odd natural number $m$, then $\rho_\ell$ is irreducible.
	\end{lemma}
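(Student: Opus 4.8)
The plan is to argue by contradiction, playing the purity of the hypergeometric representations against the classification of one-dimensional de Rham (equivalently, geometric) characters of $G_\Q$, with the oddness of $m$ providing the final incompatibility. Suppose $\rho_\ell$ were reducible for some $\ell$. Since $\rho_{HD,\ell}^{prim}$ arises as a subquotient of the \'etale cohomology of Katz's hypergeometric family, so does its degree-$2$ constituent and the extension $\rho_\ell$ of that constituent to $G_\Q$; in particular $\rho_\ell$ is de Rham at $\ell$ and is pure of some weight $w$ in the sense of Theorem \ref{thm:Katz} (iii) (respectively Theorem \ref{thm:KBCM} (iii)): at almost all $p$ every root of the characteristic polynomial of $\rho_\ell(\Frob_p)$ has absolute value $p^{w/2}$ under each archimedean embedding. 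Being $2$-dimensional and reducible, $\rho_\ell$ has a $1$-dimensional subrepresentation affording a character $\chi_1$ of $G_\Q$, with quotient character $\chi_2$; both $\chi_1$ and $\chi_2$ are de Rham, being a sub and a quotient of a de Rham representation.

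First I would pin down the weight. Comparing determinants, $\det\rho_\ell=\chi_1\chi_2$ agrees up to a finite-order twist with $\epsilon_\ell^m$, so $\det\rho_\ell(\Frob_p)$ has absolute value $p^m$; purity forces this same quantity to equal $p^{w}$, whence $w=m$. Consequently each eigenvalue $\chi_i(\Frob_p)$ has absolute value $p^{m/2}$ for almost all $p$.

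Next I would invoke the one-dimensional Fontaine--Mazur classification (equivalently, the description of algebraic Hecke characters of $\Q$ via class field theory): any de Rham character of $G_\Q$ unramified almost everywhere has the form $\psi\,\epsilon_\ell^{k}$ for a finite-order character $\psi$ and an integer $k\in\Z$. Applying this to $\chi_1$ gives $\chi_1=\psi\,\epsilon_\ell^{k}$, so that $|\chi_1(\Frob_p)|=|\psi(\Frob_p)|\,p^{k}=p^{k}$ for almost all $p$ (as $\psi(\Frob_p)$ is a root of unity). Matching this with $|\chi_1(\Frob_p)|=p^{m/2}$ forces $k=m/2$. But $m$ is odd, so $m/2\notin\Z$, contradicting $k\in\Z$. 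Hence no $\rho_\ell$ is reducible, which proves the lemma.

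The one genuinely delicate point --- and the step I expect to be the main obstacle --- is the assertion that the constituents $\chi_1,\chi_2$ are de Rham, so that the integrality of their Hodge--Tate weights is available; it is precisely this integrality that clashes with the half-integer $m/2$ produced by the oddness of $m$. This rests on $\rho_\ell$ being geometric, inherited from the cohomological origin of $\rho_{HD,\ell}^{prim}$ in Katz's construction. If one prefers to bypass $p$-adic Hodge theory, the same contradiction can be reached by exploiting the hypothesis that the $\rho_\ell$ form a compatible family: then $\chi_1$ is, up to the finite twist, a member of a compatible system of abelian $\ell$-adic characters of $G_\Q$, hence an algebraic Hecke character of $\Q$ of some integral infinity type $k$, and purity again forces $k=m/2$, impossible for odd $m$.
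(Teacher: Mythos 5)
Your proof is correct and follows essentially the same route as the paper's: assume reducibility, observe that the constituent characters of $G_\Q$ must be finite-order twists of \emph{integral} powers of $\epsilon_\ell$, and derive a contradiction between purity (Theorem \ref{thm:Katz}, iii)) and the oddness of $m$. The paper compresses this into one line — writing $\rho_\ell \simeq \chi_1\epsilon_\ell^a \oplus \chi_2\epsilon_\ell^b$ with $a+b=m$ odd, hence $a \ne b$, so the Frobenius eigenvalues cannot all have the same absolute value — whereas you spell out and justify the character-classification step (via de Rham-ness or the compatible-family hypothesis) that the paper asserts without comment.
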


	\begin{proof} Suppose otherwise. Then the semisimplification of $\rho_\ell$ 
		is  of the form $\chi_1 \epsilon_\ell^a \oplus \chi_2 \epsilon_\ell^b$,  where $a, b$ are non-negative integers with $a+b=m$ and $\chi_1, \chi_2$ are characters of $G_\Q$ of finite order.  Thus the eigenvalues of $\rho_{HD,\ell}^{prim}(\Frob_\wp)$ would not have the same  absolute values at almost all prime ideals $\wp$, contradicting Theorem \ref{thm:Katz}, iii). 
	\end{proof}

	To apply Theorem \ref{thm:modulairty} to conclude the modularity of $\rho_{HD_2,\ell}^{BCM,prim}[t]$, choose $\ell>5$ to be a prime which splits completely in $\Q(\zeta_M)$. Then the action of $\rho_{HD_2,\ell}^{BCM,prim}[t]$ restricted to a decomposition group $D_\ell$ at $\ell$ is isomorphic to the action of $\rho_{HD_2, \ell}^{prim}[t]$ restricted to a decomposition group $D_\wp$ at a prime $\wp$ of $\Q(\zeta_M)$ above $\ell$, which is known to be crystalline by Katz \cite{Katz}.

	For $(c,f)=(\frac 13,\frac13)$, the graph of $e_{\alpha, \beta}$ with $\alpha=\{\f12,\f12,\frac13,\frac23\}, \beta=\{1,1,\frac76,\frac56\}$ is as follows. 
	\begin{center}
		\includegraphics[scale=0.3,origin=c]{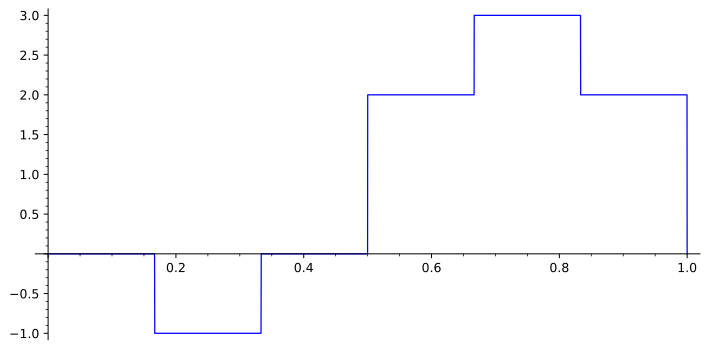}
		
	\end{center}In this case $\min e_{\alpha,\beta}=-1$, $t=1-\frac{4-2}2=0$, and $w(HD_2)=3-(-1)=4.$  This means $\rho_{HD_2,\ell}^{BCM,prim}$ has Hodge-Tate weights $\{0, 3\}$. By Theorem \ref{thm:modulairty}, $\rho_{HD_2,\ell}^{BCM,prim}$ is modular and the corresponding modular form is of weight-4 with integer coefficients and by Theorem \ref{thm:Serre} its level divides  $2^8\cdot 3^5$. From the list of cuspidal Hecke eigenforms of weight-4 and level dividing  $2^8\cdot 3^5$ listed in \href{https://sites.google.com/view/ft-tu/research/database?authuser=0}{\uline{the database}}. We see that only $f_{6.4.a.a}$  
	has the matching $p$-coefficients for all primes within the range $5\le p<100$.
	
	When $(c,f)=(\frac 16,\frac 16)$, $\min =0, \, \max=2$, $w(HD_2)=2$ and $t=-1$. By a similar analysis we know $\rho_{HD_2,\ell}^{BCM,prim}[-1]$ is modular and is isomorphic to the $\ell$-adic Galois representation attached to  $f_{24.2.a.a}$.
	
	Next we determine $\rho_{HD_2,\ell}^{BCM,1}$. We implemented a \texttt{SageMath} program to compute $H_p(\alpha,\beta;1)$.  From  Theorem \ref{thm:KBCM},  we know $ \rho_{HD_2,\ell}^{BCM,1}$ is 1-dimensional and its value at $\Frob_p$ can be computed by  
	$$ \Tr\rho_{HD_2,\ell}^{BCM,1}(\Frob_p)=p^{{(4-m)/2}}H_p(\alpha,\beta;1)- \Tr\rho_{HD_2,\ell}^{BCM,prim}(\Frob_p),$$
	where $m$ is the number of elements in $\{1,1,3/2-c,1/2+c\}$ that are integers, in these cases $m=2$ or 4 and $n-m$ is even.
	So $\rho_{HD_2,\ell}^{BCM,1}(\Frob_p)=\varphi_{c,f}(p)p$ for some character $\varphi_{c,f}$ of order at most $2$ and conductor dividing 24. By computing the first few primes $p$, $\varphi_{c,f}$ is identified quickly. For example, when $\alpha=\{\f12,\f12,\frac13,\frac23\}$, $\beta=\{1,1,\frac76,\frac56\}$, for $p \in \{7,11,13,17,19\}$ the values of $\rho_{HD_2,\ell}^{BCM,1}(\Frob_p)$ computed from $pH_p(\alpha,\beta;1)-\Tr\rho_{HD_2,\ell}^{BCM,prim}(\Frob_p)$ are $7,-11,13,-17,19$, respectively, which agree with $\(\frac{-3}p\)p.$
	
	For the last two cases $(c,f) = \left(\frac 15,\frac25\right)$ and $\left(\frac 1{10},\frac3{10}\right)$, each $\rho_{HD_2, \ell} = \rho_{HD_2,\ell}^{prim} + \rho_{HD_2,\ell}^{1}$ is a representation of $G(10)$. Since $\rho_{HD_2,\ell}$ and $\rho_{HD_2,\ell}^{1}$ can be extended to $G_\Q$, so can $\rho_{HD_2,\ell}^{prim}$, and each extension  
	is unique up to twists by the $4$ characters of $G_\Q$ trivial on $G(10)$. Unlike the previous $5$ cases, there is no preferred choice; we shall choose extensions with integral traces. From iii) of Theorem \ref{thm:Katz}  and numeric computation we know \bk $\det\rho_{HD_2,\ell}^{prim}=(\epsilon_\ell^3)|_{G(10)}$. Thus any extension of $\rho_{HD_2,\ell}^{prim}$ from $G(10)$ to $G_\Q$ has determinant either $\epsilon_\ell^3$ or $\chi_5\otimes \epsilon_\ell^3$, and hence is odd. It is also irreducible by Lemma \ref{irreducible}. 
	By Corollary \ref{cor:rho_{HD2}^1},   
	$\rho_{HD_2,\ell}^{1}|_{G(20)} = \epsilon_\ell|_{G(20)}$. Thus we can obtain the  trace of $\rho_{HD_2,\ell}^{BCM,prim}$ at any $\Frob_p$ with  $p\equiv 1\mod 20$  from computing $\Tr \rho_{HD_2,\ell}(\Frob_\wp)$ at any prime ideal $\wp$ of $\Z[\zeta_{20}]$ above such $p$ and coprime to $\ell$ which, by Remark \ref{rk:density}, allows us to identify the corresponding modular form up to quartic twists, parallel to the first 5 cases. The degree-1 piece can be determined accordingly.
	
	\subsection{A proof of Theorem \ref{thm:2}} 
	For each pair $(c,f)$ listed in \eqref{eq:7pair}, we will proceed in a way similar to the proof of Theorem \ref{thm:HD2} by dealing with $\rho_{HD_1,\ell}^{BCM,prim}$ and then its $1$-dimensional complement $\rho_{HD_1,\ell}^{BCM,1}$ which  can be determined the same way as the end of the proof of Theorem \ref{thm:HD2}.

	\begin{prop}\label{cor:1}
		For each pair $(c,f)$ in \eqref{eq:7pair}, the 4-dimensional representation  $\rho_{HD_1,\ell}^{BCM,prim}$ is the sum of two odd degree-2 irreducible automorphic representations of $G_\Q$.
	\end{prop}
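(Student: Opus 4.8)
The plan is to take the $G(N)$-level splitting furnished by Theorem \ref{thm:main} and promote it to a decomposition over $G_\Q$, then establish the required properties of each $2$-dimensional piece separately. First I would isolate the primitive part of that splitting. Combining the isomorphism \eqref{eq:HD1=2+3} with Corollary \ref{cor:rho_{HD2}^1}, the one-dimensional complement $\rho_{HD_1,\ell}^1|_{G(N)}=\chi_{-1}\epsilon_\ell^2|_{G(N)}$ coincides with $(\epsilon_\ell\otimes\rho_{HD_2,\ell}^1)|_{G(N)}$, the one-dimensional part of the degree-$3$ summand $(\epsilon_\ell\otimes\rho_{HD_2,\ell})|_{G(N)}$. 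Cancelling this common summand yields
$$\rho_{HD_1,\ell}^{prim}|_{G(N)}\;\simeq\;(\epsilon_\ell\otimes\rho_{HD_2,\ell}^{prim})|_{G(N)}\;\oplus\;(\tilde\rho_{SS}\otimes\chi_{-1})|_{G(N)},$$
a sum of two $2$-dimensional $G(N)$-modules that share no irreducible constituent, as one checks by comparing Frobenius traces at split primes $p\equiv1\bmod N$ (Watkins's program against the data of Theorem \ref{thm:HD2}).

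Next I would descend to $G_\Q$ by Clifford theory, as recalled in \S\ref{ss:3.2}. Theorem \ref{thm:KBCM} supplies the $4$-dimensional $G_\Q$-representation $\rho_{HD_1,\ell}^{BCM,prim}$, whose restriction to $G(N)$ is the sum above. Since $\rho_{HD_2,\ell}^{prim}$ extends to $G_\Q$ (Theorem \ref{thm:HD2}), the summand $(\epsilon_\ell\otimes\rho_{HD_2,\ell}^{prim})|_{G(N)}$ is invariant under $\mathrm{Gal}(\Q(\zeta_N)/\Q)$-conjugation; because it shares no constituent with its complement, the sum of the corresponding isotypic components is a $G_\Q$-stable $2$-dimensional subspace, cutting out a subrepresentation $\sigma$ of $\rho_{HD_1,\ell}^{BCM,prim}$ that restricts to $(\epsilon_\ell\otimes\rho_{HD_2,\ell}^{prim})|_{G(N)}$ and hence agrees with $\epsilon_\ell\otimes\rho_{HD_2,\ell}^{BCM,prim}$ up to a character of $G_\Q/G(N)$. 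By semisimplicity of $\rho_{HD_1,\ell}^{BCM,prim}$, the complement $\tau$ is a $2$-dimensional $G_\Q$-subrepresentation as well, so $\rho_{HD_1,\ell}^{BCM,prim}\simeq\sigma\oplus\tau$; the exact finite twist relating $\sigma$ to $\epsilon_\ell\otimes\rho_{HD_2,\ell}^{BCM,prim}$ is pinned down by matching traces in each residue class modulo $N$.

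It then remains to verify that $\sigma$ and $\tau$ are odd, irreducible and automorphic. For $\sigma$ all three properties are inherited from $\rho_{HD_2,\ell}^{BCM,prim}$ of Theorem \ref{thm:HD2}, of which $\sigma$ is a Tate twist composed with a finite-order character twist. For $\tau$ I would record that each Frobenius eigenvalue has absolute value $p^{5/2}$ by Theorem \ref{thm:KBCM}(iii), so $\det\tau$ is a finite-order character times $\epsilon_\ell^{5}$ (confirmable by computing $\det\rho_{HD_1,\ell}^{BCM,prim}/\det\sigma$ with Watkins's program); the oddness of the exponent makes $\tau$ odd and, via Lemma \ref{irreducible}, forces $\tau$ to be irreducible. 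For modularity I would choose $\ell>5$ splitting completely in $\Q(\zeta_M)$, so that $\tau$ restricted to a decomposition group at $\ell$ is crystalline by Katz (exactly as in the proof of Theorem \ref{thm:HD2}), with Hodge-Tate weights read off from $w(HD_1)$ and the Tate twist \eqref{eq:t}; Theorem \ref{thm:modulairty} then attaches a Hecke eigenform to $\tau$, which one identifies by matching a few Frobenius traces against LMFDB within the level bound of Theorem \ref{thm:Serre}.

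I expect the main obstacle to be the $G_\Q$-descent of the middle step: guaranteeing that the two $2$-dimensional pieces are each preserved by $G_\Q$ rather than interchanged or entangled, and determining the exact finite character separating the genuine summand of $\rho_{HD_1,\ell}^{BCM,prim}$ from $\epsilon_\ell\otimes\rho_{HD_2,\ell}^{BCM,prim}$. A secondary difficulty is the quartic-twist ambiguity for the last two pairs $(\tfrac15,\tfrac25)$ and $(\tfrac1{10},\tfrac3{10})$, where $\rho_{HD_2,\ell}^{prim}$ extends to $G_\Q$ only up to a character of $G_\Q/G(10)$ (Lemma \ref{lem:HD-Q}), and checking the crystalline weight hypotheses of Theorem \ref{thm:modulairty} uniformly across the weight-$6$ and weight-$2$ cases.
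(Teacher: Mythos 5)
Your proposal follows essentially the same route as the paper's proof: take the $G(N)$-splitting \eqref{eq:HD1=2+3}, descend the two $2$-dimensional pieces to $G_\Q$ (the paper asserts this directly from the extendability of $\rho_{HD_1,\ell}$ and $\epsilon_\ell\otimes\rho_{HD_2,\ell}$ to $G_\Q$, while you spell out the Clifford-theoretic isotypic-component argument --- a welcome elaboration, not a different method), then get irreducibility from Lemma \ref{irreducible}, crystallinity from Katz, and modularity from Theorem \ref{thm:modulairty} together with Theorem \ref{thm:HD2}.

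There is, however, one step that fails as written: for the complementary piece $\tau$ (the paper's $\sigma_{HD_1,sym,\ell}$) you argue that ``$\det\tau$ is a finite-order character times $\epsilon_\ell^{5}$, and the oddness of the exponent makes $\tau$ odd.'' That inference is invalid: oddness means $\det\tau(c)=-1$ at complex conjugation $c$, and a determinant such as $\chi_{-1}\epsilon_\ell^{5}$ has odd exponent yet satisfies $\chi_{-1}(c)\epsilon_\ell^{5}(c)=(-1)(-1)=+1$. So you must additionally show that the finite-order part of $\det\tau$ is an \emph{even} character; this is not a formality, since oddness is both part of the proposition's assertion and a hypothesis of Theorem \ref{thm:modulairty}. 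The paper closes exactly this point by explicit determination: $\det\rho_{HD_1,\ell}^{BCM,prim}=\epsilon_\ell^{10}$ (computed as in the proof of Theorem \ref{thm:HD2}) and $\det\sigma_{HD_1,alt,\ell}=\epsilon_\ell^{5}$ or $\chi_5\epsilon_\ell^{5}$ from Theorem \ref{thm:HD2}, whence $\det\sigma_{HD_1,sym,\ell}$ has the same form; since $\chi_5$ is even ($5\equiv 1\bmod 4$), both summands are odd. Your parenthetical suggestion to compute $\det\rho_{HD_1,\ell}^{BCM,prim}/\det\sigma$ with Watkins's program is precisely the needed computation, but the conclusion to extract from it is the parity of the finite character, not merely the exponent. (For your $\sigma$ no such issue arises: twisting an odd $2$-dimensional representation by any finite-order character $\nu$ changes the determinant by $\nu^2$, which is even, so oddness of $\sigma$ really is inherited from Theorem \ref{thm:HD2}, as you say.)
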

	\begin{proof} Fix a  pair $(c,f)$ in \eqref{eq:7pair}. Since  $HD_1$ is defined over $\Q$ and $HD_2$ can be extended to $\Q$,  the representations $\rho_{HD_1,\ell}|_{G(N)}$ and $(\epsilon_\ell\otimes \rho_{HD_2, \ell})|_{G(N)}$ can be extended to $G_\Q$ as $\rho_{HD_1,\ell}^{BCM}$ and $\epsilon_\ell\otimes \rho_{HD_2, \ell}^{BCM}$ respectively. Hence by (\ref{eq:HD1=2+3}), $(\tilde {\rho}_{SS} \otimes \chi_{-1})|_{G(N)}$ can be extended to $\tilde {\rho}_{SS}^{BCM} \otimes \chi_{-1}$ of $G_\Q$.  Therefore $\rho_{HD_1,\ell}^{BCM}$ decomposes as the sum of a finite twist of $\epsilon_\ell\otimes \rho_{HD_2, \ell}^{BCM}$ and a finite twist of $\tilde {\rho}_{SS}^{BCM} \otimes \chi_{-1}$, written as \begin{equation}\label{eq:prim-decomp}
		\rho_{HD_1,\ell}^{BCM,prim}\cong \sigma_{HD_1,alt,\ell} \oplus \sigma_{HD_1,sym,\ell}
		\end{equation} where $\sigma_{HD_1,sym,\ell}$ and $\sigma_{HD_1,alt,\ell}$ are both 2-dimensional. Here we use $sym$ and $alt$ in the subscripts in order to keep track of their origin. It is useful to keep in mind that  they will play asymmetric roles in general. From Theorem \ref{thm:HD2}, $\det\sigma_{HD_1,alt,\ell}=\det (\epsilon_\ell\otimes \rho_{HD_2,\ell}^{prim})=\epsilon_\ell^5$  or $\chi_5\epsilon_\ell^5$.  
		We also determine  $\det\rho_{HD_1,\ell}^{BCM,prim}=\epsilon_\ell^{10}$ the same way as the proof of Theorem \ref{thm:HD2}. Hence $\sigma_{HD_1,sym,\ell}$ and $\sigma_{HD_1,alt,\ell}$ have the same determinant and are both odd. They are irreducible by Lemma \ref{irreducible}.  
		Since $\rho_{HD_1,\ell}^{BCM,prim}$ and $\sigma_{HD_1,alt,\ell}$ are both crystalline for a large prime $\ell \equiv 1 \mod N$ by Katz \cite{Katz}, so is $\sigma_{HD_1,sym,\ell}$. By Theorem \ref{thm:modulairty}, both $\sigma_{HD_1,alt,\ell}$ and $\sigma_{HD_1,sym,\ell}$ are  modular.  
	\end{proof}

	Our next task is to determine the modular forms $\ff_{c,f}$ and $\fg_{c,f}$ corresponding to $\sigma_{HD_1(c,f),sym,\ell}$ and $\sigma_{HD_1(c,f),alt,\ell}$ respectively in the decomposition \eqref{eq:prim-decomp} of $\rho_{HD_1,\ell}^{BCM,prim}$.
	
	As each $HD_1$ is defined over $\Q$, using \text{Magma}, one can compute the factorization of the characteristic polynomial of $\rho_{HD_1,\ell}^{BCM,prim}[t](\text{Fr}_p)$ over $\Z$ for any unramified prime $p$, which 
	is always a product of two %factorizes into 
	degree-2 polynomials over $\Z$.  For example,
	\medskip
	
	\texttt{H1:=HypergeometricData([1/2,1/2,1/2,1/2,1/3,2/3],[1,1,1,1,1/6,5/6]);}
	
	\texttt{Factorization(EulerFactor(H1,1,7));}
	
	\medskip
	
	The output is
	
	$<16807*\$.1^2 - 56*\$.1 + 1,1>,$
	
	$<16807*\$.1^2 + 88*\$.1 + 1, 1>.$
	\medskip
	
	Our next task is to assign the degree-2 factors to  $\sigma_{HD_1,sym,\ell}$ and  $\sigma_{HD_1,alt,\ell}$ respectively. 
	By Theorem \ref{thm:main}, $\sigma_{HD_1,alt,\ell}|_{G(N)}$ and $\(\epsilon_\ell\otimes\rho_{HD_2,\ell}^{BCM,prim}\)|_{G(N)}$ agree. This implies that at each unramified prime $p$, $\Tr \sigma_{HD_1,alt,\ell}(\Frob_p)$ and $p\cdot \Tr  \rho_{HD_2,\ell}^{BCM,prim}(\Frob_p)$ differ by a root of unity. Since they both have integer coefficients, they differ at most by a quadratic character. 
	This allows us to recognize which quadratic factor is from  $\sigma_{HD_1,alt,\ell}|_{G(N)}$. For instance, 
	\medskip
	
	\texttt{H2:=HypergeometricData([1/2,1/2,1/3,2/3],[1,1,1,1]);}
	
	\texttt{Factorization(EulerFactor(H2,1,7));}
	
	\medskip
	
	The output is
	
	$ <343*\$.1^2 - 8*\$.1 + 1, 1> $
	
	\noindent from which we know the first quadratic factor computed from H1 is from $\sigma_{HD_1,alt,\ell}.$ The separation of the quadratic factors allows us to compute the trace and determinant of $\sigma_{HD_1,sym,\ell}$ and $\sigma_{HD_1,alt,\ell}$ respectively. Similar to  the proof of Theorem \ref{thm:HD2}, the determination of $\ff_{c,f}$ and $\fg_{c,f}$ boils down to the application of Theorem \ref{thm:Serre} as outlined in \S \ref{ss:3.3}.

	\section{Hypergeometric values and periods of modular forms}\label{sec:mf}
	
	In this section we study untruncated hypergeometric series. Our main purpose is to relate hypergeometric values and periods of modular forms.
	\subsection{Modular forms and differential equations}
	In \cite{Zagier-top-diff},  Zagier used
	$$\pFq43{\f12&\f12&\f12&\f12}{&1&1&1}{1}=\frac{1}{2\pi i}\oint_{|t|=1} \pFq21{\f12&\f12}{&1}{t}\pFq21{\f12&\f12}{&1}{1/t}\frac{dt}t$$ with $t=\l(\tau)$, the modular lambda function, to obtain
	\begin{equation*}
	\pFq43{\f12&\f12&\f12&\f12}{&1&1&1}{1}=\frac{16}{\pi^2}L(f_{8.4.a.a},2).
	\end{equation*}
	In \cite{Stiller84} Stiller studied periods of modular forms and their relation to the Eichler-Shimura parabolic cohomology theory. 
	It is known in the literature (see \cite{Stiller84} by Stiller and \cite[Theorem 1]{Yang04} by Yang) that
	\begin{theorem}
		Let $\G$ be a  Fuschian group commensurable with $SL_2(\Z)$ and let $t$ be a non-constant modular function of $\G$. Given a weight-$k$ meromorphic  modular form $f$ on $\G$, write $f(\tau)=F(t(\tau))$ locally. Then $F(t),\tau F(t),$ $\cdots, \tau^k F(t)$
		satisfy an order $k+1$ differential equation in variable $t$ which has only regular singularities.
	\end{theorem}Because of this reason the period formulas   below often involve polynomials in $\tau$.
	\medskip
	
	To prove Theorem \ref{thm:period}, we first recall  some relevant properties of hypergeometric functions \cite[Chapters 15, 16]{DLMF} (For more detail, see \cite{AAR, Slater}  for example.): 
	\begin{enumerate}
		\item[(P.1)] { \href{https://dlmf.nist.gov/16.8}{\cite[(16.8.7)]{DLMF}}} The  functions  
		$$
		(-z)^{-a_1}\pFq32{a_1&1+a_1-b_2&1+a_1-b_3}{&1+a_1-a_2&1+a_1-a_3}{\frac1z}
		~{\rm and}~ \pFq32{a_1&a_2&a_3}{&b_2&b_3}z$$  satisfy the same 3rd order differential equation.
		\item[(P.2)] \label{item: Clausen HDEs} { \cite[\href{https://dlmf.nist.gov/15.10}{\S 15.10}, \href{https://dlmf.nist.gov/16.8}{\S 16.8}]{DLMF}} The following four functions
		$$\pFq21{a&b}{&a+b+\frac12}{z}^2, \quad z^{\f12-a-b}  \pFq21{a&b}{&a+b+\f12}{z}\pFq21{\f12-a&\f12-b}{&\frac32-a-b}{z},$$ $$\pFq32{2a&2b&a+b}{&2a+2b&a+b+\frac 12}{z},\quad
		z^{\f12-a-b}\pFq32{\f12&a-b+\f12&b-a+\f12}{&a+b+\f12&\frac32-a-b}{z}
		$$
		satisfy the same 3rd order differential equation (ODE) 
		and hence are linearly dependent.
		
		\item[(P.3)] {\href{https://dlmf.nist.gov/16.12}{\cite[(16.12.2)]{DLMF}}}  The Clausen's formulas give two identities related to the functions in (P.2): 
		\begin{equation}\label{eq:Clausen}
		\pFq21{a&b}{&a+b+\frac12}{z}^2=\pFq32{2a&2b&a+b}{&2a+2b&a+b+\frac 12}{z}
		\end{equation}
		\begin{equation*}
		\pFq21{a&b}{&a+b+\f12}{z}\pFq21{\f12-a&\f12-b}{&\frac32-a-b}{z}=\pFq32{\f12&a-b+\f12&b-a+\f12}{&a+b+\f12&\frac32-a-b}{z}.
		\end{equation*}
	\end{enumerate}
	Moreover,  $ \pFq21{a&b}{&a+b+\f12}{z}$ and $z^{\f12-a-b}\pFq21{\f12-a&\f12-b}{&\frac32-a-b}{z}$ satisfy the same ordinary differential equation.

	\medskip

	From (P.1) above, we denote by $L$ the order-3 ordinary differential operator which annihilates  $
	z^{-1}\pFq32{\f12&a-b+\f12&b-a+\f12}{&a+b+\f12&\frac32-a-b}{1/z}$ and $\pFq32{\f12&a-b+\f12&b-a+\f12}{&a+b+\f12&\frac32-a-b}{z} $, then $L\circ L$ annihilates 
	both
	$$
	z^{-1}\pFq32{\f12&a-b+\f12&b-a+\f12}{&a+b+\f12&\frac32-a-b}{1/z}\pFq32{\f12&a-b+\f12&b-a+\f12}{&a+b+\f12&\frac32-a-b}{z} $$ and   $$z^{-1/2} \pFq32{\f12&a-b+\f12&b-a+\f12}{&a+b+\f12&\frac32-a-b}{z}^2 .$$  Letting $z=u^2$, we conclude that $$\pFq32{\f12&a-b+\f12&b-a+\f12}{&a+b+\f12&\frac32-a-b}{u^2}^2 \quad {\rm and}\quad \pFq21{a&b}{&a+b+\f12}{u^2}^4$$ satisfy the same differential equation.

	\medskip
	\subsection{The occurrence of $\ff_{c,f}$ in $w(HD_1)=6$ cases} We show how $\ff_{c,f}$ in Theorem \ref{thm:2} for the three cases where $w(HD_1)=6$ arises naturally in connection with a $_2F_1$ function.
	
	\begin{theorem}\label{thm:wt6} When $(c,f)=(\f12,\frac12), (\f12,\frac13),(\frac13,\frac13)$, the monodromy group $\G(c,f)$ of the order-2 ODE satisfied by $\pFq21{\frac{1+2f-2c}4&\frac{3-2c-2f}4}{&\frac32-c}{z^2}$ is a genus 0 arithmetic Fuchsian group. The space of weight-6 holomorphic modular forms  for $\G(c,f)$
		is $1$-dimensional, generated by a Hecke eigenform $\fh_{c,f}$ with eigenvalue $a_p(\ff_{c,f})$ for almost all primes $p>7$. 
	\end{theorem}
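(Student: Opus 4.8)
The plan is to exhibit the order-$2$ ODE as a quadratic pullback of a Gauss hypergeometric equation, read off the signature of its projective monodromy group $\G(c,f)$, confirm arithmeticity and genus $0$, compute the dimension of the weight-$6$ forms, and finally identify the resulting eigenform with $\ff_{c,f}$ through the Galois representation $\sigma_{HD_1(c,f),sym,\ell}$ of Theorem \ref{thm:2}. First I would note that $\pFq21{\frac{1+2f-2c}4&\frac{3-2c-2f}4}{&\frac32-c}{z^2}$ is precisely $F(\alpha_2(c,f),\beta_2(c);z^2)$ for the rank-$2$ datum $\alpha_2(c,f),\beta_2(c)$ of \S\ref{pf:main}, so the ODE it satisfies in $z$ is the pullback under $w=z^2$ of the Gauss equation of $F(\alpha_2(c,f),\beta_2(c);w)$.

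Next I would compute the local exponent differences of the $w$-equation at $0,1,\infty$ and transport them through the double cover $w=z^2$, which is ramified of index $2$ over $w=0,\infty$ (doubling the exponent differences there) and unramified over $w=1$ (which splits into $z=\pm1$). This yields
\[(\tfrac12,\tfrac12):\ (0;2,2;2),\qquad (\tfrac12,\tfrac13):\ (0;2,2,3;1),\qquad (\tfrac13,\tfrac13):\ (0;2,2,3,3;0),\]
the last being cocompact. Since $z$ is a Hauptmodul for $\G(c,f)$, the Schwarz map identifies the quotient with the $z$-line $\P^1$, so each $\G(c,f)$ has genus $0$.

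For arithmeticity I would recognize the three $w$-monodromy groups as the triangle groups $\Delta(2,\infty,\infty)$, $\Delta(2,6,\infty)$ and $\Delta(2,6,6)$, all of which appear on Takeuchi's list of arithmetic triangle groups (the first two commensurable with $\mathrm{PSL}_2(\Z)$, the third cocompact of quaternionic type); as $\G(c,f)$ is the index-$2$ subgroup cut out by the cover $w=z^2$, it is again arithmetic. With the signatures in hand, the dimension formula for holomorphic modular forms on a Fuchsian group gives $\dim S_6(\G(c,f))=1$ in every case — directly for the cocompact $(\tfrac13,\tfrac13)$ (where $M_6=S_6$), and after subtracting the Eisenstein contribution of the cusps in the two cuspidal cases. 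Being $1$-dimensional, $S_6(\G(c,f))=\C\,\fh_{c,f}$ is automatically spanned by a Hecke eigenform.

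Finally, to match eigenvalues I would use the relation between the order-$2$ ODE and weight-$6$ forms recalled above, together with item (P.2) and Clausen's formula \eqref{eq:gen-Clausen}: via Eichler–Shimura the weight-$6$ cusp forms on $\G(c,f)$ are cut out by $H^1$ of $\mathrm{Sym}^4$ of the rank-$2$ solution local system, which is exactly the sheaf $\rho$ attached to $\alpha_2(c,f),\beta_2(c)$ in the proof of Theorem \ref{thm:main}. Hence the $2$-dimensional $\ell$-adic representation attached to $\fh_{c,f}$ is the space $H^1(\mathbb A,\mathcal L_\phi\otimes\mathrm{Sym}^4\rho)$, i.e.\ the piece $\sigma_{HD_1(c,f),sym,\ell}$; by Theorem \ref{thm:2} this is modular and corresponds to $\ff_{c,f}$, so comparing Frobenius traces gives $a_p(\fh_{c,f})=a_p(\ff_{c,f})$ for almost all $p>7$. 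The hard part will be this last identification: justifying the signature computation through the branched double cover and, above all for the cocompact case $(\tfrac13,\tfrac13)$, matching the Galois representation of the quaternionic eigenform $\fh_{c,f}$ with $\sigma_{HD_1,sym,\ell}$ and transferring it by Jacquet–Langlands to the classical newform $\ff_{c,f}$.
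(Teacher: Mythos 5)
Your signature computations through the double cover $w=z^2$, the appeal to Takeuchi's list for arithmeticity, the genus-$0$ claim via the Hauptmodul, and the dimension count $\dim S_6(\G(c,f))=1$ are all correct and essentially identical to the paper's argument (the paper records the same signatures $(\infty,\infty,2,2)$, $(2,2,3,\infty)$, $(2,2,3,3)$ and applies Shimura's dimension formula). The genuine gap is in your final step. The assertion that ``the $2$-dimensional $\ell$-adic representation attached to $\fh_{c,f}$ is $H^1(\mathbb A,\mathcal L_\phi\otimes\mathrm{Sym}^4\rho)$, i.e.\ the piece $\sigma_{HD_1(c,f),sym,\ell}$'' is not an application of Eichler--Shimura; it is exactly the hard comparison that would need proof. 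Eichler--Shimura only identifies $S_6(\G(c,f))\oplus\overline{S_6(\G(c,f))}$ with Betti parabolic cohomology of the solution local system. To get from there to $\sigma_{HD_1,sym,\ell}$ you need: (i) a Galois representation attached to $\fh_{c,f}$ in the first place, which in the cocompact case $(\frac13,\frac13)$ requires realizing $\G(c,f)$ as a quaternionic unit group acting on a Shimura curve and invoking its arithmetic theory; (ii) a comparison, defined over $\Q$ and compatible with Hecke operators on one side and Frobenius on the other, between the analytic solution sheaf and Katz's \'etale sheaf $\rho$ --- this is not formal and is nowhere supplied; (iii) the vanishing $S_6(G(c,f))=0$ for the three triangle groups, so that only the $\mathcal L_\phi$-twisted (anti-invariant) summand of $H^1$ of the double cover survives; and (iv) control of the twist ambiguity: the geometric construction only pins down a representation of $G(N)$, and extensions to $G_\Q$ differ by characters trivial on $G(N)$, so your argument as stated yields $a_p(\fh_{c,f})=\chi(p)\,a_p(\ff_{c,f})$ for some quadratic character $\chi$ rather than the exact equality claimed. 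You flag (i)--(ii) as ``the hard part,'' but deferring them means the eigenvalue identification --- the substance of the theorem --- is not proved.

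By contrast, the paper never compares Galois representations in this proof; it stays entirely on the automorphic side. It identifies $\G(c,f)$ explicitly as a matrix group --- the congruence subgroup $4C^0\supset\G_0(8)$ for $(\f12,\f12)$, an explicit index-$2$ subgroup of $\G_0^+(3)$ (analyzed via its intersection with $\G_0(3)$ and a supergroup of signature $(3,3,\infty)$) for $(\f12,\frac13)$, and the quaternionic group $\G_0^6(1)$ of discriminant $6$ for $(\frac13,\frac13)$ --- and then exhibits the generator of $S_6$ concretely: $f_{8.6.a.a}$, $\eta(2\tau)^{12}=f_{4.6.a.a}$, and, via Jacquet--Langlands (available once the group is known to be $\G_0^6(1)$), a form with the same eigenvalues as $f_{6.6.a.a}$. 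Theorem \ref{thm:2} enters only to recognize these forms as $\ff_{c,f}$. To salvage your route you would either have to carry out these explicit group identifications anyway --- at which point you have reproduced the paper's proof --- or establish the Betti--\'etale, Hecke--Frobenius compatibility for Katz's hypergeometric sheaves, a much harder undertaking than the statement itself.
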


	\begin{proof}
		We first compute the monodromy group $G(c,f)$ of the 2nd order ODE satisfied by \begin{equation}\label{eq:F}
		{}_2F_1(x):=\pFq21{\frac{1+2f-2c}4&\frac{3-2c-2f}4}{&\frac32-c}{x}=\pFq21{a&b}{&a+b+\f12}{x},
		\end{equation} (namely letting $a=\frac{1+2f-2c}4, b=\frac{3-2c-2f}4$)  which is an arithmetic triangle group with angles $\pi|a+b-\f12|,\frac{\pi}2,\pi|a-b|$. For related background, see \cite[\S3.2.4]{Win3X}. 
		Then under the change of variable $x=z^2$, the singularities of the ODE in the variable $z$ satisfied by $ {}_2F_1(z^2)$ are $\{\infty,0,1,-1\}$ and the monodromy group is an index-2 subgroup $\G(c,f)$ of $G(c,f)$.
		We will give a description for $\G(c,f)$ as a  matrix group for each case, and then describe  the space $S_6(\G(c,f))$ of weight-6 holomorphic modular forms on $\G(c,f)$ which vanish at each cusp if there is any.  
		
		In what follows, we use the signature $(e_1,e_2,\cdots, e_r)$ to denote a genus-zero group $\G$ which has  $r$ in-equivalent elliptic points and cusps together of respective order $e_1$, $e_2$, $\cdots$, $e_r$, where $e_i\in \Z_{>1}\cup\{\infty\}$.  According to \cite[Theorem 2.23]{Shimura-introduction} by Shimura, when $k \ge 4$  is even, the dimension of the space of weight-$k$ holomorphic cusp forms on $\G$ is 
		
		$$
		\dim S_k(\G)=-k+1 + \sum_{i=1}^r  \lfloor \frac{k}2(1-1/e_i)\rfloor -\#\{cusps\}.
		$$ 
		It follows that $\dim S_6(\G(c,f))=1$ when $w(HD_1)=6$.

		For $(c,f)=(\f12,\f12)$, by \eqref{eq:F},  $_2F_1(x)=\pFq21{\frac14&\frac14}{&1}x$. In this case the signature of $G(\f12,\f12)$ is $(\infty,\infty,2)$, and $G(\f12,\f12)$ is isomorphic to $\G_0(2)$. The 
		group $\G(\f12,\f12)$  has signature  $ (\infty,\infty,2,2)$ and is isomorphic to the level 4 index-6 subgroup of $SL_2(\Z)$ labelled by \href{https://mathstats.uncg.edu/sites/pauli/congruence/csg0.html}{$4C^0$} in \cite{Congruence}. The group $4C^0$ is a supergroup of $\G_0(8)$. By the above dimension formula, $\dim S_{6}(\G(\f12,\f12))=-5+ 2\lfloor \frac 64 \rfloor+2(3-1)=1$. The space is generated by $f_{8.6.a.a}(\tau)$,  which is $\ff_{\f12,\f12}(\tau)$ in Theorem \ref{thm:2}. \bk
		
		For $(c,f)=(\f12,\frac13)$,  $_2F_1(x)=\pFq21{\frac16&\frac13}{&1}x$. In this case the signature of $G(\f12,\frac13)$ is $(\infty,6,2)$. The group $G(\f12,\frac13)$ can be realized as  
		$$
		\G_0^+(3)=\left\langle w_3:=\frac{1}{\sqrt 3}\M0{-1}30,T:=\M1101\right\rangle, 
		$$ and its index-2 subgroup  $\G(\f12,\frac13)$ has signature $(2,2,3,\infty)$. This group  and $\G_0(3)$ are generated by
		$$
		\left\{w_3, \, (w_3 T^{-1})^2=\M{-1}1{-3}2, T^2\right\}, \quad \mbox{and} \quad \left\{T,\, \M{-1}1{-3}2,\, \M{-1}03{-1} \right\},
		$$ respectively.  Their intersection  has signature \bk $(3,3,\infty,\infty)$, generated by
		$$
		\left\{ T^2, \,\M{-1}1{-3}2,\,\M10{-6}1, \,\M{-1}{-1}32=T^{-1}\M{-1}1{-3}2 T \right\}.
		$$This {intersection} group has a supergroup, which is the index-2 subgroup of $SL_2(\Z)$ with signature $(3,3,\infty)$. It has a weight-6 cuspform  $f_{4.6.a.a}(\tau)=\eta(2\tau)^{12}$,  which is $\ff_{\f12,\frac13}(\tau)$ in Theorem \ref{thm:2}. \bk So $S_6(\G(\f12,\frac13))=\langle f_{4.6.a.a}(\tau)\rangle.$

		For $(c,f)=(\frac13,\frac13)$,  $_2F_1(x)=\pFq21{\frac14&\frac5{12}}{&\frac 76}x$. Its monodromy group   has signature  $(6,6,2)$, and the  index-2 subgroup  $\G(\frac13,\frac13)$ has signature $(3,3,2,2)$, which can be realized as the subgroup $\G_0^6(1)$ of $PSL_2(\R)$ 
		arising from the norm $1$ group of the maximal order of the quaternion algebra over $\Q$ with discriminant $6$. By the Jacquet-Langlands correspondence \cite{JL},  $\fh_{c,f}$  the generator of $S_6(\G(\frac13,\frac13))$ and  the Hecke eigenform $f_{6.6.a.a}(\tau)$,  which is $\ff_{\frac13,\frac13}(\tau)$ in Theorem \ref{thm:2}, \bk are both Hecke eigenforms with the same eigenvalue at each prime $p>3$.
	\end{proof}
	
	\subsection{The case $(c,f)=(\f12,\f12)$ - a proof of Theorem \ref{thm:period}} The goal of this subsection is to  prove Theorem \ref{thm:period}.  
	To continue, we first recall some known results below. The Jacobi theta functions
	\begin{eqnarray*}
		\theta_2(\tau)&=&\displaystyle \sum_{k\in \Z} q^{(2k+1)^2/8}=2\frac{\eta(2\tau)^2}{\eta(\tau)},\\ \theta_3(\tau)&=&\sum_{k\in\Z}q^{k^2/2}=\frac{\eta(\tau)^5}{\eta^2(\tau/2)\eta^2(2\tau)}, \\ \theta_4(\tau)&=&\sum_{k\in\Z}(-1)^kq^{k^2/2}=\frac{\eta^2(\tau/2)}{\eta(\tau)}
	\end{eqnarray*} satisfy the identities
	$\theta_2(\tau)^4+\theta_4(\tau)^4=\theta_3(\tau)^4$ and $\theta_2^4(\tau)+\theta_3^4(\tau)=2E_2(\tau)-E_2(\tau/2)$, where $\displaystyle E_2(\tau)=1-24\sum_{n=1}^\infty \frac{nq^n}{1-q^n}$ with $q=e^{2\pi i \tau}$ is the normalized weight-2 Eisenstein series. Let $\lambda(\tau)=\left(\frac{\theta_2(\tau)}{\theta_3(\tau)}\right)^4$ be the modular $\l$-function. 
	The following three derivative formulas hold (see \cite[Proposition 7]{Zagier-modularform}, \cite[Lemma 1]{LLT}):
	\begin{equation*}
	\frac{d \eta(\tau)}{d\tau}= \eta(\tau)\cdot \frac{2 \pi i }{24} E_2(\tau),
	\end{equation*}
	$$
	\frac{d\theta_4(\tau)}{d\tau}=\theta_4(
	\tau)\cdot \frac{2 \pi i }{24} \cdot \(E_2(\tau/2)-E_2(\tau)\),
	$$\begin{equation*}
	\frac{d\l(\tau)}{d\tau}= \l(\tau)\cdot \pi i \cdot\theta_4^4(\tau).
	\end{equation*}

	We now return to hypergeometric functions.

	Let  $F(t)=\pFq32{\f12&\f12&\f12}{&1&1}{t}$. Then 
	\begin{equation}\label{eq:41}
	\pFq65{\f12&\f12&\f12&\f12&\f12&\f12}{&1&1&1&1&1}{1} =\frac{1}{2\pi i}\oint_{|t|=1}  F(t)F(1/t)\frac{dt}t. 
	\end{equation}

	Using the Clausen formula \eqref{eq:Clausen} with $a=b=\frac14$
	and the following Kummer quadratic formula
	\begin{equation*}
	\pFq21{\f12&\f12}{&1}{z}=(1-z)^{-1/2}\pFq21{\frac14&\frac14}{&1}{\frac{-4z}{(1-z)^2}},
	\end{equation*}
	we have the following identity
	$$
	\pFq32{\f12&\f12&\f12}{&1&1}{\frac{-4z}{(1-z)^2}}=(1-z)
	\pFq21{\f12&\f12}{&1}{z}^2.
	$$
	We now relate the discussion to modular forms by letting $z=\l(2\tau)$ and\begin{equation}\label{eq:t2}
	t_2(\tau)=\frac{-4\l(2\tau)}{(1-\l(2\tau))^2}=-64\frac{\eta(2\tau)^{24}}{\eta(\tau)^{24}},
	\end{equation} which is a Hauptmodul of $\Gamma_0(2)$. It satisfies 
	\begin{align*}
	&t_2\(-\frac1{2\tau} \) = \frac1{t_2(\tau)},\\
	&\frac{d t_2(\tau)}{d \tau}= \frac{4(\l(2\tau)+1)}{(\l(2\tau)-1)^3}\frac{d \l(2\tau)}{d \tau}=\frac{8\pi i \l(2\tau)(\l(2\tau)+1)}{(1-\lambda(2\tau))^3} \theta_4^4(2\tau) .
	\end{align*}

	A classical result says
	$$
	\theta_3(\tau)^4=\pFq{2}{1}{\frac 1{2} &\frac 12}{&1}{\lambda(\tau)}^2, \quad  \theta_4(\tau)^4 =(1- \l(\tau)) \pFq{2}{1}{\frac 1{2} &\frac 12}{&1}{\lambda(\tau)}^2.
	$$
	It follows that
	$$ \pFq32{\f12&\f12&\f12}{&1&1}{t_2(\tau)}= 
	\theta_4^4(2\tau)=\frac{\eta(\tau)^8}{\eta(2\tau)^4} \quad {\rm and}\quad  \pFq32{\f12&\f12&\f12}{&1&1}{1/t_2(\tau)}={16}\tau^2 \frac{\eta(2\tau)^8}{\eta(\tau)^4}.
	$$
	Also, we have
	$\displaystyle
	\frac{1+\l(2\tau)}{1-\l(2\tau)} =1+32\left(\frac{\eta(4\tau)}{\eta(\tau)}\right)^8,
	$
	and thus
	\[
	\pFq32{\f12&\f12&\f12}{&1&1}{t_2(\tau)}\pFq32{\f12&\f12&\f12}{&1&1}{1/t_2(\tau)}\frac{dt_2}{t_2}
	~={ 16}\cdot 2\pi i \tau^2 \cdot f_{8.6.a.a}(\tau/2) { d\tau}, 
	\]where $f_{8.6.a.a}(\tau/2)=\eta^{12}(\tau)+32\eta^4(\tau)\eta(4\tau)^8=-\left(\eta^{12}(\tau)-2\frac{\eta(2\tau)^{24}}{\eta(\tau)^4\eta(4\tau)^8}\right).$ Using \eqref{eq:41}, we get the first claim of Theorem \ref{thm:period}.
	
	\medskip

	Next we demonstrate the appearance of $\fg_{c,f}= f_{8.4.a.a}(\tau)=\eta(2\tau)^4\eta(4\tau)^4$ in the second claim of Theorem \ref{thm:period}  by showing
	\begin{equation*}\label{eq:g} \pFq32{\f12&\f12&\f12}{&1&1}{t_2(\tau)}\pFq43{\f12&\frac54&\f12&\f12}{&\frac14&1&1}{1/t_2(\tau)}\frac{dt_2}{t_2}\sim_e -64\tau f_{8.4.a.a}(\tau) d\tau,
	\end{equation*}
	where the notation $\sim_e$ means the two differentials differ by an exact differential form.
	
	Observe that
	$$\pFq43{\frac54& a&b&c}{&\frac14&d&e}{1/z}=-4 z\frac{d}{dz}\pFq32{ a&b&c}{&d&e}{1/z}+ \pFq32{ a&b&c}{&d&e}{1/z}
	$$
	provided that the series converge. Using 
	\begin{align*}
	\pFq76{\f12&\frac54&\f12&\f12&\f12&\f12&\f12}{&\frac14&1&1&1&1&1}{1}
	=&\frac{1}{2\pi i}\oint_{|z|=1}\pFq43{\frac54& \f 12&\f12 &\f12}{&\frac14&1&1}{1/z} \pFq32{\f 12&\f12 &\f12}{&1&1}z\frac{dz}{z},
	\end{align*}
	and taking $z=t_2$ as \eqref{eq:t2}, we have
	\begin{align*}
	\pFq76{\f12&\frac54&\f12&\f12&\f12&\f12&\f12}{&\frac14&1&1&1&1&1}{1}
	=&\frac{1}{2\pi i}\oint_{|t_2|=1}\pFq43{\frac54& \f 12&\f12 &\f12}{&\frac14&1&1}{1/t_2} F(t_2)\frac{dt_2}{t_2}.\\
	=&\frac{1}{2\pi i}\oint_{|t_2|=1} \(-4t_2 \frac{d}{dt_2} F(1/t_2)+ F(1/t_2)\) F(t_2)\frac{dt_2}{t_2},
	\end{align*}
	where   $F(t)=\pFq32{\f12&\f12&\f12}{&1&1}{t}$ as before.

	In the computation below, we use the following notations: $\theta_i:=\theta_i(2\tau)$, $k:=\frac{\theta_2^2}{\theta_3^2}$, $\l=k^2$. We rewrite $F$-functions as
	$$
	F(t_2(\tau))=\theta_4^4, \quad    F\(\frac 1{t_2(\tau)}\)=4{\tau^2}\theta_3^4 k.
	$$

	Firstly,
	$$
	-4t_2  F(t_2)\frac{d}{dt_2} F(1/t_2)\frac{dt_2}{t_2}=-16\theta_4^4\frac{d(\tau^2 \theta_3^4k)}{d\tau}d\tau,
	$$
	and
	\begin{align*}
	F(1/t_2)F(t_2) \frac{dt_2}{t_2}=&8\pi i \tau^2\theta_3^4\theta_4^4k\cdot \theta_4^4\frac{\l+1}{1-\l} d\tau =8\pi i \tau^2\theta_3^4\theta_4^4k\cdot \left(\theta_4^4+32\frac{\eta^8(4\tau)}{\eta^4(2\tau)}\right)\\
	=& 8\pi i \tau^2\theta_3^4\theta_4^4k\cdot (\theta_4^4+2\theta_2^4) d\tau=8\pi i \tau^2\theta_3^4\theta_4^4k\cdot (\theta_3^4+\theta_2^4)d\tau.
	\end{align*}
	
	Noting 
	$$
	{d(\tau^2 k\theta_3^4\theta_4^4)}=\theta_4^4d(\tau^2 k\theta_3^4)+4\tau^2 k\theta_3^4\theta_4^4\frac{d\theta_4}{\theta_4} %\sim_e 0
	$$
	and 
	$$
	\frac{d\theta_4}{\theta_4}=\frac{2 \pi i }{24} \cdot 2\(E_2(\tau)-E_2(2\tau)\) d \tau,
	$$
	one has
	\begin{multline*}
	-4t_2  F(t_2)\frac{d}{dt_2} F(1/t_2)\frac{dt_2}{t_2}\sim_e  -16\(-4\tau^2 k\theta_3^4\theta_4^4\frac{d\theta_4}{\theta_4}\)=  64\tau^2 k\theta_3^4\theta_4^4\(\frac{d\theta_4}{\theta_4}\)\\
	= 64\tau^2 k\theta_3^4\theta_4^4\frac{2\pi i}{24}\cdot 2\(E_2(\tau)-E_2(2\tau)\)d\tau= 8\pi i\tau^2 k\theta_3^4\theta_4^4\cdot \frac{4}{3}\(E_2(\tau)-E_2(2\tau)\)d\tau.
	\end{multline*}

	Hence,
	\begin{eqnarray*}
		&& F(1/t_2)F(t_2) \frac{dt_2}{t_2}-  4t_2  F(t_2)\frac{d}{dt_2} F(1/t_2)\frac{dt_2}{t_2}\\ &\sim_e& 8\pi i \tau^2\theta_3^4\theta_4^4k\cdot  (\theta_3^4+\theta_2^4+\frac{4}{3}\(E_2(\tau)-E_2(2\tau)\))d\tau\\
		&=&8\pi i \tau^2\theta_3^4\theta_4^4k\cdot  (2E_2(2\tau)-E_2(\tau)+\frac{4}{3}\(E_2(\tau)-E_2(2\tau)\))d\tau\\
		&=&8\pi i \tau^2\theta_3^4\theta_4^4k\cdot \frac1{12} (8E_2(2\tau)+4E_2(\tau))d\tau\\
		&=&8\pi i \tau^2\theta_3^4\theta_4^4k\cdot \frac1{12}\frac{24}{2\pi i}\frac{d(\eta(2\tau)^4\eta(\tau)^4)}{\eta(2\tau)^4\eta(\tau)^4}
		=8\tau^2\cdot 4 d(\eta(2\tau)^4\eta(\tau)^4). 
	\end{eqnarray*}
	Thus,
	\begin{eqnarray*}
		&&\pFq76{\frac 12&  \frac 54&\frac 12&\frac 12&\frac 12&\frac 12&\frac 12}{& \frac14&1&1&1&1&1}{1}=\frac{1}{2\pi i}\oint_{ |t_2|=1} 32 \tau^2 d\, \eta(2\tau)^4
		=\frac{32}{2\pi i}\(-2\oint_{ |t_2|=1} \tau \eta(2\tau)^4\eta(\tau)^4d\tau \).
	\end{eqnarray*}

	Finally we consider the path $|t_2|=1$.   As a Hauptmodul of $\G_0(2)$, $t_2(\tau)$ has a simple pole at $0$ and satisfies
	$  t_2\(\frac{i}{\sqrt 2}\)=-1$ and   $t_2\(\frac{1+i}{2}\)=1$, where $i/\sqrt{2}$ is fixed by the Atkin-Lehner involution 
	$w_2$ on $\Gamma_0(2)$. We interpret $|t_2|=1$ as the hyperbolic geodesic from $\frac{1+i}2$ to $\frac{-1+i}2$.   
	Wrapping up the above discussion, we conclude  the two identities stated in Theorem \ref{thm:period}. 
	\footnote{If we denote
		$
		g(\tau)=f_{8.6.a.a}(\tau/2)$, then
		\begin{align*}
		g(\tau\pm 1/2) =&i^{\pm} \left(\left(\frac{\eta(2\tau)^{3}}{\eta(\tau)\eta(4\tau)}\right)^{12}-32\eta(2\tau)^{12}\left(\frac{\eta(4\tau)}{\eta(\tau)}\right)^{4} \right)=i^{\pm}f_{16.6.a.a}\left (\frac{\tau}2\right),
		\end{align*}where $f_{16.6.a.a}$ is the quadratic twist of $f_{8.6.a.a}$ by $\chi_{-1}$.
		It follows that
		\begin{equation}
		_6F_5(HD_1(\f12,\f12))= -4i \left(\int_{i/2}^{i\infty}f_{16.6.a.a}\left (\frac{\tau}2\right)(4\tau^2+1) d\tau\right).
		\end{equation}}

	%%%%%%%%%%%%%%%%%%%%%%%%%%%%%%(1/2,1/3)%%%%%%%%%%%%%
	\subsection{The case $(c,f)=(\f12,\frac13)$} In this subsection, we will prove the following identity
	
	$$
	\frac1{\pi}  \pFq 65{\f12&\f12&\f12&\f12&\frac13&\frac23}{&\frac56&\frac76&1&1&1}{1}=6i\oint_{ |t_3|=1}  \(\frac13+\tau+\tau^2\)  \cdot(f_{4.6.a.a}(\tau /2)-27f_{4.6.a.a}(3\tau/2)) d\tau,$$ where 
	\begin{equation}\label{eq:t3}
	\displaystyle     t_3(\tau)=4\(\frac{1}{3\sqrt 3}\frac{\eta^6(\tau)}{\eta^6(3\tau)}+3\sqrt 3\frac{\eta^6(3\tau)}{\eta^6(\tau)}\)^{-2}
	\end{equation}
	is a Hauptmodul of $\G_0^+(3)$  taking values $0$, $1$, $\infty$ at the elliptic points of order $6$, $2$, and at the cusp (of order $\infty$), respectively.  To {proceed}, % prove it, 
	we use
	$$
	\pFq 65{\f12&\f12&\f12&\f12&\frac13&\frac23}{&\frac56&\frac76&1&1&1}{1}=\frac{1}{2\pi i}\oint_{|t|=1} \pFq32{\f12&\f12&\f12}{&\frac56&\frac76}{1/t}\pFq32{\f12&\frac13&\frac23}{&1&1}{t}\frac{dt}t
	$$

	The second  $_3F_2$ term can be handled by the Clausen formula \eqref{eq:Clausen}
	\begin{equation*}
	\pFq32{\f12&\frac13&\frac23}{&1&1}{4z(1-z)}=\pFq21{\frac13&\frac23}{&1}{z}^2.
	\end{equation*}
	
	To deal with the first one, we use Property (P.2).  
	By a straightforward verification, one has
	\begin{align*}
	\frac{\sqrt 3}{\pi}&(-4z(1-z))^{-1/2}\pFq32{\f12&\f12&\f12}{&\frac76&\frac56}{\frac1{4z(1-z)}}\\
	=& \frac13\pFq21{\frac13&\frac23}{&1}{z}^2+\frac i{\sqrt{3}}\pFq21{\frac13&\frac23}{&1}{z}\pFq21{\frac13&\frac23}{&1}{1-z}-\frac 13\pFq21{\frac13&\frac23}{&1}{1-z}^2.
	\end{align*}
	
	When $z=\(1+\eta(\tau)^{12}/27\eta(3\tau)^{12}\)^{-1}$,  we can express a point on the upper-half plane by $$\tau=\frac i{\sqrt 3} \pFq21{\frac13&\frac23}{&1}{1-z}/\pFq21{\frac13&\frac23}{&1}{z} .$$ Such results are part of Ramanujan's alternative theory of elliptic functions. The details can be found in \cite{BBG-Ramanujan} by Berndt, Bhargava and Garvan for example. 
	The previous equation becomes
	$$
	\frac{\sqrt 3}{\pi}(-4z(1-z))^{-1/2}\pFq32{\f12&\f12&\f12}{&\frac76&\frac56}{\frac1{4z(1-z)}}=\(\frac13+\tau+\tau^2\)\pFq32{\f12&\frac13&\frac23}{&1&1}{4z(1-z)}.
	$$
	Note that $\(1+\eta(\tau)^{12}/27\eta(3\tau)^{12}\)^{-1}$ is the modular function $s^3$ in the work  \cite{BBG}, where $s=c/a$ 
	and $a,c$ are two weight-1 modular forms  recalled in \cite[\S 2]{LLT}.
	From the formula \eqref{eq:t3},
	$$
	\frac{dt_3}{t_3}=\frac{3(2s^3-1)}{s(s^3-1)}ds=-2\pi i\cdot a^2(2s^3-1)d\tau
	$$
	and
	$$
	\pFq32{\f12&\frac13&\frac23}{&1&1}{t_3}=\pFq21{\frac13&\frac23}{&1}{s^3}^2=a^2.
	$$
	We conclude that
	\begin{align*}
	\pFq32{\f12&\f12&\f12}{&\frac56&\frac76}{1/t_3}&\pFq32{\f12&\frac13&\frac23}{&1&1}{t_3}\frac{dt_3}{t_3}\\
	=& \(\frac13+\tau+\tau^2\) \frac{\pi}{\sqrt 3}(-t_3)^{1/2} \pFq32{\f12&\frac13&\frac23}{&1&1}{t_3}^2\frac{dt_3}{t_3}\\
	=&\(\frac13+\tau+\tau^2\) \frac{4\pi^2}{\sqrt 3}(s^3(1-s^3))^{1/2}a^6(2s^3-1)d\tau\\
	=&-12\pi^2\cdot \(\frac13+\tau+\tau^2\)  \cdot(f_{4.6.a.a}(\tau /2)-27f_{4.6.a.a}(3\tau/2)) d\tau.
	\end{align*}
	
	%%%%%%%%%%%%%%%%%%%%%%(1/2,1/6)%%%%%%%%%%%%%%%%%%%
	
	\subsection{The case $(c,f)=(\f12,\frac16)$}In this case the unit root  of the local zeta-function for $HD_1$ may not be unique.  Despite it, we still have the following  conjectural congruence in \cite{Long18} for any prime $p\ge 5$, 
	\begin{equation*}
	\pFq65{\f12&\f12&\f12&\f12&\frac16&\frac56}{&1&1&1&\frac43&\frac23}{1}_{p-1}\overset ? \equiv a_p(f_{8.4.a.a}) \mod p^3,
	\end{equation*}  which is verified for  the  primes between 4 and 500, showing the exponent 3 sharp.
	
	The archimedean counterpart is  
	\begin{equation}\label{eq:F1/2-1/6}
	\pFq65{\f12&\f12&\f12&\f12&\frac16&\frac56}{&1&1&1&\frac43&\frac23}{1}
	={16}
	\oint_{ |t_2|=1}\left( f_{8.4.a.a}\left (\frac{\tau}2\right)+27f_{8.4.a.a}\left (\frac{3\tau}2\right)\right ) d\tau,
	\end{equation} where $t_2$ is  given in \eqref{eq:t2}. To prove \eqref{eq:F1/2-1/6}, we start  with the following Lemma. 
	%checked numerically
	
	\begin{lemma}\label{lem:233-3F2}
		The following identity holds near $x=0$ when both sides converge: 
		\begin{equation*}
		\pFq32{\f12&\frac16&\frac56}{&\frac{4}{3}&\frac2{3}}{\frac{x(x+4)^3}{4(2x-1)^3}}=\frac{4(1-2x)^{1/2}}{4+x}.
		\end{equation*}
	\end{lemma}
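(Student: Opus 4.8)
The plan is to recognize the left-hand side as, after the change of variable $z=\phi(x)$ with $\phi(x)=\frac{x(x+4)^3}{4(2x-1)^3}$, the unique holomorphic solution at the origin of a third-order hypergeometric ODE, and then to verify that the algebraic function on the right is also such a solution. First I would record that the series is \emph{well-poised}: with $a_1=\frac12$ one has $1+a_1=\frac32=\frac16+\frac43=\frac56+\frac23$, so the data have the shape $\{a,b,c;1+a-b,1+a-c\}$ with $a=\frac12,\,b=\frac16,\,c=\frac56$, which is the structural reason to expect an algebraic evaluation. By the Proposition in \S2.1, $\Phi(z):={}_3F_2[\frac12,\frac16,\frac56;\frac43,\frac23;z]$ satisfies $D\Phi=0$, where
$$D=\theta\Big(\theta+\tfrac13\Big)\Big(\theta-\tfrac13\Big)-z\Big(\theta+\tfrac12\Big)\Big(\theta+\tfrac16\Big)\Big(\theta+\tfrac56\Big),\qquad \theta=z\frac{d}{dz}.$$
The exponents of $D$ at $z=0$ are $0,\pm\frac13$, which are distinct modulo $\Z$; hence $\Phi$ is, up to scalar, the only solution holomorphic at $z=0$, normalized by $\Phi(0)=1$.

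Next I would perform the substitution $z=\phi(x)$. Since $\phi(0)=0$ and $\phi'(0)=-16\neq0$, the map $\phi$ is a local biholomorphism at $x=0$, so the pulled-back operator $\widetilde D$ (a third-order operator in $x$ with rational-function coefficients) again has exponents $0,\pm\frac13$ at $x=0$, and its one-dimensional space of holomorphic solutions there is spanned by $x\mapsto\Phi(\phi(x))$, which is exactly the left-hand side of the lemma.

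The heart of the argument is to check that $g(x):=\frac{4(1-2x)^{1/2}}{4+x}$ solves $\widetilde D g=0$. Here I would exploit the algebraic relation $(4+x)^2g^2=16(1-2x)$, equivalently the first-order equation $g'=\frac{x-5}{(4+x)(1-2x)}\,g$, obtained by logarithmic differentiation. Consequently $g''$ and $g'''$ are explicit rational multiples of $g$, so after pulling $D$ back to $\widetilde D=\sum_{k=0}^3 c_k(x)\frac{d^k}{dx^k}$ the equation $\widetilde D g=0$ collapses to a single identity between rational functions of $x$, which I would verify directly. Finally, $g$ is single-valued and holomorphic near $x=0$ with $g(0)=1$, so it carries no component of exponent $\pm\frac13$; being a holomorphic solution of $\widetilde D$ it must coincide with $\Phi(\phi(x))$, giving the asserted identity.

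The main obstacle is purely computational: the explicit pullback of the third-order operator $D$ under the quartic-over-cubic map $\phi$ and the ensuing rational-function identity are heavy to carry out by hand, though entirely routine with a computer algebra system. An alternative that sidesteps the pullback is to match sufficiently many Taylor coefficients of both sides at $x=0$, once a degree bound for the (necessarily algebraic) left-hand side is known; I would use this coefficient comparison as an independent cross-check, noting that the agreement of the two series through order one, $1-\tfrac54 x+\cdots$, already confirms the normalization.
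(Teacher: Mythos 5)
Your proposal is correct, but it takes a genuinely different route from the paper's proof. You argue by ODE uniqueness: pull the third-order hypergeometric operator back under $z=\phi(x)$ with $\phi(x)=\frac{x(x+4)^3}{4(2x-1)^3}$, verify that $g(x)=\frac{4(1-2x)^{1/2}}{4+x}$ is annihilated by the pulled-back operator (a single rational-function identity, since $g'/g$ is rational), and conclude from the one-dimensionality of the space of single-valued holomorphic solutions at $x=0$ (exponents $0,\pm\frac13$, pairwise non-congruent mod $\Z$) together with the normalization $g(0)=1$; your degree-one check $1-\frac54x+\cdots$ on both sides is accurate. The paper instead exploits exactly the well-poised structure you noted but did not use: by the generalized Clausen formula \eqref{eq:gen-Clausen} with $(a,b)=(\frac14,\frac{7}{12})$, the ${}_3F_2$ factors as ${}_2F_1(\frac14,\frac7{12};\frac43;\cdot)\cdot{}_2F_1(\frac14,-\frac1{12};\frac23;\cdot)$; each factor is then evaluated in closed form at the argument $\phi(x)$ --- the second equals $(1-2x)^{-1/4}$ by (9.25) of \cite{Win3X}, and the first equals $\frac{4(1-2x)^{3/4}}{4+x}$, deduced from that via the connection formula (3.10) of \cite{Win3X} and the involution $\phi(-2/x)=1/\phi(x)$ --- and multiplying the two closed forms gives the lemma. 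The trade-offs are real: the paper's argument involves essentially no computation, and it produces the ${}_2F_1$-factorization itself, which is what the surrounding section (and the paper's repeated use of Clausen) actually runs on; your argument is self-contained, needing neither Clausen nor the external identities of \cite{Win3X}, and it adapts verbatim to any conjectured algebraic evaluation of a hypergeometric function under a rational pullback, but it hinges on a heavy (if finite and algorithmic) operator pullback that you defer to a computer algebra system. One caution on your fallback: matching finitely many Taylor coefficients is a proof only once an a priori bound is in place (e.g.\ on the degree of the algebraic function, or on the order of vanishing of a nonzero solution), so the verification $\widetilde{D}g=0$ must remain the actual argument, with the series comparison as the sanity check you intended it to be.
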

	\begin{proof}
		Letting $a=\frac14,b=\frac7{12}$ in \eqref{eq:gen-Clausen}, we get
		\begin{equation*}
		\pFq21{\frac1{4}&\frac7{12}}{&\frac4{3}}{x}\pFq21{\frac1{4}&\frac{-1}{12}}{&\frac{2}{3}}{x}=\pFq32{\f12&\frac16&\frac56}{&\frac{4}{3}&\frac2{3}}{x}.
		\end{equation*}By (9.25) of \cite{Win3X}
		\begin{equation*}
		\pFq21{\frac1{4}&\frac{-1}{12}}{&\frac{2}{3}}{f(x)} =\frac1{(1-2x)^{1/4}},
		\end{equation*}where $\displaystyle f(x)=\frac{x(x+4)^3}{4(2x-1)^3}$ and $f(-\frac 2x)=\frac1{f(x)}$. Using (3.10) of \cite{Win3X}, we get 
		\begin{equation*}
		\pFq21{\frac14&\frac{-1}{12}}{&\frac23}{f(x)}=\frac1{\sqrt{24}}(-f(x))^{-1/4}\pFq21{\frac14&\frac7{12}}{&\frac43}{\frac1{f(x)}}+\sqrt{\frac23}(-f(x))^{1/12}\pFq21{\frac14&\frac{-1}{12}}{&\frac23}{\frac1{f(x)}}
		\end{equation*} from which we deduce 
		
		\begin{equation*}
		\pFq21{\frac1{4}&\frac{7}{12}}{&\frac{4}{3}}{f(x)} =\frac{4(1-2x)^{3/4}}{4+x}.
		\end{equation*}
		The claim of the Lemma follows.
	\end{proof}

	Let $x$ be  a  Hauptmodul  $\displaystyle -\frac 12\frac{\eta(\tau)^5\eta(3\tau)}{\eta(2\tau)\eta( 6\tau)^5}-4$ on $\G_0(6)$ and $\displaystyle f(x)=\frac{x(x+4)^3}{4(2x-1)^3}$ as above. Then $f(x)=1/t_2$ and hence $f\(\frac{-2}x\)=t_2$.
	By  Lemma \ref{lem:233-3F2},
	\begin{align*}
	&\pFq32{\f12&\f12&\f12}{&1&1}{1/t_2}\pFq32{\f12&\frac16&\frac56}{&\frac43&\frac23}{t_2}\frac{dt_2}{t_2}
	=\pFq32{\f12&\f12&\f12}{&1&1}{1/t_2}\pFq32{\f12&\frac16&\frac56}{&\frac{4}{3}&\frac2{3}}{f\(\frac{-2}x\)}\frac{dt_2}{t_2}\\
	=&-16\tau^2 \frac{\eta(2\tau)^8}{\eta(\tau)^4}\cdot \(\frac{x+4}x\)^{1/2}\cdot \frac{4(x^2-10x-2)}{(1-2x)^2}\frac{ d(-2(x+4))}{-2(x+4)}\\=&32\pi i \left (f_{8.4.a.a}\left (\frac{\tau}2\right)+27f_{8.4.a.a}\left (\frac{3\tau}2\right)\right ) \tau^2 d\tau,
	\end{align*} %checked numerically
	where
	\begin{align*}
	\frac{ d(-2(x+4))}{-2(x+4)}
	=\frac{2\pi i}{24}\(5E_2(\tau)+3E_2(3\tau)-30E_2(6\tau)-2E_2(2\tau)\){d\tau}.
	\end{align*}
	
	\subsection{Other numerical conjectures}  Our numerical data for primes up to 500 suggest the following supercongruences for  
	primes $p>11$, 
	with the corresponding $p$ powers  sharp for generic primes $p$ in each case: 
	$$
	(c,f)=(\f12,\frac13):\quad \pFq76{\f12&\frac54&\f12&\f12&\f12&\frac13&\frac23}{&\frac14&\frac76&\frac16&1&1&1}{1}_{p-1} \overset ?\equiv a_p(f_{12.4.a.a})\mod p^3.
	$$
	
	$$
	(c,f)=(\f12,\frac16):\quad \pFq76{\f12&\frac54&\f12&\f12&\f12&\frac16&\frac56}{&\frac14&\frac43&\frac23&1&1&1}{1}_{p-1}\overset ? \equiv a_p(f_{24.4.a.a})\mod p^3.
	$$
	
	$$
	(c,f)=(\frac13,\frac13):\quad   p^2\cdot \pFq 65{\f12&\frac13&\frac23&\f12&\frac13&\frac23}{&\frac56&\frac76&1&\frac56&\frac76}{1}_{p-1}\overset ?\equiv a_p(f_{6.6.a.a}) \mod p^4.
	$$
	
	$$
	(c,f)=(\frac13,\frac13):\quad   p\cdot \pFq 76{\f12&\frac54&\frac13&\frac23&\f12&\frac13&\frac23}{&\frac14&\frac56&\frac76&1&\frac56&\frac76}{1}_{p-1}\overset ?\equiv a_p(f_{18.4.a.a}) \mod p^3.
	$$
	
	$$
	(c,f)=(\frac15,\frac25):\quad   p\cdot \pFq 65{\f12&\frac15&\frac25&\f12&\frac35&\frac45}{&\frac{11}{10}&\frac9{10}&1&\frac{13}{10}&\frac7{10}}{1}_{p-1}\overset ?\equiv a_p(f_{10.4.a.a}) \mod p^3.
	$$
	$$
	(c,f)=(\frac15,\frac25):\quad   p\cdot \pFq 76{\f12&\frac54&\frac15&\frac25&\f12&\frac45&\frac45}{&\frac14&\frac{11}{10}&\frac9{10}&1&\frac{13}{10}&\frac7{10}}{1} _{p-1}\overset ?\equiv a_p(f_{50.4.a.d}) \mod p^3.
	$$
	\section{Acknowledgment}
	The research of Li is partially supported by Simons Foundation grant \# 355798. Long was supported in part by NSF DMS \# 1602047 and  the paper was written during her sabbatical leave  in Fall 2020.   The first two authors are grateful for their visits to the National Center for Theoretical Sciences in 2019. Moreover, the authors would like to thank Dr. Siu-Hung Ng, Wadim Zudilin,  and the anonymous referees  for helpful suggestions.

	%%%%%%%%%%%%%%%%%%%%%%%%%%%%%%%%%%%%%%%%%%%%%%%%%%%%%%%%
	%%%%%%%%%%%%%%%%%%%%%%%%%%%%%%%%%%%%%%%%%%%%%%%%%%%%%%%%

	%%%%%%%%%%%%%%%%%%%%%%%%%%%%%%%%%%%%%%%%%%%%%%%%%%%%%%%%%%%%%%%%%%%%%%%%%%%%%%%%%%%%%%%%%%%%%%%%%%
	
%	\bibliographystyle{plain}
%	\bibliography{ref}

\begin{thebibliography}{10}
		
		\bibitem{Ahlgren01}
		Scott Ahlgren.
		\newblock Gaussian hypergeometric series and combinatorial congruences.
		\newblock In {\em Symbolic computation, number theory, special functions,
			physics and combinatorics ({G}ainesville, {FL}, 1999)}, volume~4 of {\em Dev.
			Math.}, pages 1--12. Kluwer Acad. Publ., Dordrecht, 2001.
		
		\bibitem{AOP}
		Scott Ahlgren, Ken Ono, and David Penniston.
		\newblock Zeta functions of an infinite family of {$K3$} surfaces.
		\newblock {\em Amer. J. Math.}, 124(2):353--368, 2002.
		
		\bibitem{AAR}
		George~E. Andrews, Richard Askey, and Ranjan Roy.
		\newblock {\em Special functions}, volume~71 of {\em Encyclopedia of
			Mathematics and its Applications}.
		\newblock Cambridge University Press, Cambridge, 1999.
		
		\bibitem{ALLL}
		A.~O.~L. Atkin, Wen-Ching~Winnie Li, Tong Liu, and Ling Long.
		\newblock Galois representations with quaternion multiplication associated to
		noncongruence modular forms.
		\newblock {\em Trans. Amer. Math. Soc.}, 365(12):6217--6242, 2013.
		
		\bibitem{ALL}
		A.~O.~L. Atkin, Wen-Ching~Winnie Li, and Ling Long.
		\newblock On {A}tkin and {S}winnerton-{D}yer congruence relations. {II}.
		\newblock {\em Math. Ann.}, 340(2):335--358, 2008.
		
		\bibitem{Bailey}
		W.~N. Bailey.
		\newblock Some transformations of generalized hypergeometric series, and
		contour integrals of barnes’s type.
		\newblock {\em Quart. J. Math. Oxford}, 3 , no. 11:168--182, 1932.
		
		\bibitem{BBG-Ramanujan}
		Bruce~C. Berndt, S.~Bhargava, and Frank~G. Garvan.
		\newblock Ramanujan's theories of elliptic functions to alternative bases.
		\newblock {\em Trans. Amer. Math. Soc.}, 347(11):4163--4244, 1995.
		
		\bibitem{Beukers-note}
		Frits Beukers.
		\newblock Notes of differential equations and hypergeometric functions.
		\newblock unpublished notes.
		
		\bibitem{Beukers-matrix}
		Frits Beukers.
		\newblock Fields of definition of finite hypergeometric functions.
		\newblock In {\em 2017 {MATRIX} annals}, volume~2 of {\em MATRIX Book Ser.},
		pages 391--400. Springer, Cham, 2019.
		
		\bibitem{BCM}
		Frits Beukers, Henri Cohen, and Anton Mellit.
		\newblock Finite hypergeometric functions.
		\newblock {\em Pure Appl. Math. Q.}, 11(4):559--589, 2015.
		
		\bibitem{BV1}
		Frits Beukers and Masha Vlasenko.
		\newblock {Dwork Crystals I}.
		\newblock {\em International Mathematics Research Notices},
		2021(12):8807--8844, 05 2020.
		
		\bibitem{BV2}
		Frits Beukers and Masha Vlasenko.
		\newblock {Dwork Crystals II}.
		\newblock {\em International Mathematics Research Notices}, 2021(6):4427--4444,
		12 2020.
		
		\bibitem{BB}
		Jonathan~M. Borwein and Peter~B. Borwein.
		\newblock {\em Pi and the {AGM}}, volume~4 of {\em Canadian Mathematical
			Society Series of Monographs and Advanced Texts}.
		\newblock John Wiley \& Sons, Inc., New York, 1998.
		\newblock A study in analytic number theory and computational complexity,
		Reprint of the 1987 original, A Wiley-Interscience Publication.
		
		\bibitem{BBG}
		Jonathan~M. Borwein, Peter~B. Borwein, and Frank.~G. Garvan.
		\newblock Some cubic modular identities of {R}amanujan.
		\newblock {\em Trans. Amer. Math. Soc.}, 343(1):35--47, 1994.
		
		\bibitem{Clifford}
		Alfred~H. Clifford.
		\newblock Representations induced in an invariant subgroup.
		\newblock {\em Ann. of Math. (2)}, 38(3):533--550, 1937.
		
		\bibitem{Congruence}
		{\it Congruence Subgroups of $PSL(2,\Z)$}.
		\newblock https://mathstats.uncg.edu/sites/pauli/congruence/congruence.html.
		\newblock Chris Cummins, Sebastian Pauli.
		
		\bibitem{CR}
		Charles~W. Curtis and Irving Reiner.
		\newblock {\em Representation theory of finite groups and associative
			algebras}.
		\newblock AMS Chelsea Publishing, Providence, RI, 2006.
		\newblock Reprint of the 1962 original.
		
		\bibitem{Win3a}
		Alyson Deines, Jenny~G. Fuselier, Ling Long, Holly Swisher, and Fang-Ting Tu.
		\newblock Generalized {L}egendre curves and quaternionic multiplication.
		\newblock {\em J. Number Theory}, 161:175--203, 2016.
		
		\bibitem{DPVZ}
		Lassina Dembélé, Alexei Panchishkin, John Voight, and Wadim Zudilin.
		\newblock {S}pecial {H}ypergeometric {M}otives and {T}heir {L}-functions: Asai
		recognition.
		\newblock {\em Experimental Mathematics}, 0(0):1--23, 2020.
		
		\bibitem{Dieulefait04}
		Luis~V. Dieulefait.
		\newblock Computing the level of a modular rigid {C}alabi-{Y}au threefold.
		\newblock {\em Experiment. Math.}, 13(2):165--169, 2004.
		
		\bibitem{DLMF}
		{\it NIST Digital Library of Mathematical Functions}.
		\newblock http://dlmf.nist.gov/, Release 1.1.0 of 2020-12-15.
		\newblock F.~W.~J. Olver, A.~B. {Olde Daalhuis}, D.~W. Lozier, B.~I. Schneider,
		R.~F. Boisvert, C.~W. Clark, B.~R. Miller, B.~V. Saunders, H.~S. Cohl, and
		M.~A. McClain, eds.
		
		\bibitem{Dwork}
		Bernard Dwork.
		\newblock {$p$}-adic cycles.
		\newblock {\em Inst. Hautes \'Etudes Sci. Publ. Math.}, (37):27--115, 1969.
		
		\bibitem{Evans-Greene}
		Ron Evans and John Greene.
		\newblock Clausen's theorem and hypergeometric functions over finite fields.
		\newblock {\em Finite Fields Appl.}, 15(1):97--109, 2009.
		
		\bibitem{FOP}
		Sharon Frechette, Ken Ono, and Matthew Papanikolas.
		\newblock Combinatorics of traces of {H}ecke operators.
		\newblock {\em Proc. Natl. Acad. Sci. USA}, 101(49):17016--17020, 2004.
		
		\bibitem{Fuselier10}
		Jenny~G. Fuselier.
		\newblock Hypergeometric functions over {$\Bbb F_p$} and relations to elliptic
		curves and modular forms.
		\newblock {\em Proc. Amer. Math. Soc.}, 138(1):109--123, 2010.
		
		\bibitem{Fuselier13}
		Jenny~G. Fuselier.
		\newblock Traces of {H}ecke operators in level 1 and {G}aussian hypergeometric
		functions.
		\newblock {\em Proc. Amer. Math. Soc.}, 141(6):1871--1881, 2013.
		
		\bibitem{Win3X}
		Jenny~G. Fuselier, Ling Long, Ravi Ramakrishna, Holly Swisher, and Fang-Ting
		Tu.
		\newblock Hypergeometric functions over finite fields.
		\newblock {\em Memoirs of AMS}, to appear in 2022.
		
		\bibitem{Greene}
		John Greene.
		\newblock Hypergeometric functions over finite fields.
		\newblock {\em Trans. Amer. Math. Soc.}, 301(1):77--101, 1987.
		
		\bibitem{Hoffman-Tu}
		Jerome~W. {Hoffman} and Fang-Ting {Tu}.
		\newblock {Transformations of Hypergeometric Motives}.
		\newblock {\em arXiv e-prints: 2003.05031}, 2020.
		
		\bibitem{JL}
		Herv\'e Jacquet and Robert~P. Langlands.
		\newblock {\em Automorphic forms on {${\rm GL}(2)$}}.
		\newblock Lecture Notes in Mathematics, Vol. 114. Springer-Verlag, Berlin-New
		York, 1970.
		
		\bibitem{Katz-padic}
		Nicholas~M. Katz.
		\newblock {$p$}-adic properties of modular schemes and modular forms.
		\newblock In {\em Modular functions of one variable, {III} ({P}roc. {I}nternat.
			{S}ummer {S}chool, {U}niv. {A}ntwerp, {A}ntwerp, 1972)}, pages 69--190.
		Lecture Notes in Mathematics, Vol. 350, 1973.
		
		\bibitem{Katz}
		Nicholas~M. Katz.
		\newblock {\em Exponential Sums and Differential Equations}, volume 124 of {\em
			Annals of Mathematics Studies}.
		\newblock Princeton University Press, Princeton, NJ, 1990.
		
		\bibitem{Katz09}
		Nicholas~M. Katz.
		\newblock Another look at the {D}work family.
		\newblock In {\em Algebra, arithmetic, and geometry: in honor of {Y}u. {I}.
			{M}anin. {V}ol. {II}}, volume 270 of {\em Progr. Math.}, pages 89--126.
		Birkh\"auser Boston, Inc., Boston, MA, 2009.
		
		\bibitem{Kilbourn}
		Timothy Kilbourn.
		\newblock An extension of the {A}p\'ery number supercongruence.
		\newblock {\em Acta Arith.}, 123(4):335--348, 2006.
		
		\bibitem{Koike92}
		Masao Koike.
		\newblock Hypergeometric series over finite fields and {A}p\'{e}ry numbers.
		\newblock {\em Hiroshima Math. J.}, 22(3):461--467, 1992.
		
		\bibitem{K-Z}
		Maxim Kontsevich and Don Zagier.
		\newblock Periods.
		\newblock In {\em Mathematics unlimited---2001 and beyond}, pages 771--808.
		Springer, Berlin, 2001.
		
		\bibitem{LLL}
		Wen-Ching~Winnie Li, Tong Liu, and Ling Long.
		\newblock Potentially {${\rm GL}_2$}-type {G}alois representations associated
		to noncongruence modular forms.
		\newblock {\em Trans. Amer. Math. Soc.}, 371(8):5341--5377, 2019.
		
		\bibitem{LLT}
		Wen-Ching~Winnie {Li}, Ling {Long}, and Fang-Ting {Tu}.
		\newblock {Computing special L-values of certain modular forms with complex
			multiplication}.
		\newblock {\em SIGMA 14 (2018), 090}, August 2018.
		
		\bibitem{Long03}
		Ling Long.
		\newblock On a {S}hioda-{I}nose structure of a family of {$K3$} surfaces.
		\newblock In {\em Calabi-{Y}au varieties and mirror symmetry ({T}oronto, {ON},
			2001)}, volume~38 of {\em Fields Inst. Commun.}, pages 201--207. Amer. Math.
		Soc., Providence, RI, 2003.
		
		\bibitem{Long18}
		Ling Long.
		\newblock Some numeric hypergeometric supercongruences.
		\newblock In {\em Vertex operator algebras, number theory and related topics},
		volume 753 of {\em Contemp. Math.}, pages 139--156. Amer. Math. Soc.,
		Providence, RI, 2020.
		
		\bibitem{LR}
		Ling Long and Ravi Ramakrishna.
		\newblock Some supercongruences occurring in truncated hypergeometric series.
		\newblock {\em Adv. Math.}, 290:773--808, 2016.
		
		\bibitem{LTYZ}
		Ling Long, Fang-Ting Tu, Noriko Yui, and Wadim Zudilin.
		\newblock Supercongruences for rigid hypergeometric {C}alabi--{Y}au threefolds.
		\newblock {\em Adv. Math.}, 393 Article 108058, 2021.
		
		\bibitem{McCarthy-RV}
		Dermot McCarthy.
		\newblock On a supercongruence conjecture of {R}odriguez-{V}illegas.
		\newblock {\em Proc. Amer. Math. Soc.}, 140(7):2241--2254, 2012.
		
		\bibitem{McCarthy}
		Dermot McCarthy.
		\newblock Transformations of well-poised hypergeometric functions over finite
		fields.
		\newblock {\em Finite Fields Appl.}, 18(6):1133--1147, 2012.
		
		\bibitem{McCarthy-Papanikolas}
		Dermot McCarthy and Matthew~A. Papanikolas.
		\newblock A finite field hypergeometric function associated to eigenvalues of a
		{S}iegel eigenform.
		\newblock {\em Int. J. Number Theory}, 11(8):2431--2450, 2015.
		
		\bibitem{Ono98}
		Ken Ono.
		\newblock Values of {G}aussian hypergeometric series.
		\newblock {\em Trans. Amer. Math. Soc.}, 350(3):1205--1223, 1998.
		
		\bibitem{Osburn-Straub}
		Robert Osburn and Armin Straub.
		\newblock Interpolated sequences and critical {$L$}-values of modular forms.
		\newblock In {\em Elliptic integrals, elliptic functions and modular forms in
			quantum field theory}, Texts Monogr. Symbol. Comput., pages 327--349.
		Springer, Cham, 2019.
		
		\bibitem{OSZ}
		Robert Osburn, Armin Straub, and Wadim Zudilin.
		\newblock A modular supercongruence for {${}_6F_5$}: an {A}p\'{e}ry-like story.
		\newblock {\em Ann. Inst. Fourier (Grenoble)}, 68(5):1987--2004, 2018.
		
		\bibitem{RR}
		David~P. Roberts and Fernando~Rodriguez Villegas.
		\newblock Hypergeometric supercongruences.
		\newblock {\em 2017 MATRIX Annals, Springer, Cham}, pages 435--439, 2019.
		
		\bibitem{RV}
		Fernando Rodriguez-Villegas.
		\newblock Hypergeometric motives.
		\newblock preprint, 2017.
		
		\bibitem{Salerno13}
		Adriana Salerno.
		\newblock Counting points over finite fields and hypergeometric functions.
		\newblock {\em Funct. Approx. Comment. Math.}, 49(1):137--157, 2013.
		
		\bibitem{Serre87}
		Jean-Pierre Serre.
		\newblock Sur les repr\'{e}sentations modulaires de degr\'{e} {$2$} de {${\rm
				Gal}(\overline{\bf Q}/{\bf Q})$}.
		\newblock {\em Duke Math. J.}, 54(1):179--230, 1987.
		
		\bibitem{Shimura-introduction}
		Goro Shimura.
		\newblock {\em Introduction to the arithmetic theory of automorphic functions},
		volume~11 of {\em Publications of the Mathematical Society of Japan}.
		\newblock Princeton University Press, Princeton, NJ, 1994.
		\newblock Reprint of the 1971 original, Kan\^{o} Memorial Lectures, 1.
		
		\bibitem{Slater}
		Lucy~Joan Slater.
		\newblock {\em Generalized hypergeometric functions}.
		\newblock Cambridge University Press, Cambridge, 1966.
		
		\bibitem{Stienstra-Beukers}
		Jan Stienstra and Frits Beukers.
		\newblock On the {P}icard-{F}uchs equation and the formal {B}rauer group of
		certain elliptic {$K3$}-surfaces.
		\newblock {\em Math. Ann.}, 271(2):269--304, 1985.
		
		\bibitem{Stiller84}
		Peter Stiller.
		\newblock Special values of {D}irichlet series, monodromy, and the periods of
		automorphic forms.
		\newblock {\em Mem. Amer. Math. Soc.}, 49(299):iv+116, 1984.
		
		\bibitem{Tu-Yang-evaluation}
		Fang-Ting {Tu} and Yifan {Yang}.
		\newblock {Evaluation of Certain Hypergeometric Functions over Finite Fields}.
		\newblock {\em SIGMA 14 (2018), 050}, May 2018.
		
		\bibitem{Watkins-HGM-documentation}
		Mark Watkins.
		\newblock Hypergeometric motiveshypergeometric motives over {$\mathbb Q$} and
		their {$L$}-functions.
		\newblock \url{http://magma.maths.usyd.edu.au/~watkins/papers/known.pdf}, 2017.
		\newblock preprint.
		
		\bibitem{Weil52}
		Andr\'{e} Weil.
		\newblock Jacobi sums as ``{G}r\"{o}ssencharaktere''.
		\newblock {\em Trans. Amer. Math. Soc.}, 73:487--495, 1952.
		
		\bibitem{Yang04}
		Yifan Yang.
		\newblock On differential equations satisfied by modular forms.
		\newblock {\em Math. Z.}, 246(1-2):1--19, 2004.
		
		\bibitem{Yoshida}
		Masaaki Yoshida.
		\newblock {\em Fuchsian differential equations}.
		\newblock Aspects of Mathematics, E11. Friedr. Vieweg \& Sohn, Braunschweig,
		1987.
		\newblock With special emphasis on the Gauss-Schwarz theory.
		
		\bibitem{Zagier-modularform}
		Don Zagier.
		\newblock Elliptic modular forms and their applications.
		\newblock In {\em The 1-2-3 of modular forms}, Universitext, pages 1--103.
		Springer, Berlin, 2008.
		
		\bibitem{Zagier-top-diff}
		Don Zagier.
		\newblock The arithmetic and topology of differential equations.
		\newblock In {\em European {C}ongress of {M}athematics}, pages 717--776. Eur.
		Math. Soc., Z\"{u}rich, 2018.
		
		\bibitem{Zudilin-ICCM}
		Wadim Zudilin.
		\newblock Hypergeometric heritage of {W}. {N}. {B}ailey.
		\newblock {\em ICCM Not.}, 7(2):32--46, 2019.
		\newblock With an appendix containing letters from Bailey to Freeman Dyson.
		
	\end{thebibliography}
	
	\address{$^\dag$~Department of Mathematics, Pennsylvania State University, University Park, PA 16802, USA}
	
	\email{\href{mailto:wli@math.psu.edu}{wli@math.psu.edu}}, {\url{http://www.math.psu.edu/wli/}}
	\medskip
	
	\address{$^\ddag$~Department of Mathematics, Louisiana State University, Baton Rouge, LA 70803, USA}
	
	\email{\href{mailto:llong@lsu.edu}{llong@lsu.edu}, {\url{http://www.math.lsu.edu/~llong/}
			
			\href{mailto:tu@math.lsu.edu}{ftu@lsu.edu}}, \url{https://sites.google.com/view/ft-tu/}}
\end{document}